\documentclass{elsarticle}

\usepackage{lineno,times}
\modulolinenumbers[5]

\usepackage{hyperref}
\usepackage{amsmath, amsthm, amscd, amsfonts, amssymb, graphicx, color}
\newtheorem{theorem}{Theorem}[section]
\newtheorem{main}{Main Theorem}
\newtheorem{lemma}[theorem]{Lemma}
\newtheorem{assumption}{Assumption}

\theoremstyle{definition}
\newtheorem{definition}[theorem]{Definition}

\newtheorem{remark}[theorem]{Remark}
\numberwithin{equation}{section}
\usepackage{latexsym}
\usepackage{amsmath}
\usepackage{amssymb}
\allowdisplaybreaks[4]

 \usepackage[text={135mm,210mm},centering]{geometry}


\newcommand{\be}{\begin{equation}\begin{aligned}}
\newcommand{\ee}{\end{aligned}\end{equation}}
\newcommand{\ben}{\begin{equation}\nonumber\begin{aligned}}
\newcommand{\dist}{{\rm dist}}
\newcommand{\R}{\mathbb{R}}
\newcommand{\N}{\mathbb{N}}
\newcommand{\D}{\mathcal{D}}
\newcommand{\A}{\mathcal{A}}
\newcommand{\B}{\mathfrak{B}}

\newcommand{\TT}{\mathbb{T}}
\newcommand{\LL}{\mathbb{L}_{per}}

\renewcommand{\d}{{\rm{d}}}

    \usepackage{mathabx}

\begin{document}

\begin{frontmatter}

\title{   Regularity of random attractor and fractal dimension  of  fractional stochastic Navier-Stokes equations  on three-dimensional torus}

\author[lh]{Hui Liu}

\ead{ss\_liuhui@ujn.edu.cn}

\author[scf]{Chengfeng Sun\corref{mycorrespondingauthor}}
\cortext[mycorrespondingauthor]{Corresponding author}
\ead{sch200130@163.com}

\author[xj]{Jie Xin}

\ead{fdxinjie@sina.com}

\address[lh]{School of Mathematical Sciences, University of Jinan, Jinan 250022, PR China}

\address[scf]{School of Applied Mathematics, Nanjing University of Finance and Economics, Nanjing 210023, PR China}
\address[xj]{School of Information Engineering, Shandong Youth University of Political Science, Jinan 250103, PR China}

\begin{abstract}

 In this paper we will  study the asymptotic dynamics of  fractional   Navier-Stokes (NS) equations  with  additive white noise  on three-dimensional torus $\mathbb T^3$.  Under the conditions that the external forces $f(x)$ belong to the phase space $ H$ and the noise intensity function $h(x)$  satisfies $\|\nabla h\|_{L^\infty} < \sqrt \pi  \nu \lambda_1^\frac{5}{4}$,
 where  $ \nu $ is the kinematic viscosity of the fluid and $\lambda_1$ is  the first eigenvalue of the Stokes operator,  we shown that the random fractional three-dimensional NS equations possess a tempered $(H,H^\frac{5}{2})$-random attractor  whose fractal dimension in $H^\frac{5}{2}$ is finite. This was proved by  establishing, first, an $H^\frac{5}{2}$ bounded   absorbing set and, second, a local $(H,H^\frac{5}{2})$-Lipschitz continuity  in initial values  from which    the $(H,H^\frac{5}{2})$-asymptotic compactness  of the system follows. Since the forces $f$ belong only to $H$, the $H^\frac{5}{2}$ bounded absorbing set was constructed by an indirect approach of estimating the $H^\frac{5}{2}$-distance     between the solutions  of the random fractional three-dimensional NS equations and that of the corresponding deterministic  equations.  Furthermore, under the conditions that the external forces $f(x)$ belong to the $ H^{k-\frac{5}{4}}$ and the noise intensity function $h(x)$  belong to $H^{k+\frac{5}{4}}$ for $k\geq\frac{5}{2}$, we shown that the random fractional three-dimensional NS equations possess a tempered $(H,H^k)$-random attractor  whose fractal dimension in $H^k$ is finite. This was proved by using iterative methods and establishing, first, an $H^k$ bounded   absorbing set and, second, a local $(H,H^k)$-Lipschitz continuity  in initial values  from which    the $(H,H^k)$-asymptotic compactness  of the system follows.
  \end{abstract}

\begin{keyword}
\texttt{Navier-Stokes equations;
   $H^k$ regularity; $H^k$ absorbing set; Local Lipschitz continuity; Fractal dimension.}
\MSC[2020] 35B40, 35B41, 60H15
\end{keyword}
\end{frontmatter}

\tableofcontents

\section{Introduction}

In this  paper we introduce the following fractional stochastic three-dimensional Navier-Stokes (NS) equations on  $\mathbb{T}^3=[0,L]^3$, $L>0$:
 \begin{align} \label{1}
 \left \{
  \begin{aligned}
 &  \d u_h+ \left ( \nu (-\Delta)^\alpha u_h+(u_h\cdot\nabla)u_h+\nabla p\right) \d t=  f(x) \d t+h(x)\d \omega,\\
  & \nabla\cdot u_h=0,\\
  & u_h(0)=u_{h,0},
  \end{aligned} \right.
\end{align}
 endowed with periodic boundary conditions, where $u_h$ denotes the fluid velocity, $ \nu >0$ denotes the kinematic viscosity, $p$ denotes the scalar pressure and $f:=f(x) \in H $  is  the fluid forces, where $H= \left \{u\in \LL ^2:
  \int_{\mathbb T^3}u  \d x=0,\,  {\rm div} u=0 \right \}$ is the phase space of the system. In Section \ref{sec7}, we set the forces $f\in H^{k-\frac{5}{4}}$ for $k\geq\frac{5}{2}$. $\omega(t)$ is a two-sided real valued Wiener process on a probability space $(\Omega,\mathcal{F},\mathbb{P})$ with $\Omega=\{\omega\in C(\mathbb{R},\mathbb{R}):\omega(0)=0\}$, $\mathcal{F}$ is the Borel $\sigma$-algebra induced by the compact open topology of $\Omega$, and $\mathbb{P}$ is the corresponding Wiener measure on $(\Omega,\mathcal{F})$. The subscript $``_{h}"$ represents the dependence on the noise intensity $h(x)$.
When $h\equiv0$, the equations \eqref{1} reduce to the fractional three-dimensional Navier-Stokes equations  on  $\mathbb{T}^3$:
 \begin{align}\label{2}
 \left \{
  \begin{aligned}&
   \frac{\d u}{\d t}+\nu (-\Delta)^\alpha u+(u\cdot\nabla)u+\nabla p=f(x),\\
  & \nabla\cdot u=0,\\
   &u(0)=u_{0}.
  \end{aligned} \right.
\end{align}
Global well-posedness of solutions of the fractional three-dimensional NS equations has been investigated by many authors for $\alpha\geq\frac54$, see, e.g., \cite{wu2003,wu2008,zhou2007}, here $\alpha=\frac54$ is critical value. In this paper, for the sake of simplicity, we only consider the critical value $\alpha=\frac54$. For $\alpha>\frac{5}{4}$, by using the similar method, we get the following corresponding to the main results.

The  NS equations are important mathematical models in \cite{foias,temam}. The global attractors of the autonomous deterministic two-dimensional NS equations have been studied, see, e.g., \cite{constantin,ju,liuvx,liuvx1,rosa,sun,wang,zhao};
while \cite{cui24siads,cui24ma,julia,hou,langa,lu,lukaszewicz} proved the asymptotic dynamics of the non-autonomous deterministic two-dimensional NS equations. Meanwhile, the asymptotic dynamics of the stochastic two-dimensional  NS equations have been studied, see, e.g., \cite{Brzezniak13,Brzezniak15,Brzezniak2015,Brzezniak2018,Brzezniak,crauel97jdde,crauel,liJDE,li,mohammed,wang2023}. Well-posedness and asymptotic dynamics of the stochastic three-dimensional modified Navier-Stokes equations have been studied, see, e.g., \cite{Caraballo,gaoJDE,HanJDE,Kinra,liu2018,Rockner}.

The fractional fluid equations are also important mathematical models in \cite{foias,temam}. The asymptotic dynamics of the  fractional three-dimensional NS equations have been studied, see, e.g., \cite{gal,kostianko,li2020,wangsiam}. Meanwhile, \cite{Anh,Tinh} proved the asymptotic dynamics of the other fractional  fluid equations. Well-posdeness and dynamics of the stochastic fractional fluid equations have been widely studied, see, e.g. \cite{ali,debbi,gess,li2021,li2023,liusd,rockner,zhang,zhu}.

In \cite{liu}, if $f\in H$, the global attractor $\mathcal{A}_0$ of the deterministic fractional three-dimensional NS equations is bounded in $H^{\frac{5}{2}}$. When $f\in H^{k-\frac{5}{2}}$ for $k>\frac{5}{2}$, the absorbing ball of the deterministic fractional three-dimensional NS equations is bounded in $H^{k}$, see , e.g. \cite{liu2023}. Inspired by the Proposition 12.4 in \cite{robinson01} and \cite{liu2023,liu}, we  introduce the following main results.
\begin{lemma}\label{lemma1.1}
Let $f\in H$, then the deterministic fractional three-dimensional NS equations \eqref{2} generate a semigroup $S$ which possesses a global attractor $\mathcal{A}_0$ in $H$ and has an absorbing set which is bounded in $H^{\frac52}$. And then, the global attractor $\mathcal{A}_0$ is bounded in $H^{\frac52}$.
\end{lemma}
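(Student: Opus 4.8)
The plan is to prove the three assertions in turn: first that \eqref{2} generates a semigroup $S$ on $H$, second that $S$ admits a global attractor $\mathcal{A}_0$ in $H$ together with an absorbing set bounded in $H^{\frac52}$, and third that invariance forces $\mathcal{A}_0$ itself to be bounded in $H^{\frac52}$. The third step is a soft functional-analytic argument in the spirit of Proposition 12.4 of \cite{robinson01}, so all of the analytic content sits in the construction of the two absorbing sets.

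For the semigroup and the $H$-attractor I would argue along classical lines. Since $\alpha=\frac54$ is the critical exponent, global well-posedness of \eqref{2} for $u_0\in H$ is available (\cite{wu2003,wu2008,zhou2007}), so the solution map $S(t)\colon H\to H$ is a continuous semigroup. Testing \eqref{2} with $u$ and using the cancellation $((u\cdot\nabla)u,u)=0$ for divergence-free periodic fields gives
\be
\frac{1}{2}\frac{\d}{\d t}\|u\|^2+\nu\|(-\Delta)^{\frac{\alpha}{2}}u\|^2=(f,u),
\ee
and then the Poincar\'e inequality $\|(-\Delta)^{\frac{\alpha}{2}}u\|^2\ge\lambda_1^{\alpha}\|u\|^2$ with Young's inequality and Gronwall produces an $H$-bounded absorbing set. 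Because $H^{\alpha}=H^{\frac54}\hookrightarrow H$ compactly on $\mathbb T^3$, the semigroup is asymptotically compact, and the standard existence theorem yields the global attractor $\mathcal{A}_0$ in $H$.

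The heart of the lemma is the $H^{\frac52}$-bounded absorbing set, and this is exactly where $f\in H$ only is restrictive. A direct estimate, pairing \eqref{2} with $(-\Delta)^{2\alpha}u$ to control $\|(-\Delta)^{\alpha}u\|^2=\|u\|_{H^{\frac52}}^2$, fails: the forcing would enter as $(f,(-\Delta)^{2\alpha}u)\le\|f\|\,\|u\|_{H^{5}}$, which cannot be absorbed into the available dissipation $\nu\|(-\Delta)^{\frac{3\alpha}{2}}u\|^2=\nu\|u\|_{H^{\frac{15}{4}}}^2$ when $f$ lies only in $H$. I would therefore use the indirect route of \cite{liu,liu2023}. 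First obtain the intermediate $H^{\frac54}$-absorbing set by pairing with $(-\Delta)^{\alpha}u$: here the forcing is $(f,(-\Delta)^{\alpha}u)\le\|f\|\,\|u\|_{H^{\frac52}}$, precisely matched by the dissipation $\nu\|(-\Delta)^{\alpha}u\|^2=\nu\|u\|_{H^{\frac52}}^2$ and absorbable by Young's inequality (the critical nonlinear term $((u\cdot\nabla)u,(-\Delta)^{\alpha}u)$ being handled by the critical Sobolev estimates underlying well-posedness at $\alpha=\frac54$). Once the trajectory is bounded in $V=H^{\frac54}$, the nonlinearity lands in $H$: by the 3D Sobolev embeddings $H^{\frac54}\hookrightarrow L^{12}$ and $\nabla u\in H^{\frac14}\hookrightarrow L^{12/5}$, H\"older gives $(u\cdot\nabla)u\in L^2=H$. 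A uniform Gronwall argument on the time-differentiated equation then bounds $u_t$ in $H$ after an initial transient, and reading \eqref{2} as $\nu(-\Delta)^{\alpha}u=f-u_t-(u\cdot\nabla)u$ places $(-\Delta)^{\alpha}u$ in $H$ uniformly for large time, i.e. $u$ in $H^{2\alpha}=H^{\frac52}$. This is exactly the $H^{\frac52}$-bounded absorbing set.

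Finally, with an absorbing set $\mathcal{B}$ bounded in $H^{\frac52}$ in hand, the boundedness of $\mathcal{A}_0$ in $H^{\frac52}$ is immediate: $\mathcal{B}$ absorbs $\mathcal{A}_0$, so $S(t)\mathcal{A}_0\subset\mathcal{B}$ for $t\ge t_0$, and the invariance $\mathcal{A}_0=S(t)\mathcal{A}_0$ forces $\mathcal{A}_0\subset\mathcal{B}$, hence $\mathcal{A}_0$ is bounded in $H^{\frac52}$. I expect the single genuine obstacle to be the construction of the $H^{\frac52}$-absorbing set: since $f$ lies only in $H$, the regularity cannot be reached by one energy identity, and the delicate point is to control the nonlinear term in the intermediate space $H^{\frac54}$ and then trade the time-regularity of $u_t$ for spatial regularity through the equation, uniformly for large time. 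The remaining steps are routine.
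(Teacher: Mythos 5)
Your proposal is correct and follows exactly the route the paper relies on: the paper does not prove Lemma \ref{lemma1.1} in detail but cites \cite{liu2023,liu,robinson01} and explains in Remark \ref{rmk1} that the $H^{\frac52}$ bound is obtained ``by estimating the time-derivatives $\partial_t u$ of solutions rather than estimating the solutions $u$ themselves,'' i.e.\ precisely your scheme of an intermediate $H^{\frac54}$ absorbing set, a uniform bound on $u_t$ in $H$, and then reading $\nu A^{\frac54}u = f - u_t - B(u)$ to trade time regularity for spatial regularity, followed by the invariance argument of Proposition 12.4 in \cite{robinson01}. No discrepancies with the paper's (cited) argument.
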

By the Lemma \ref{lemma1.1} and $f\in H$, we have the global attractor $\mathcal{A}_0$ of the deterministic fractional three-dimensional NS equations \eqref{2} is a global $(H,H^\frac54)$-attractor. In other words, it is a compact set in $H^{\frac54}$ and attracts a bounded set in $H$ under the norm of $H^{\frac54}$, see , e.g. \cite{liu2023,liu,robinson01}. Moreover, it means that any weak solutions of the deterministic fractional three-dimensional NS equations \eqref{2} are attracted by these strong solutions in $H^{\frac54}$. By the $H^{\frac{5}{2}}$-boundedness of $\mathcal{A}_0$ implies that the strong solutions in  $\mathcal{A}_0$ are also smooth solutions, see , e.g. \cite{Ladyzenskaja,liu,robinson01,temam1,temam}. But the global attractor $\mathcal{A}_0$ is not shown to be $(H,H^\frac52)$.

\begin{lemma}\label{lemma1.2}
Let $f\in H^{k-\frac{5}{2}}$ for $k>\frac{5}{2}$, then the deterministic fractional three-dimensional NS equations \eqref{2} generate a semigroup $S$ which possesses a global attractor $\mathcal{A}_0$ in $H$ and has an absorbing set which is bounded in $H^{k}$. And then, the global attractor $\mathcal{A}_0$ is bounded in $H^{k}$.
\end{lemma}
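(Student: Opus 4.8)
The plan is to obtain the $H^k$-bound of the absorbing set by an iterative regularity bootstrap anchored at the $H^{5/2}$-estimate of Lemma~\ref{lemma1.1}. Since $k\ge\frac52$ we have $f\in H^{k-5/2}\subset H$, so Lemma~\ref{lemma1.1} already furnishes the global attractor $\mathcal A_0$ in $H$ together with an absorbing set bounded in $H^{5/2}$; this is the base case, and it also supplies the semigroup $S$ asserted in the statement. From there I would raise the regularity in finitely many steps of fixed size, using the smoothing of the fractional Stokes semigroup $e^{-\nu t(-\Delta)^{5/4}}$ to turn a bound on the nonlinearity at one level into a bound on $u$ roughly $\frac32$ derivatives higher, while the forcing enters only through the stationary term $\nu^{-1}(-\Delta)^{-5/4}f$, which lies in $H^k$ by exactly the hypothesis $f\in H^{k-5/2}$.

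Concretely, after applying the Leray projection $\mathbb P$ to eliminate the pressure, I would write the solution from an initial time $t_0$ by the variation-of-constants formula
\be
u(t)={}&e^{-\nu(t-t_0)(-\Delta)^{5/4}}u(t_0)\\
&-\int_{t_0}^{t}e^{-\nu(t-\tau)(-\Delta)^{5/4}}\mathbb P\big[(u\cdot\nabla)u\big](\tau)\,\d\tau+\int_{t_0}^{t}e^{-\nu(t-\tau)(-\Delta)^{5/4}}f\,\d\tau,
\ee
and exploit the kernel bound $\|(-\Delta)^{\beta}e^{-\nu s(-\Delta)^{5/4}}\|\le C(\nu s)^{-4\beta/5}e^{-\nu\lambda_1^{5/4}s/2}$ on $H$, which is integrable in $s$ at the origin precisely while the regularity gain $2\beta$ stays below $\frac52$, and decays exponentially at infinity thanks to the spectral gap $\lambda_1$. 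Assuming inductively that the absorbing set (hence $\mathcal A_0$) is bounded in $H^{s}$ with $s>\frac32$, the divergence form $(u\cdot\nabla)u=\nabla\cdot(u\otimes u)$ and the Banach-algebra property of $H^{s}(\mathbb T^3)$ give $\|\mathbb P[(u\cdot\nabla)u]\|_{H^{s-1}}\le C\|u\|_{H^{s}}^{2}$. The Duhamel integral then lands in $H^{s+3/2-\varepsilon}$, the stationary forcing term lands in $H^{k}$, and the homogeneous term decays like $e^{-\nu\lambda_1^{5/4}(t-t_0)}$; consequently the absorbing set is bounded in $H^{\min(s+3/2-\varepsilon,\,k)}$ uniformly in $t_0$. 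After $\lceil(k-\tfrac52)/\tfrac32\rceil$ iterations the absorbing set is bounded in $H^{k}$, and since $\mathcal A_0$ is contained in it, $\mathcal A_0$ is bounded in $H^{k}$, which is the assertion. (Equivalently, one may iterate by testing \eqref{2} with $(-\Delta)^{s_j}u$, applying the uniform Gronwall lemma at each level, and then using the equation itself to invert the fractional Stokes operator, writing $\nu(-\Delta)^{5/4}u=f-u_t-\mathbb P[(u\cdot\nabla)u]$ and applying $(-\Delta)^{-5/4}$ to recover the full $\tfrac52$-derivative gain; this route matches the hypothesis $f\in H^{k-5/2}$ for the same reason.)

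The step I expect to be the main obstacle is the control of $(u\cdot\nabla)u$ at high regularity in three dimensions and its interplay with the integrability threshold of the smoothing kernel. This is precisely where the criticality $\alpha=\frac54$ is decisive: the dissipation $(-\Delta)^{5/4}$ is strong enough that at every level $s>\frac32$ the product estimate for $\mathbb P[(u\cdot\nabla)u]$ closes with a gain keeping $2\beta<\frac52$, so each Duhamel integral converges and the bootstrap does not stall. The remainder is bookkeeping: choosing the step size below $\frac52$, tracking the $\nu$- and $\lambda_1$-dependence of the constants so that the $H^{s}$-bounds are genuinely uniform in $t_0$ (absorbing rather than merely smoothing), and verifying that the induction hypothesis $s>\frac32$ propagates, which it does since one starts at $s_0=\frac52$.
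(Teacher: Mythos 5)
Your proof is sound, but it takes a genuinely different route from the one the paper relies on. Note first that the paper contains no proof of Lemma \ref{lemma1.2} at all: it recalls the result from \cite{liu2023}, modelled on Proposition 12.4 of \cite{robinson01}, and the method behind those citations --- which the paper reproduces for the random analogue in Section \ref{sec7} --- is an iterative \emph{energy} argument: test the equation with $A^{s_j}u$ at successive levels, estimate the nonlinearity by H\"older/Sobolev/Young, and close with Gronwall on shrinking time intervals, gaining $5/4$ derivatives per step; since the pairing $(f,A^{k}u)$ can only be split as $(A^{k/2-5/8}f,A^{k/2+5/8}u)$ against the available dissipation, a direct energy estimate needs $f\in H^{k-5/4}$, and the full gain down to $f\in H^{k-5/2}$ is recovered there by the additional time-derivative estimates of $\partial_t u$ indicated in Remark \ref{rmk1} (equivalently, by reading $\nu A^{5/4}u=f-\partial_t u-B(u)$ as an elliptic equation). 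You instead work with the mild formulation: the analytic-smoothing bound $\|A^{\beta}e^{-\nu sA^{5/4}}\|\leq C(\nu s)^{-4\beta/5}e^{-\nu\lambda_1^{5/4}s/2}$, valid precisely for gains $2\beta<5/2$, the algebra property of $H^{s}(\TT^3)$ for $s>3/2$, and the stationary term $\nu^{-1}A^{-5/4}f\in H^{k}$, which captures the full elliptic $5/2$-gain over $f$ in one stroke, so the Duhamel bootstrap advances $3/2-\varepsilon$ derivatives per step from the $H^{5/2}$ absorbing set of Lemma \ref{lemma1.1}. Both routes work. What yours buys is a self-contained argument in which the weak hypothesis $f\in H^{k-5/2}$ enters naturally, with no recourse to $\partial_t u$; what the paper's energy scheme buys is robustness --- it is exactly the argument that survives in the stochastic setting of Section \ref{sec7}, where neither time-derivative estimates nor a stationary inversion of $A^{5/4}$ are available (the Wiener process is not differentiable in time), at the price of the stronger hypothesis $f\in H^{k-5/4}$ of Assumption \ref{assum1}. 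Two small bookkeeping points for your write-up: state the step count with the $\varepsilon$-loss (harmless, since each step is capped at $\min(s+3/2-\varepsilon,k)$ and finitely many steps reach $k$), and invoke explicitly the boundedness of the Leray projection $P$ on $H^{s}$, which on the torus is immediate since $P$ is a Fourier multiplier.
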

By the Lemma \ref{lemma1.2} and $f\in H^{k-\frac{5}{2}}$ for $k>\frac{5}{2}$,  it is a compact set in $H^{k-\frac{5}{4}}$ and attracts a bounded set in $H$ under the norm of $H^{k-\frac{5}{4}}$, see, e.g. \cite{liu2023,robinson01}.  By the $H^{k}$-boundedness of $\mathcal{A}_0$ implies that the strong solutions in  $\mathcal{A}_0$ are also smooth solutions, see, e.g. \cite{Ladyzenskaja,liu,robinson01,temam1,temam}. But the global attractor $\mathcal{A}_0$ is not shown to be $(H,H^k)$.

Recently, the $(H,H^1)$-attractor of the deterministic two-dimensional NS equations has been extensively studied. Applying the asymptotic compactness and the enstrophy equation, the $(H,H^1)$-attractor of the deterministic two-dimensional NS equations was proved on some unbounded domains in \cite{ju}. And then, \cite{cui21jde} proved the smoothing and finite fractal dimensionality of $(H,H^1)$-uniform attractors.  \cite{zhao19} proved $(H,H^1)$-trajectory attractors of the deterministic three-dimensional modified NS equations and  \cite{song,song1} also proved $(H^1,H^2)$-global attractor and $(H^1,H^2)$-uniform attractor of the deterministic three-dimensional NS equations with damping. For the stochastic fluid equations, the $(H,H^1)$-random attractors were studied by many authors, see, e.g. \cite{li,wang2024,wang2023}. Moreover, \cite{cui2025} proved the finite fractal dimensionality of $(H,H^2)$-random attractors of the random two-dimensional NS equations.

In this paper, we consider the random fractional three-dimensional NS equations. When $f\in H$, the global attractor $\mathcal{A}_0$ of the deterministic fractional three-dimensional NS equations \eqref{2} is an $(H,H^\frac52)$-global attractor. We overcome the main difficult lies in the $H^\frac54$ boundedness of the time derivatives $u_t$ of solutions of \eqref{2}, see, e.g. \cite{liu,song}. In this paper, we will show that under certain conditions, the random dynamical system generated by \eqref{1} has a $(H,H^\frac52)$-random attractor and it has a finite fractal dimensional in $H^\frac52$. Since the estimating of $u_t$ can not apply to stochastic equations, in order to prove the $H^\frac{5}{2}$ bounded absorbing set, we will construct by an indirect approach of estimating the $H^\frac{5}{2}$-distance   between the solutions  of the random fractional three-dimensional NS equations \eqref{1} and that of the corresponding deterministic  equations \eqref{2}. Here, \cite{wang2024} obtained an $(H,H^1)$-random attractor of the random hydrodynamical equations on  two-dimensional torus, but we study the $(H,H^\frac52)$-random attractor of the random fractional  NS equations on  three-dimensional torus with the same condition $f\in H$. Moreover, when $f\in H^{k-\frac54}$ for $k\geq\frac{5}{2}$, we also study the $(H,H^k)$-random attractor of the random fractional three-dimensional  NS equations. 

When $f\in H^{k-\frac{5}{2}}$ for $k\geq\frac{5}{2}$, the global attractor $\mathcal{A}_0$ of the deterministic fractional three-dimensional NS equations \eqref{2} is an $(H,H^k)$-global attractor, see, e.g. \cite{liu2023,song}. In this paper, we will show that under certain conditions, the random dynamical system generated by \eqref{1} has a $(H,H^k)$-random attractor and it has a finite fractal dimensional in $H^k$.

\subsection*{Main results. Main techniques}

 In this subsection, we now introduce the main results and main techniques.
Firstly, following Crauel  \& Flandoli \cite{crauel}  we reformulated the stochastic fractional three-dimensional NS   equations \eqref{1}   via a scalar Ornstein-Uhlenbeck process as the following random equations
\begin{align}  \label{1.3}
\frac{\d v}{\d t}+\nu A^{\frac54}v+B\big( v(t)+hz(\theta_t\omega)\big)=f(x)-\nu A^{\frac54}hz(\theta_t\omega)+hz(\theta_t\omega),
\end{align}
where $A =-P\Delta$, $B(u)=P((u\cdot \nabla)u)$ with $P:\LL^2\to H$ the Helmholtz-Leray orthogonal projection, and   $z$ is a tempered random variable.  The random equations are known as a conjugate system to \eqref{1} and generate a random dynamical system (RDS) $\phi$ in $H$, see Section \ref{sec2.1} for more detail.
The intensity  function of the noise $h(x)$ is supposed to satisfy  the following assumptions.
\begin{assumption} \label{assum}
Let    $ h \in H^\frac{15}{4}$, and
\[
  \|\nabla h\|_{L^\infty} <  {\sqrt \pi} \nu \lambda_1^\frac{5}{4} ,
\]
where  $\lambda_1 = 4\pi^2 /L^2$ is the first eigenvalue of the Stokes operator $A$.
  \end{assumption}
\begin{assumption} \label{assum1}
For $k\geq\frac{5}{2}$, we set $h\in H^{k+\frac54}$ and $f\in H^{k-\frac54}$.
  \end{assumption}
In Section \ref{sec4}, we can get an $H^{\frac{5}{2}}$-random absorbing set.
\begin{main}[$H^{\frac{5}{2}}$ random absorbing set]
Let Assumption \ref{assum} hold and $f\in H$. The  RDS $\phi$ has a  random absorbing set $\mathfrak{B}_{H^\frac52}$, defined by  a random $H^\frac52$ neighborhood of the global attractor $\mathcal{A}_0$ of the deterministic fractional three-dimensional  NS equations \eqref{2}:
\begin{align}
\mathfrak{B}_{H^\frac52}(\omega)=\left \{v\in H^\frac52: \, {\rm dist}_{H^\frac52}(v,\mathcal{A}_{0}) \leq \sqrt{\rho(\omega)} \, \right\}, \quad \omega\in\Omega, \nonumber
\end{align}
where $\rho(\cdot)$ is the tempered random variable defined by \eqref{rho}.  Moreover, the $\mathcal{D}_H$-random attractor $\mathcal{A}$ of the system \eqref{2.2} is a bounded and tempered random set in $H^\frac52$.
\end{main}

We extend the result of the Lemma \ref{lemma1.1} to random cases. Since the standard techniques of the deterministic fractional three-dimensional NS equations in proving the $H^{\frac52}$ regularity can not apply to the random equations \eqref{1.3}, we overcome the main difficulty to prove the main result as in Section \ref{sec4}. Since the Winner process is not derivable in time, we can not get the estimation the time-derivative $u_t$ of solutions of \eqref{1.3}. The method of the Lemma \ref{lemma1.1} can not directly apply the random equations \eqref{1.3}. In order to get the estimation of $H^\frac52$-norm of \eqref{1.3}, we multiply the random equations \eqref{1.3} by $A^\frac52v$ and need $f\in H^\frac54$, but we only require $f\in H$. By introducing a comparison approach, we get estimation of the difference $w:=v-u$, here $v$ is the solution of random equations \eqref{1.3} and $u$ is the solution of deterministic  equations \eqref{2} corresponding to an initial value lying in the global attractor $\mathcal{A}_0$, respectively. By virtue of the Lemma \ref{lemma4.4}, we shown that $w$ is asymptotically bounded in $H^\frac52$.

In Section \ref{sec6}, we can get the main result as the following.
\begin{main}[$(H,H^\frac{5}{2})$-random attractor]
Let Assumption \ref{assum} hold and $f\in H$. The  random attractor $\mathcal A $ of the random fractional three-dimensional NS equations  \eqref{1.3} has  a an $(H,H^\frac52)$-random attractor  of finite fractal dimension in $H^\frac52$.
\end{main}
According to the standard bi-spatial random attractor in \cite{cui18jdde}, in order to get an $(H,H^\frac{5}{2})$-random attractor, we only show the $(H,H^\frac{5}{2})$-asymptotic compactness of the random equations \eqref{1.3}. By proving the local $(H, H)$-Lipschtiz continuity, local $(H, H^\frac54)$-Lipschtiz continuity and local $(H^\frac54, H^\frac52)$-Lipschtiz continuity, we prove the local $(H, H^\frac52)$-Lipschtiz continuity of solutions as the following, see the Theorem \ref{theorem5.6}.
\begin{main}[Local $(H, H^\frac52)$-Lipschtiz continuity]
 Let Assumption \ref{assum} hold and $f\in H$. For   any tempered set $\mathfrak D \in \D_H$,  then there
  exist  random variables $T_{{\mathfrak D}} (\cdot)  $   and $ L_{\mathfrak D}(\cdot )$ such that   two solutions $v_1$ and $v_2$ of random fractional three-dimensional NS equations \eqref{1.3} corresponding to initial values   $v_{1,0},$ $ v_{2,0}$ in $\mathfrak D \left (\theta_{-T_{\mathfrak D}(\omega)}\omega \right)$, respectively, satisfy
  \begin{align*}
  &
  \big\|A^\frac54(v_1 \!  \big (T_{\mathfrak D}(\omega),\theta_{-T_{\mathfrak D}(\omega)}\omega,v_{1,0}\big)
  -v_2 \big (T_{\mathfrak D}(\omega),\theta_{-T_{\mathfrak D}(\omega)}\omega,v_{2,0} \big) \big) \|^2 \\[0.8ex]
&\quad \leq  L_{\mathfrak D} (\omega)\|v_{1,0}-v_{2,0}\|^2,\quad \omega\in \Omega.
\end{align*}
\end{main}

In Section \ref{sec7}, we also can get the main result as the following.
\begin{main}[$(H,H^k)$-random attractor]
Let $k\geq\frac{5}{2}$. Let Assumption \ref{assum1} hold and $f\in H^{k-\frac{5}{4}}$.  The  random attractor $\mathcal A $ of the random fractional three-dimensional NS equations  \eqref{1.3} has  a an $(H,H^k)$-random attractor  of finite fractal dimension in $H^k$.
\end{main}
Let $k\geq\frac{5}{2}$. In order to get an $(H,H^k)$-random attractor, we only show the $(H,H^k)$-asymptotic compactness of the random equations \eqref{1.3}. By proving the local $(H^{k-\frac{5}{4}}, H^k)$-Lipschtiz continuity, we also prove the local $(H, H^k)$-Lipschtiz continuity of solutions as the following, see the Theorem \ref{theorem7.4}.

\begin{main}[Local $(H, H^k)$-Lipschtiz continuity]
 Let Assumption \ref{assum1} hold and $f\in H^{k-\frac{5}{4}}$ for $k\geq\frac{5}{2}$. For   any tempered set $\mathfrak D \in \D_H$,  then there
  exist  random variables $T^*_{{\mathfrak D}} (\cdot)  $   and $ L^*_{\mathfrak D}(\cdot )$ such that   two solutions $v_1$ and $v_2$ of random fractional three-dimensional NS equations \eqref{1.3} corresponding to initial values   $v_{1,0},$ $ v_{2,0}$ in $\mathfrak D \left (\theta_{-T^*_{\mathfrak D}(\omega)}\omega \right)$, respectively, satisfy
  \begin{align*}
  &
  \big\|A^\frac k2(v_1 \!  \big (T^*_{\mathfrak D}(\omega),\theta_{-T^*_{\mathfrak D}(\omega)}\omega,v_{1,0}\big)
  -v_2 \big (T^*_{\mathfrak D}(\omega),\theta_{-T^*_{\mathfrak D}(\omega)}\omega,v_{2,0} \big) \big) \|^2 \\[0.8ex]
& \leq  L^*_{\mathfrak D} (\omega)\|v_{1,0}-v_{2,0}\|^2,\quad \omega\in \Omega.
\end{align*}
\end{main}

\subsection*{Organization of the paper}

 In Section \ref{sec2}, we introduce the basic concepts and $(X,Y)$-random attractor theory and their fractal dimensions.  In Section \ref{sec3}, we first  introduce the  functional spaces and basic result of the deterministic fractional three-dimensional NS equations, and then associate an RDS to the random fractional three-dimensional NS equations.    We construct  $H^\frac54$ random absorbing sets and the  $(H,H^\frac54)$-attractor of  the  solutions and has a finite fractal dimensional.     In Section \ref{sec4}, by an indirect approach of  estimating the $H^\frac52$-distance between the random and the deterministic solution trajectories  we construct an  $H^\frac52$ random absorbing set. In Section \ref{sec5}, we  prove the local  $(H,H)$-Lipschitz continuity, $(H,H^\frac54)$-Lipschitz continuity and $(H,H^\frac52)$-Lipschitz continuity  of the RDS. In Section \ref{sec6}, we show that the random attractor    in $H$ is in fact an $(H,H^\frac52)$-random attractor of finite  fractal dimension in $H^\frac52$. In Section \ref{sec7}, let $f\in H^{k-\frac54}$, by using iterative methods, we first prove $H^k$ random absorbing sets and $(H,H^k)$-Lipschitz continuity and the random attractor    in $H$ is in fact an $(H,H^k)$-random attractor of finite  fractal dimension in $H^k$ for $k\geq\frac{5}{2}$.

\section{Preliminaries:   $(X,Y)$-random attractors and  their fractal dimension}\label{sec2}

In this section, we will introduce the basic concepts and $(X,Y)$-random attractor theory and their fractal dimensions,  see, e.g. \cite{arnold,crauel,cui18jdde,cui,cui2025,langa2}.

Suppose that $(X,\|\cdot\|_X)$ is a separable Banach space. Let $(\Omega,\mathcal{F},\mathbb{P})$ be a probability space, with
$\mathcal{F}$ being possibly not $\mathbb{P}$-complete, endowed with a group $\{\theta_t\}_{t\in\mathbb{R}}$ of measure-preserving self-transformations on $\Omega$. Let $\mathcal{B}(\cdot)$ be the Borel $\sigma$-algebra of a metric space, and let $\R^+:=[0,\infty)$.

\begin{definition}
A mapping $\phi:\mathbb{R}^+\times\Omega\times X\rightarrow X$ is called a {\em random dynamical system (abbrev. RDS)}   in $X$, if
\begin{enumerate}[(i)]
\item $\phi$ is $(\mathcal{B}(\mathbb{R}^+)\times\mathcal{F}\times\mathcal{B}(X),\mathcal{B}(X))$-measurable;

\item  for any $\omega\in\Omega$, $\phi(0,\omega,\cdot)$ is the identity on $X$;

\item  it holds the  following cocycle property
\begin{align*}
\phi(t+s,\omega,x)=\phi(t,\theta_s\omega,\phi(s,\omega,x)), \quad \forall t,s\in \mathbb{R}^+,~\omega\in\Omega,~x\in X;
\end{align*}
\item the mapping $x\mapsto \phi(t,\omega, x)$ is continuous.
\end{enumerate}
\end{definition}

A  set-valued mapping $\mathfrak D$: $\Omega\mapsto 2^X\setminus \emptyset $, $ \omega\mapsto \mathfrak D(\omega) $, is said to be a \emph{random set}  in $X$ if it is   \emph{measurable} in the sense that the mapping $\omega\to \dist_X(x, \mathfrak D(\omega))$    is  $(\mathcal F,\mathcal B(\R))$-measurable for any $x\in X$. If each its image $\mathfrak D(\omega)$ is closed (or  bounded, compact, etc.) in $X$, then $\mathfrak D$ is called a  closed (or  bounded, compact, etc.) random set in $X$.

A random variable $\zeta:\Omega\mapsto X$ is defined {\em tempered} if
\[
  \lim_{t\to \infty} e^{-\varepsilon t} |\zeta(\theta_{-t}\omega)| =0, \quad \varepsilon >0.
\]
If $\zeta $ is a  tempered random variable, then so is $|\zeta|^k$ for any $k\in \mathbb N$.
A random set $\mathfrak D$ in $X$  is said to be {\emph{tempered}}  if there exists a   tempered random variable $\zeta$ such that $\|\mathfrak D(\omega)\|_X:=\sup_{x\in \mathfrak D(\omega)} \|x\| \leq \zeta(\omega)$ for any $\omega\in \Omega$.

Let $\mathcal{D}_X$  be the collection of all the tempered random sets in $X$. We introduce the basic concepts as the following.

 \begin{definition}
A    random set $\mathfrak{B}$ in $X$ is called a {\emph{(random)  absorbing set}} of an RDS $\phi$ if  it pullback absorbs every tempered random set in $X$, i.e., if for any  $\mathfrak D\in\mathcal{D}_X$   there exists a random variable $T_{\mathfrak D} (\cdot)$ such that
\begin{align*}
\phi(t,\theta_{-t}\omega, \mathfrak D(\theta_{-t}\omega))\subset \mathfrak{B}(\omega),\quad t\geq T_{\mathfrak D}(\omega),\  \omega\in\Omega.
\end{align*}
\end{definition}

\begin{definition}
A random set $\mathcal{A}$  is said  the \emph{random attractor}   of an RDS  $\phi$ in $X$ if
\begin{enumerate}[(i)]
\item  $\mathcal{A}$ is a tempered and compact random set in $X$;

 \item  $\mathcal{A}$ is invariant under $\phi$, i.e.,
\begin{align*}
\mathcal{A}(\theta_t\omega)=\phi(t,\omega,\mathcal{A}(\omega)), \quad t\geq0,~\omega\in\Omega;
\end{align*}

 \item $\mathcal{A}$ denotes pullback attracts each tempered set in $X$, i.e.,  for every $\mathfrak D\in  \mathcal{D}_X$,
\begin{align*}
\lim\limits_{t\rightarrow\infty} {\rm dist}_{X} \big(\phi(t,\theta_{-t}\omega,\mathfrak D(\theta_{-t}\omega)), \, \mathcal{A}(\omega) \big)=0,\quad \omega\in\Omega ,
\end{align*}
where $\dist _{X}(\cdot, \cdot) $  is  the Hausdorff semi-metric between  non-empty subsets  of  $ X$, i.e.,
\begin{align*}
 \dist_{X}(A,B)= \sup_{a\in A} \inf_{b\in B} \|a-b\|_X, \quad \forall A,B\subset X.
\end{align*}
\end{enumerate}
\end{definition}

Assume that $Y$ denotes a Banach space with continuous embedding $Y \hookrightarrow X$.
\begin{definition}
A random set $\mathcal{A}$   is said  the {\emph{$(X,Y)$-random attractor}}  of  an RDS $\phi$ in $X$,  if
\begin{enumerate}[(i)]
\item  $\mathcal{A}$ is a tempered and compact random set in $Y$;

 \item  $\mathcal{A}$ is invariant under $\phi$, i.e.,
\begin{align*}
\mathcal{A}(\theta_t\omega)=\phi(t,\omega,\mathcal{A}(\omega)), \quad t\geq0,~\omega\in\Omega;
\end{align*}

 \item $\mathcal{A}$ is pullback attracts each tempered set in $X$ in the metric of $Y$, i.e.,  for every $\mathfrak D\in  \mathcal{D}_X$,
 \[
\lim\limits_{t\rightarrow\infty} {\rm dist}_{Y} \big(\phi(t,\theta_{-t}\omega, \mathfrak D(\theta_{-t}\omega)), \, \mathcal{A}(\omega) \big)=0,\quad \omega\in\Omega .
 \]
\end{enumerate}
\end{definition}
Then we introduce the main criterion as following.
\begin{lemma}(\cite[Theorem 19]{cui18jdde}) \label{lem:cui18}
 Assume that $\phi$ is an RDS.  Suppose that
  \begin{enumerate}[(i)]
  \item  $\phi$ has a   random absorbing set $\mathfrak B$ which is a tempered and closed random set in $Y$;
  \item $\phi$  is $(X,Y)$-asymptotically compact, that is, for every $ \mathfrak D\in \D_X$ and $t_n\to \infty$,   any sequence
  $\{\phi(t_n, \theta_{-t_n} \omega, x_n)\}_{n\in \N}$ with $x_n \in \mathfrak D(\theta_{-t_n} \omega)$, $\omega\in \Omega$, has a convergent subsequence in $Y$.
  \end{enumerate}
    Then  $\phi$ has a unique   $(X,Y)$-random attractor $\A$ defined by
    \[
    \A(\omega) = \bigcap_{s\geq 0} \overline{ \bigcup_{t\geq  s} \phi(t,\theta_{-t} \omega, \mathfrak  B(\theta_{-t}\omega)) }^Y ,
    \quad \omega \in \Omega.
    \]
  \end{lemma}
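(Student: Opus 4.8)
The plan is to verify that the set $\A(\omega)$ exhibited in the statement, which is exactly the $Y$-pullback $\omega$-limit set $\Omega_{\B}(\omega)$ of the absorbing set $\B$, satisfies all the defining properties of an $(X,Y)$-random attractor, and then to argue uniqueness. For a general $\mathfrak D\in\D_X$ I would introduce $\Omega_{\mathfrak D}(\omega)=\bigcap_{s\ge0}\overline{\bigcup_{t\ge s}\phi(t,\theta_{-t}\omega,\mathfrak D(\theta_{-t}\omega))}^{\,Y}$ and use the standard characterization that $y\in\Omega_{\mathfrak D}(\omega)$ if and only if there exist $t_n\to\infty$ and $x_n\in\mathfrak D(\theta_{-t_n}\omega)$ with $\phi(t_n,\theta_{-t_n}\omega,x_n)\to y$ in $Y$. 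The two hypotheses enter in complementary ways: asymptotic compactness (ii) supplies $Y$-convergent subsequences, hence nonempty and precompact limit sets, while the absorbing property (i) lets me compare an arbitrary $\Omega_{\mathfrak D}$ with $\Omega_{\B}$.

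First I would fix $\omega$ and show that $\A(\omega)=\Omega_{\B}(\omega)$ is nonempty and compact in $Y$. Since $\B$ is tempered in $Y$ and $Y\hookrightarrow X$, it belongs to $\D_X$; choosing any $t_n\to\infty$ and $b_n\in\B(\theta_{-t_n}\omega)$, hypothesis (ii) yields a $Y$-convergent subsequence of $\{\phi(t_n,\theta_{-t_n}\omega,b_n)\}$, so $\Omega_{\B}(\omega)\neq\emptyset$. Closedness is automatic as an intersection of closed sets, and precompactness is precisely the content of (ii) applied to sequences drawn from $\B$, so $\A(\omega)$ is compact in $Y$. I would also record the inclusion $\A(\omega)\subseteq\B(\omega)$: for $s\ge T_{\B}(\omega)$ the absorbing property gives $\bigcup_{t\ge s}\phi(t,\theta_{-t}\omega,\B(\theta_{-t}\omega))\subseteq\B(\omega)$, and since $\B(\omega)$ is closed in $Y$ the $Y$-closure remains inside $\B(\omega)$; this inclusion makes $\A$ tempered because $\B$ is.

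Next I would prove attraction by contradiction. If $\A$ failed to attract some $\mathfrak D\in\D_X$ in the $Y$-metric, there would be $\varepsilon>0$, $t_n\to\infty$ and $x_n\in\mathfrak D(\theta_{-t_n}\omega)$ with $\dist_Y(\phi(t_n,\theta_{-t_n}\omega,x_n),\A(\omega))\ge\varepsilon$; by (ii) a subsequence converges in $Y$ to some $y$, and I would derive $y\in\A(\omega)$, a contradiction. The key is that for each fixed $s\ge0$ and all large $n$, absorption applied at the base point $\theta_{-s}\omega$ places $\phi(t_n-s,\theta_{-t_n}\omega,x_n)$ in $\B(\theta_{-s}\omega)$, whence the cocycle identity rewrites $\phi(t_n,\theta_{-t_n}\omega,x_n)=\phi(s,\theta_{-s}\omega,\phi(t_n-s,\theta_{-t_n}\omega,x_n))\in\bigcup_{t\ge s}\phi(t,\theta_{-t}\omega,\B(\theta_{-t}\omega))$; letting $n\to\infty$ puts $y$ in the $Y$-closure of this union for every $s\ge0$, i.e. $y\in\Omega_{\B}(\omega)=\A(\omega)$. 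Invariance $\phi(t,\omega,\A(\omega))=\A(\theta_t\omega)$ I would obtain from the same $\omega$-limit characterization together with the continuity of $x\mapsto\phi(t,\omega,x)$ and the cocycle property: the forward inclusion is direct, while the reverse inclusion uses asymptotic compactness once more to extract a preimage lying in $\A(\omega)$.

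Finally I would address measurability of $\omega\mapsto\A(\omega)$ as a random set in $Y$, which I expect to be the main obstacle: unlike the single-space case, one must verify that $\omega\mapsto\dist_Y(x,\A(\omega))$ is $(\mathcal F,\mathcal B(\R))$-measurable using only the measurability of $\phi$ and of $\B$ over a probability space that need not be $\mathbb P$-complete. The standard route is to express $\A$ via countable unions and intersections over rational times of the measurable maps $\omega\mapsto\phi(t,\theta_{-t}\omega,b)$, exploiting the continuity in $x$ and the separability of $Y$ to replace the uncountable union defining $\Omega_{\B}$ by a countable one, and then to invoke a projection or measurable-selection argument to conclude measurability in $Y$. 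Uniqueness is then routine: any two $(X,Y)$-random attractors are invariant, compact and mutually attracting in $Y$, which forces their equality $\mathbb P$-almost surely.
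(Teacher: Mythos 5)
Your dynamical arguments are correct and follow the standard route: nonemptiness and compactness of $\A(\omega)=\Omega_{\B}(\omega)$ via asymptotic compactness and a diagonal extraction, the inclusion $\A(\omega)\subseteq\B(\omega)$ (absorption plus $Y$-closedness of $\B(\omega)$) giving temperedness, pullback attraction by contradiction using absorption at the shifted fibre $\theta_{-s}\omega$ together with the cocycle identity, and uniqueness from mutual attraction of invariant tempered sets. Bear in mind that the paper itself gives no proof of this lemma --- it quotes it verbatim from \cite[Theorem 19]{cui18jdde} --- so the only meaningful comparison is with the proof in that reference. One gloss worth flagging: in the invariance step the forward inclusion is not ``direct'', because $\phi(t,\omega,\cdot)$ is continuous only as a map $X\to X$ while $\A(\theta_t\omega)$ is defined through $Y$-closures; both inclusions need hypothesis (ii), extracting a $Y$-convergent subsequence and identifying its limit with the $X$-limit through the embedding $Y\hookrightarrow X$.

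The genuine gap is the measurability of $\omega\mapsto\A(\omega)$ as a random set in $Y$, which you rightly call the main obstacle but then dispose of with an argument that does not work. First, the device of replacing the uncountable union in $\Omega_{\B}$ by a countable one ``using continuity in $x$ and separability of $Y$'' fails here: the continuity of $x\mapsto\phi(t,\omega,x)$ is only in the topology of $X$, so the image of an $X$-dense countable subset of $\B(\theta_{-t}\omega)$ is dense in the image only for the $X$-topology, not for the $Y$-topology; consequently $\omega\mapsto\dist_Y\big(y,\phi(t,\theta_{-t}\omega,\B(\theta_{-t}\omega))\big)$ cannot be shown measurable this way. Second, the ``projection or measurable-selection argument'' you invoke requires a $\mathbb{P}$-complete $\sigma$-algebra, whereas the lemma is stated precisely for $\mathcal F$ possibly not complete (you yourself note this), so that route is closed. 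This difficulty is exactly what the cited paper was written to resolve: its proof of Theorem 19 avoids selection theorems and instead uses the notion of quasi $(X,Y)$ strong-to-weak continuity --- automatically satisfied by any $X$-continuous RDS when $Y$ is a reflexive space embedded in $X$, by identifying weak $Y$-limits with strong $X$-limits --- which restores measurability of the $Y$-distance functions from the $X$-measurability of the attractor and weak lower semicontinuity of the $Y$-norm. Without an argument of this type (or an added completeness assumption), your proof establishes every property of $\A$ except that it is a random set in $Y$, and that is part of the definition of an $(X,Y)$-random attractor, hence part of what must be proved.
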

Finally, we introduce the definition of the fractal dimension. Let $E$ be a precompact  set in $X$, and let $N(E, \varepsilon)$ be the minimum number of balls of radius $\varepsilon$ in $X$ required to cover $E$. Then the {\emph{fractal dimension of $E$}}  is defined as
\[
 d_f^X(E)=\limsup_{\varepsilon\to 0} \frac{ \log N(E,\varepsilon)}{ -\log \varepsilon} ,
\]
where the superscript ``$\ ^X$''   indicates the space to which  the fractal dimension is referred, see, e.g. \cite{langa2,robinson11}.

\section{The fractional three-dimensional Navier-Stokes equations and the   attractors in $H$}\label{sec3}
 In this section, we first  introduce the  functional spaces and basic result of the deterministic fractional three-dimensional NS equations, and then associate an RDS to the random fractional three-dimensional NS equations.    We construct  $H^\frac54$ random absorbing sets and the  $(H,H^\frac54)$-attractor of  the  solutions and has a finite fractal dimensional.

\subsection{Functional spaces}

In this subsection, we introduce the  functional spaces and Poincar\'e's inequality as follows,  see, e.g., \cite{robinson01,temam1,temam}.
  Let $(C_{per}^\infty(\mathbb T^3))^3 $ be the space of three component infinitely differentiable functions that are $L$-periodic in each direction, and by  $\LL^2$ the completion of $(C_{per}^\infty(\mathbb T^3))^3 $  with respect to the $(L^2(\TT^3))^3$ norm.

 The phase space $(H,\|\cdot\|)$ is a subspace of  $\LL^2$ of functions with zero mean and free divergence,  that is,
\begin{align*}
H= \left \{u\in \LL ^2:
\, \int_{\mathbb T^3}u  \d x=0,\  {\rm div}\, u=0 \right \},
\end{align*}
endowed with the norm of $\LL^2$, i.e.,
  $\|\cdot\|=\|\cdot\|_{\LL^2}$.   For any $p>2$,  we write $(L^p(\TT^3))^3$  simply as $L^p$ and its norm as $\|\cdot\|_{L^p}$.

Note that any $u\in \LL^2 $ has an expression $u=\sum_{j\in \mathbb{Z}^3} \hat u_j e^{{\rm i}j\cdot x}$, where  $ {\rm i}= \sqrt{-1}$ and $\hat u_j$ are Fourier coefficients.  Hence, $\int_{\mathbb T^3}    u  \d x=0$ is equivalent to $\hat u_0=0$. Then, any  $u\in H $ is in form as follows
\begin{align*}
 u =\sum_{j\in \mathbb Z^3\setminus\{0\}} \hat u_j e^{{\rm i}j\cdot x} .
\end{align*}
 Let $P: \LL ^2\rightarrow H$ denote the Helmholtz-Leray orthogonal projection operator. In  this periodic space, we define the stokes operator  $Au=-P\Delta u=-\Delta u$ for any $u\in D(A) $, here $D(A)$ is given as follows. In  this bounded domain, the projector $P$ can not commute with derivatives. Then the operator $A^{-1}$  is a self-adjoint positive-definite compact operator from $H$  to $H$.

   For $s >0 $,    the Sobolev space  $H^s:=D(A^{s/2})$  is given in a standard way by
\begin{align*}
H^s =\left \{u\in H:\|u\|^2_{H^s}=\sum\limits_{j\in \mathbb{Z}^{3}\backslash \{0\}}|j|^{2s}|\hat{u}_j|^2<\infty \right \},
\end{align*}
endowed with the norm $\|\cdot\|_{H^s}=\|A^{s/2}\cdot \|$.

For $u,v\in H^1$,  we introduce the following bilinear form
\begin{align*}
B(u,v)=P((u\cdot\nabla)v) ,
\end{align*}
and, in particular, $B(u):=B(u,u)$.

We introduce  the following Poincar\'e's inequality, see, e.g., \cite{Huo2016,robinson01,temam}.

\begin{lemma}[Poincar\'e's inequality]  If $u\in H^{\frac{5}{4}}$, then
\begin{equation} \label{poin}
 \lambda_1^{\frac{5}{4}} \|u\|^2 \leq \| u\|_{H^{\frac{5}{4}}}^2,
\end{equation}
here $\lambda_1$ is the first eigenvalue of $-\Delta$.
\end{lemma}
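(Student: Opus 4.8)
The plan is to reduce \eqref{poin} to an elementary comparison of Fourier multipliers, exploiting that the Stokes operator $A$ is diagonalized by the trigonometric basis and that both norms appearing in \eqref{poin} are diagonal in this same basis. First I would record, consistently with the definitions of $\|\cdot\|$ and $\|\cdot\|_{H^s}$ given above, that for $u=\sum_{j\in\mathbb Z^3\setminus\{0\}}\hat u_j e^{{\rm i}j\cdot x}$ one has
\[
\|u\|^2=\sum_{j\in\mathbb Z^3\setminus\{0\}}|\hat u_j|^2,
\qquad
\|u\|_{H^{5/4}}^2=\sum_{j\in\mathbb Z^3\setminus\{0\}}|j|^{5/2}\,|\hat u_j|^2,
\]
the second being the $s=\frac54$ instance of $\|u\|_{H^s}^2=\sum_{j\ne0}|j|^{2s}|\hat u_j|^2$ on $H^s=D(A^{s/2})$. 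Since both series run over the same nonzero modes, no normalization constant intervenes.

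The only substantive input is the location of the spectrum of $A=-\Delta$: it is diagonalized by $\{e^{{\rm i}j\cdot x}\}$ with eigenvalues $\{|j|^2\}$, and the zero-mean constraint $\hat u_0=0$ built into $H$ removes the zero eigenvalue, so the smallest relevant eigenvalue is precisely $\lambda_1=4\pi^2/L^2$, the minimum of $|j|^2$ over the dual lattice. Consequently $|j|^2\ge\lambda_1$ for every admissible $j\ne0$, and raising to the power $\frac54$ gives the pointwise multiplier bound $|j|^{5/2}=(|j|^2)^{5/4}\ge\lambda_1^{5/4}$.

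Finally I would multiply this bound by the nonnegative weight $|\hat u_j|^2$ and sum over $j\ne0$, obtaining
\[
\|u\|_{H^{5/4}}^2=\sum_{j\ne0}|j|^{5/2}|\hat u_j|^2
\ge\lambda_1^{5/4}\sum_{j\ne0}|\hat u_j|^2=\lambda_1^{5/4}\|u\|^2,
\]
which is exactly \eqref{poin}. I expect essentially no obstacle: the argument is pure monotonicity of $t\mapsto t^{5/4}$ combined with nonnegativity of the summands, and the only point needing care is the spectral bookkeeping that identifies $\lambda_1$ as $\min_{j\ne0}|j|^2$. The identical computation with a general exponent in place of $\frac54$ yields the full Poincar\'e inequality $\lambda_1^{s}\|u\|^2\le\|u\|_{H^{s}}^2$ for every $s>0$, of which the stated lemma is the case $s=\frac54$.
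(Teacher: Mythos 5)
Your proof is correct. Note that the paper itself offers no proof of this lemma: it is stated with a citation to the literature (Huo--Huang, Robinson, Temam), so there is no internal argument to compare against. Your spectral argument --- expand $u$ in the trigonometric basis that diagonalizes $A$, drop the zero mode (which is excluded by the mean-zero condition built into $H$), bound the multiplier $|j|^{5/2}=(|j|^2)^{5/4}\geq\lambda_1^{5/4}$ by monotonicity of $t\mapsto t^{5/4}$, and sum against the nonnegative weights $|\hat u_j|^2$ --- is precisely the standard proof those references contain, and it establishes the general case $\lambda_1^{s}\|u\|^2\leq\|u\|_{H^s}^2$ for all $s>0$ at no extra cost.

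The one point worth flagging is the spectral bookkeeping you yourself identify as the only delicate step. The paper writes the expansion as $u=\sum_{j\in\mathbb Z^3\setminus\{0\}}\hat u_j e^{{\rm i}j\cdot x}$ with $j$ ranging over the \emph{integer} lattice and defines $\|u\|_{H^s}^2=\sum_{j\neq 0}|j|^{2s}|\hat u_j|^2$, while simultaneously taking $\lambda_1=4\pi^2/L^2$. These conventions are mutually consistent only when the frequencies are understood as points of the dual lattice $(2\pi/L)\mathbb Z^3$ (equivalently, when $L=2\pi$, in which case $\min_{j\neq0}|j|^2=1=\lambda_1$). Your reading --- that $|j|$ denotes a dual-lattice frequency, so that $\min_{j\neq 0}|j|^2=4\pi^2/L^2=\lambda_1$ --- is the interpretation that makes the stated inequality true for general $L$, and your remark that the Plancherel normalization cancels is correct because both norms are diagonal sums over the same modes with the same convention. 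So the proof goes through as written; just be aware that you silently repaired, rather than inherited, the paper's notational conflation.
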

For later purpose we  denote by $\lambda:= \alpha \nu \lambda_1^{\frac{5}{4}}/4$. Assume that  the noise  intensity $h$  satisfy Assumption  \ref{assum}, i.e.,     $ h \in H^{\frac{15}{4}}$ and
\begin{align}\label{2.3}
  \|\nabla h\|_{L^\infty} <  {\sqrt \pi} \nu \lambda_1^{\frac{5}{4}} .
\end{align}
 (Especially, $\|\nabla h\|_{L^\infty} < \sqrt{\pi} \nu $  for the scale $L=2\pi$).
Obviously, the  tolerance of the noise  intensity     increases as the kinematic viscosity $\nu$ increases.

\subsection{Global attractor of the deterministic fractional three-dimensional Navier-Stokes equations}

In this subsection, we introduce  basic result of the deterministic fractional three-dimensional NS equations \eqref{2}. We   now  recall the global attractor of the deterministic fractional three-dimensional NS equations \eqref{2}, i.e. for $h(x)\equiv 0$,  which will be crucial for our analysis later. Under projection $P$, the equations \eqref{2}   are rewritten as
 \begin{align}\label{2.1}
   \partial_tu+ \nu A^{\frac54}u+B(u)  =f,\quad u(0)=u_{0}.
\end{align}
This equations   generate an autonomous dynamical system $S$ in phase space $H$ with a global attractor $\A_0$. Furthermore,    the following regularity result of the attractor $\A_0$ seems  optimal for $f\in H$  in the literature,  see, e.g., \cite{liu2023,liu,robinson01}.

\begin{lemma}\label{lem:det}
For $f\in H$, then the deterministic fractional three-dimensional NS equation \eqref{2.1}  generate a semigroup $S$ which possesses a global attractor $\A_0$ in $H$ and has an
  absorbing set which is bounded in $H^{\frac{5}{2}}$. In particular,    the global attractor $\A_0$   is   bounded in $H^{\frac{5}{2}}$.
\end{lemma}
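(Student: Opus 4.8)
The plan is to build a chain of absorbing sets in $H$, $H^{\frac54}$, and $H^{\frac52}$, obtaining the top level by a time-derivative estimate combined with elliptic regularity rather than by a direct energy estimate in $H^{\frac52}$, precisely because $f$ lies only in $H$. First I would invoke global well-posedness at the critical exponent $\alpha=\frac54$ to obtain the semigroup $S$ on $H$. Testing \eqref{2.1} against $u$ and using $(B(u),u)=0$ gives $\frac12\frac{\d}{\d t}\|u\|^2+\nu\|u\|_{H^{\frac54}}^2=(f,u)$; Poincar\'e's inequality \eqref{poin} and Young's inequality then yield an absorbing ball in $H$ together with the uniform integral bound $\int_t^{t+1}\|u\|_{H^{\frac54}}^2\,\d s\le C$. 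Testing against $A^{\frac54}u$ and estimating $(B(u),A^{\frac54}u)$ by interpolation (this is where the criticality $\alpha=\frac54$ enters) upgrades this to an absorbing set bounded in $H^{\frac54}$; since $(f,A^{\frac54}u)\le\|f\|\,\|A^{\frac54}u\|$ needs only $f\in H$, this step is available. The compact embedding $H^{\frac54}\hookrightarrow H$ then produces the global attractor $\A_0$ in $H$ by the standard theory.

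For the $H^{\frac52}$ bound the direct route fails: testing \eqref{2.1} against $A^{\frac52}u$ forces the pairing $(f,A^{\frac52}u)=(A^{\frac58}f,A^{\frac{15}8}u)$, which requires $f\in H^{\frac54}$, whereas only $f\in H$ is at hand. Instead I would read the stationary form of the equation, $\nu A^{\frac54}u=f-B(u)-u_t$, as an elliptic identity: if its right-hand side is bounded in $H$, then $A^{\frac54}u\in H$, i.e.\ $u\in H^{\frac52}$. On the $H^{\frac54}$ absorbing set a H\"older--Sobolev estimate on $\mathbb T^3$ gives $B(u)=P((u\cdot\nabla)u)\in H$ uniformly, and $f\in H$ by hypothesis, so everything reduces to a uniform bound for $u_t$ in $H$.

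The $u_t$ estimate proceeds in two stages. Testing \eqref{2.1} against $u_t$ and integrating over $[t,t+1]$ produces the time-integrated bound $\int_t^{t+1}\|u_t\|^2\,\d s\le C$. Differentiating \eqref{2.1} in time and testing against $u_t$, using $(B(u,u_t),u_t)=0$, gives
\[
\tfrac12\tfrac{\d}{\d t}\|u_t\|^2+\nu\|u_t\|_{H^{\frac54}}^2=-(B(u_t,u),u_t),
\]
and the bilinear term is controlled by $\|u_t\|_{L^4}^2\|\nabla u\|\le C\|u_t\|^{4/5}\|u_t\|_{H^{\frac54}}^{6/5}\|\nabla u\|$ (via $H^{\frac34}\hookrightarrow L^4$ and interpolation) and absorbed into $\nu\|u_t\|_{H^{\frac54}}^2$, leaving $\frac{\d}{\d t}\|u_t\|^2\le C\|\nabla u\|^{5/2}\|u_t\|^2$. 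The uniform Gronwall lemma, fed by the two integral bounds, yields an asymptotic $L^\infty$-in-time bound $\|u_t\|\le C$; substituting back into $\nu A^{\frac54}u=f-B(u)-u_t$ gives the $H^{\frac52}$ absorbing set. Since $\A_0$ is contained in this absorbing set, it is bounded in $H^{\frac52}$.

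I expect the main obstacle to be the uniform control of $u_t$: the time derivative is singular as $t\to0$, so it cannot be estimated classically and one must instead rely on the uniform Gronwall lemma, which needs the time-integrated bounds $\int_t^{t+1}\|u_t\|^2\,\d s$ and $\int_t^{t+1}\|\nabla u\|^{5/2}\,\d s$ assembled beforehand. This differentiation-in-time device is exactly what is unavailable for the stochastic equation \eqref{1.3}, since the Wiener process is not differentiable in time; this is why the later sections replace it by the indirect comparison argument with the deterministic trajectories.
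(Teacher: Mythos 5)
Your proposal is correct and follows essentially the same route the paper relies on: the paper does not prove Lemma \ref{lem:det} itself but cites \cite{robinson01,liu,liu2023}, and its Remark \ref{rmk1} explicitly describes that proof as estimating the time derivative $\partial_t u$ rather than the solution $u$ itself, which---combined with the elliptic identity $\nu A^{\frac54}u=f-B(u)-u_t$ and the uniform Gronwall lemma---is exactly your argument. Your closing observation that this differentiation-in-time device is unavailable for the stochastic equation, forcing the comparison approach of Section \ref{sec4}, also matches the paper's Remark \ref{rmk1}.
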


By Lemma \ref{lem:det} and \cite{langa2,liu2023,robinson01}, we show that the global attractor $\mathcal A$ is not only bounded in $H^{\frac{5}{2}}$, but also has   finite fractal dimension (and thus compact) in $H^{\frac{5}{2}}$.

\begin{remark}  \label{rmk1}
When $f\in H$, we  can not get an estimation  of  $\|A^\frac54u\|$ directly and thus Lemma \ref{lem:det} was shown by estimating the time-derivatives $ \partial_t u $  of solutions    rather than estimating the solutions  $u$ themselves.       Then the   method  can not apply to the  random fractional three-dimensional NS  equations \eqref{2.2}  since the Wiener process is not derivable in time.  Therefore, in Section \ref{sec4} we will apply a comparison approach  to prove $H^\frac52$ random absorbing sets for \eqref{2.2}.
\end{remark}
\begin{remark}
Based on \cite{song,song1}, we will also prove $(H,H^2)$-global attractor and $(H,H^2)$-uniform attractor of the deterministic three-dimensional NS equations with damping.
\end{remark}

\subsection{Generation of an RDS}\label{sec2.1}

In this subsection, we   introduce  associate an RDS to the random fractional three-dimensional NS equations.
Firstly, we will transform the fractional stochastic three-dimensional Navier-Stokes equations \eqref{1}   to a fractional random three-dimensional NS equations  which    generate an RDS $\phi$ on $H$.
          Inspired by the references in \cite{arnold,crauel97jdde},    we set
 \[
z(\theta_t\omega)=-\int_{-\infty}^0e^\tau(\theta_t\omega)(\tau)\d \tau,\quad \forall t\in \mathbb{R},\ \omega\in\Omega.
\]
Then $z(\theta_t\omega)$ is   the one-dimensional Ornstein-Uhlenbeck process which solves the following equation
 \[
\d z(\theta_t\omega)+z(\theta_t\omega) \d t= \d \omega.
 \]
By virtue of \cite{arnold,wang2024,zhou}, there exists a $\theta_t $-invariant  set of full measure $\tilde \Omega\subset \Omega$  such that $z(\theta_t\omega)$ is continuous in $t$  for every $\omega \in \tilde\Omega$ and
\begin{align*}
\lim_{t\to \pm\infty}\frac{|z(\theta_t\omega)|}{|t|}=0,\quad {\rm and} \quad \lim_{t\to \pm\infty}\frac 1t \int^t_0 z(\theta_s\omega) \d s =0,
\end{align*}
and
\begin{align} \label{erg}
\lim\limits_{t\rightarrow \pm \infty}\frac{1}{t}\int_0^t|z(\theta_s\omega)|^m \d s=\mathbb{E}  \big (|z(\theta_t\omega)|^m
\big) =\frac{\Gamma  (\frac{1+m}{2} )}{\sqrt{\pi}}
,\quad \omega\in \tilde \Omega,
\end{align}
for  any $m\geq 1$, where $\Gamma$ is the Gamma function. Moreover, we have the random variable $|z(\cdot )|$ is tempered, namely, for each $\varepsilon>0$ it yields
\begin{align*}
   \lim_{t \to \infty} e^{-\varepsilon t} | z(\theta_{-t}\omega )| =0,\quad \forall  \omega\in \Omega .
\end{align*}
From now on, we will not distinguish $\tilde \Omega$ and $\Omega$. We next introduce the following transformation
 \[
u_h(t)=v(t)+hz(\theta_t\omega),\quad t>0,\ \omega\in \Omega.
 \]
Under projection $P$, the  system  \eqref{1} of $u_h$  is  rewritten as   the following  abstract evolution equations in $H$
\begin{align}\label{2.2}
\frac{\d v}{\d t}+ \nu A^{\frac54}v+B\big( v(t)+hz(\theta_t\omega)\big)=f(x)-  \nu A^{\frac54}hz(\theta_t\omega)+hz(\theta_t\omega),
\end{align}
with $v(0)=u_h(0)-hz(\omega)$.

By using the classic Galerkin method, we get the  existence and uniqueness of weak solutions of system \eqref{2.2} as stated below.
\begin{lemma}
Assume that  $f\in H$ and Assumption \ref{assum} hold.  For any $v(0)\in H$ and $\omega\in\Omega$, then there exists a unique weak solution
\begin{align*}
v (t) \in C_{loc}([0,\infty);H)\cap L^2_{loc} (0,\infty;H^{\frac{5}{4}} )
\end{align*}
satisfying \eqref{2.2} in distribution sense with $v|_{t=0}=v(0)$.  In addition, this solution is continuous in initial values in $H$.
\end{lemma}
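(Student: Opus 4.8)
The plan is to use the Faedo--Galerkin method referenced in the text. Let $\{e_j\}_{j\ge 1}$ be the eigenfunctions of the Stokes operator $A$ with eigenvalues $0<\lambda_1\le\lambda_2\le\cdots$, let $P_m$ be the orthogonal projection onto $\mathrm{span}\{e_1,\dots,e_m\}$, and seek $v_m(t)=\sum_{j=1}^m c_{m,j}(t)e_j$ solving the system obtained by applying $P_m$ to \eqref{2.2}, with $v_m(0)=P_m v(0)$. For each fixed $\omega\in\Omega$ the process $z(\theta_t\omega)$ is continuous in $t$, so the right-hand side is locally Lipschitz in the unknown and continuous in $t$; hence a unique local solution $v_m$ exists by Cauchy--Picard, and it extends globally once the a priori bounds below hold.

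Next I would derive the energy estimates by taking the $H$-inner product of the projected equation with $v_m$. Writing $u_m:=v_m+hz(\theta_t\omega)$ and using the orthogonality $\langle B(a,b),b\rangle=0$ for divergence-free $a$, the cubic term reduces to $\langle B(u_m),v_m\rangle=-\langle B(u_m),hz(\theta_t\omega)\rangle$, which after one integration by parts is controlled by $|z(\theta_t\omega)|\,\|\nabla h\|_{L^\infty}\|u_m\|^2$. Combining this with the dissipation $\nu\|v_m\|_{H^{5/4}}^2$, Poincar\'e's inequality \eqref{poin}, the bound on $\|\nabla h\|_{L^\infty}$ from Assumption \ref{assum} (whose constant $\sqrt\pi$ matches $\mathbb E|z|=1/\sqrt\pi$ in \eqref{erg}), and Young's inequality on the terms $f$, $\nu A^{5/4}hz$ and $hz$, one obtains $\frac{d}{dt}\|v_m\|^2+\nu\|v_m\|_{H^{5/4}}^2\le a(t)\|v_m\|^2+b(t)$ with $a,b$ locally bounded in $t$ (since $z(\theta_\cdot\omega)$ is continuous). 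Gronwall then gives bounds on $v_m$ in $L^\infty(0,T;H)\cap L^2(0,T;H^{5/4})$ that are uniform in $m$ for every finite $T$.

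To pass to the limit, I would estimate $\partial_t v_m$ in $L^{q}(0,T;H^{-s})$ for suitable $q>1$, $s>0$, using $\|A^{5/4}v_m\|_{H^{-5/4}}\le\|v_m\|_{H^{5/4}}$ together with a negative-norm bound on $B(u_m)$ from the uniform energy estimates; the Aubin--Lions lemma then yields a subsequence converging strongly in $L^2(0,T;H)$ and weakly in $L^2(0,T;H^{5/4})$. Strong $L^2_tH$ convergence is exactly what lets one pass to the limit in the quadratic term, the linear terms pass by weak convergence, so the limit $v$ is a weak solution, and $v\in C([0,T];H)$ follows from $v\in L^2_tH^{5/4}$, $\partial_t v\in L^2_tH^{-5/4}$ by the Lions--Magenes lemma. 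For uniqueness and continuous dependence, set $w=v_1-v_2$ for two solutions with the same $\omega$; the $hz$ terms cancel, and testing the difference equation with $w$ while using $\langle B(u_1,w),w\rangle=0$ leaves only $\langle B(w,u_2),w\rangle$, which a Gagliardo--Nirenberg interpolation bounds so as to yield $\frac{d}{dt}\|w\|^2+\nu\|w\|_{H^{5/4}}^2\le g(t)\|w\|^2$ with $g(t)=C\|u_2\|^{4/3}\|u_2\|_{H^{5/4}}^2\in L^1_{loc}$; Gronwall then forces $w\equiv 0$ when $w(0)=0$ and gives the asserted continuity in $H$.

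I expect the main obstacle to be this final trilinear estimate at the critical exponent $\alpha=5/4$. The crude bound $|\langle B(w,u_2),w\rangle|\le\|w\|_{L^4}^2\|u_2\|_{H^1}$ produces the time weight $\|u_2\|_{H^{5/4}}^{5/2}$, which is only $L^{4/5}_t$ and hence not integrable; the estimate closes only if one places a derivative on $w$ and interpolates sharply, e.g.\ $|\langle B(w,u_2),w\rangle|\le C\|w\|^{3/5}\|w\|_{H^{5/4}}^{7/5}\|u_2\|^{2/5}\|u_2\|_{H^{5/4}}^{3/5}$, and then applies Young's inequality so that the surviving weight is $\|u_2\|_{H^{5/4}}^2\in L^1_t$. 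This is precisely the borderline algebra that makes $\alpha=5/4$ the critical value in three dimensions, and it is the one place where the exponents must be tracked exactly rather than estimated loosely.
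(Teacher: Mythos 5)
Your proposal is correct and follows essentially the same route as the paper, which states this lemma without a detailed proof and simply invokes ``the classic Galerkin method'' — precisely the construction (Galerkin approximation, energy estimates in $L^\infty_t H\cap L^2_t H^{5/4}$, Aubin--Lions compactness, and an interpolation-based uniqueness estimate) that you spell out. Your key trilinear bound $|\langle B(w,u_2),w\rangle|\le C\|w\|^{3/5}\|w\|_{H^{5/4}}^{7/5}\|u_2\|^{2/5}\|u_2\|_{H^{5/4}}^{3/5}$ checks out (via $\|w\|_{L^4}\|\nabla w\|_{L^2}\|u_2\|_{L^4}$ and Gagliardo--Nirenberg), and after Young's inequality it yields the integrable weight $\|u_2\|^{4/3}\|u_2\|_{H^{5/4}}^2$, so the uniqueness and continuous-dependence argument closes at the critical exponent $\alpha=\tfrac54$ exactly as you describe.
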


Hence, by
\begin{align*}
 \phi(t,\omega, v_0) =v(t,\omega,v_0),\quad  \forall t\geq 0,\, \omega\in \Omega, \, v_0\in H,
\end{align*}
where $v$ is the solution of system \eqref{2.2},
we associated an RDS $\phi$ in $H$ to system \eqref{2.2}.

\subsection{The random attractor in $H$}
In this subsection, we    construct  $H^\frac54$ random absorbing sets and the  $(H,H^\frac54)$-attractor of  the  solutions and has a finite fractal dimensional. We first prove the random attractor $\A$ in $H$ for the RDS $\phi$.     Under a new   condition \eqref{2.3} in Assumption \ref{assum} and initial values $v(0)\in H$, by using the standard method, we will prove the bounds $H$  and $H^{\frac{5}{4}}$ of  solutions of system \eqref{2.2}.  In addition, we will prove the random attractor has a finite fractal dimensional in $H$, as the Theorem \ref{theorem4.6}. The main results are important for proving the following  $H^{\frac{5}{2}}$ random absorbing sets and the local $(H,H^{\frac{5}{2}})$-Lipschitz continuity of  $\phi$.

 Under the condition \eqref{2.3} in Assumption \ref{assum}, we first prove some uniform estimates on solutions  of system \eqref{2.2} corresponding to initial values $v(0)$ in $ H$.   In  this paper,  let $C$  be a positive constant whose value may vary  from line to line.

  We will frequently make use of  two  fundamental lemmas as follows,  see, e.g., \cite{temam1,temam}.

\begin{lemma}[{Gronwall's lemma}]
Assume that $x(t)$ is  a function from $\R$ to $\R^+$ such that
\begin{align*}
\dot x +a(t)x\leq b(t) .
\end{align*}
 Then for any $t\geq s $,
\begin{align*}
 x(t)\leq e^{ -\int^t_s a(\tau)  \d \tau} x(s)+\int_s^t e^{-\int^t_\eta a(\tau) \, \d \tau } b(\eta)  \d \eta.
\end{align*}
\end{lemma}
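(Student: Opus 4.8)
The plan is to establish this by the classical integrating-factor argument, treating the differential inequality exactly as one treats the associated linear ODE. First I would introduce the integrating factor $\mu(t) := \exp\!\big(\int_s^t a(\tau)\,\d\tau\big)$, which is well defined and strictly positive under the implicit assumption that $a$ is locally integrable. The point is that $\mu$ solves $\dot\mu = a\mu$ with $\mu(s)=1$, so that multiplying the hypothesis $\dot x + a(t)x \le b(t)$ by the positive factor $\mu(t)$ collapses the left-hand side into an exact derivative, giving $\frac{\d}{\d t}\big(\mu(t)x(t)\big) = \mu(t)\dot x(t) + a(t)\mu(t)x(t) \le \mu(t)b(t)$.

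Next I would integrate this over $[s,t]$ for $t \ge s$; since $\mu(s)=1$ this produces $\mu(t)x(t) - x(s) \le \int_s^t \mu(\eta)b(\eta)\,\d\eta$. Dividing through by $\mu(t)>0$ and rewriting the resulting exponential weights via the identities $\mu(t)^{-1} = \exp\!\big(-\int_s^t a(\tau)\,\d\tau\big)$ and $\mu(\eta)/\mu(t) = \exp\!\big(-\int_\eta^t a(\tau)\,\d\tau\big)$, both immediate from additivity of the integral in the exponent, recovers precisely the asserted bound.

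There is no genuinely hard step here: the whole argument is a one-line manipulation once the integrating factor is chosen, and the direction of the inequality is preserved throughout because $\mu$ is positive and integration is monotone. The only matters requiring care are the standing regularity assumptions that make the manipulation rigorous — namely that $x$ is absolutely continuous, so that the fundamental theorem of calculus applies to the product $\mu x$, and that $a$ and $b$ are locally integrable, so that $\mu$ and the right-hand integral are finite. I would also remark in passing that the stated nonnegativity $x:\R\to\R^+$ plays no role in this derivation; it is carried along harmlessly and merely reflects the intended application to quantities such as squared norms.
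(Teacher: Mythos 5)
Your proof is correct: the integrating-factor argument with $\mu(t)=\exp\big(\int_s^t a(\tau)\,\d\tau\big)$, the monotonicity of integration, and the exponent identities are all handled properly, and this is precisely the classical proof found in the references (Temam) that the paper cites, since the paper itself states this lemma without proof. Your closing remarks on the needed regularity of $x$, $a$, $b$ and on the irrelevance of the nonnegativity assumption are also accurate.
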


\begin{lemma}[Gagliardo-Nirenberg inequality]  If $u\in L^q(\mathbb{T}^3)$, $D^{m}u\in L^r(\mathbb{T}^3)$, $1\leq q,r\leq\infty$, then there exists a positive constant $C$ such that
\begin{align*}
 \|D^ju\|_{L^p} \leq  C \|D^mu\|_{L^r}^a\|u\|^{1-a}_{L^q},
\end{align*}
where
\begin{equation*}
\frac{1}{p}-\frac{j}{3}=a \left (\frac{1}{r}-\frac{m}{3} \right)+ \frac{1-a}{q}, \quad 1\leq p\leq\infty,~0\leq j\leq m,~\frac{j}{m}\leq a\leq1,
\end{equation*}  and
$C$ depends only on $\{m, \, j, \, a, \, q, \,r \}$.
\end{lemma}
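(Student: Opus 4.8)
The plan is to first fix the exponent relation by a scaling argument and then prove the inequality itself by reducing the periodic estimate to the classical Gagliardo--Nirenberg inequality on $\mathbb{R}^3$ through a bounded extension operator. The scaling step is what produces the constraint on $\{p,q,r,j,m,a\}$: setting $u_\lambda(x):=u(\lambda x)$ for $\lambda>0$ one computes $\|D^ju_\lambda\|_{L^p}=\lambda^{j-3/p}\|D^ju\|_{L^p}$, $\|D^mu_\lambda\|_{L^r}=\lambda^{m-3/r}\|D^mu\|_{L^r}$ and $\|u_\lambda\|_{L^q}=\lambda^{-3/q}\|u\|_{L^q}$, so the only homogeneity under which the proposed bound can hold uniformly in $\lambda$ is $j-\tfrac3p=a\bigl(m-\tfrac3r\bigr)-(1-a)\tfrac3q$, which upon dividing by $-3$ is precisely the stated relation $\tfrac1p-\tfrac j3=a(\tfrac1r-\tfrac m3)+\tfrac{1-a}{q}$. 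This both justifies the hypothesis and shows that no competing scaling is admissible.

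For the inequality itself I would first treat the model case $j=0,\ m=1$ on $\mathbb{R}^3$. Writing each coordinate slice as $u(x)=\int_{-\infty}^{x_i}\partial_i u\,\d t$ and combining the three resulting one-dimensional integrals by Hölder gives the base estimate $\|u\|_{L^{3/2}}\le\prod_{i=1}^{3}\|\partial_i u\|_{L^1}^{1/3}$; applying this to $|u|^{s}$ and using Hölder bootstraps it to the Sobolev inequality $\|u\|_{L^{r^*}}\le C\|\nabla u\|_{L^r}$ with $\tfrac1{r^*}=\tfrac1r-\tfrac13$, and a final Hölder interpolation $\|u\|_{L^p}\le\|u\|_{L^{r^*}}^{a}\|u\|_{L^q}^{1-a}$ (valid exactly when $\tfrac1p=\tfrac a{r^*}+\tfrac{1-a}q$) yields the first-order inequality $\|u\|_{L^p}\le C\|\nabla u\|_{L^r}^{a}\|u\|_{L^q}^{1-a}$. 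The full $(j,m)$ statement then follows by Nirenberg's classical iteration: apply the $j=0,m=1$ version to successive derivatives and interpolate the resulting chain, exactly as in \cite{temam1,temam}. Finally I would transfer the whole-space inequality to the cube $Q=[0,L]^3$, and hence to $\mathbb{T}^3$, via a bounded extension operator $E:W^{m,r}(Q)\cap L^q(Q)\to W^{m,r}(\mathbb{R}^3)\cap L^q(\mathbb{R}^3)$ (for a cube an iterated reflection across the faces suffices): applying the $\mathbb{R}^3$ inequality to $Eu$ and restricting back gives $\|D^ju\|_{L^p(Q)}\le\|D^j(Eu)\|_{L^p(\mathbb{R}^3)}\le C\|D^m(Eu)\|_{L^r}^{a}\|Eu\|_{L^q}^{1-a}\le C\|D^mu\|_{L^r(Q)}^{a}\|u\|_{L^q(Q)}^{1-a}$.

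The hard part will be the whole-space base case together with the exceptional configurations. The delicate points are the borderline case in which $m-j-3/r$ is a nonnegative integer, where the estimate holds only for $a$ strictly below $1$ so that the stated range $\tfrac jm\le a\le1$ must be read with the right endpoint excluded, and the endpoint indices $r=\infty$ or $q=\infty$, each of which forces a separate treatment of the Hölder step. A secondary obstacle is arranging that the extension $E$ be simultaneously bounded in all three norms with a constant independent of $u$. On the torus one can bypass $E$ entirely by a Fourier/Littlewood--Paley proof, decomposing $u=\sum_N\Delta_N u$ into dyadic frequency shells and combining the Bernstein relations $\|D^j\Delta_N u\|_{L^p}\sim N^{j}\|\Delta_N u\|_{L^p}$ with interpolation in the shell index $N$; this merely trades the extension issue for paraproduct bookkeeping. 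Since the result is a standard interpolation inequality, in practice I would cite it from \cite{temam1,temam} and invoke the scaling-plus-extension outline above only to document self-containedness.
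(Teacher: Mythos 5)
The paper does not actually prove this lemma --- it is quoted as a standard fact from \cite{temam1,temam} --- so the only question is whether your outline is sound, and it has one genuine gap: the transfer from $\mathbb{R}^3$ to the torus. Your final chain ends with $\|D^m(Eu)\|_{L^r(\mathbb{R}^3)}\le C\|D^m u\|_{L^r(Q)}$, but no bounded extension operator $E:W^{m,r}(Q)\to W^{m,r}(\mathbb{R}^3)$ can satisfy this: boundedness of $E$ only gives control of $\|D^m(Eu)\|_{L^r(\mathbb{R}^3)}$ by the \emph{full} norm $\|u\|_{W^{m,r}(Q)}$, never by the top-order seminorm alone. Indeed such a bound is impossible, because the pure-seminorm inequality you are trying to prove is \emph{false} on $\mathbb{T}^3$ (or any bounded domain): take $u\equiv 1$, $j=0$ and $a>0$; then $D^m u=0$, so the right-hand side vanishes while $\|u\|_{L^p}>0$. (Your candidate counterexample hunt via scaling cannot detect this, since dilation does not act on the torus; the constant mode is precisely the scale the torus pins down.)

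The statement is therefore only correct on $\mathbb{T}^3$ either with an added lower-order term $C\|u\|_{L^q}$ on the right, or when restricted to mean-zero functions --- which is the setting in which this paper uses it, since every function involved lies in $H$ or $H^s$ and hence has $\int_{\mathbb{T}^3}u\,\d x=0$. The repair is standard: subtract the mean, note $\|u-\bar u\|_{L^q}\le C\|u\|_{L^q}$ for mean-zero data, and run your Fourier/Bernstein alternative (dyadic shells plus interpolation in the shell index), which works on the torus exactly because the zero frequency is absent for mean-zero functions; this route also avoids the extension operator entirely. Your list of delicate points (the borderline integer case $m-j-3/r\in\mathbb{N}_0$ forcing $a<1$, and the endpoints $r=\infty$, $q=\infty$) is accurate but misses this constant-function obstruction, which is the one difficulty that is genuinely specific to $\mathbb{T}^3$ rather than inherited from $\mathbb{R}^3$. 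The whole-space part of your outline (slicing, Loomis--Whitney/H\"older, Sobolev bootstrap, interpolation, Nirenberg iteration) is the standard argument and is fine as a sketch.
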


\begin{lemma}[$H$ bound]\label{lemma4.1}
Assume that  $f\in H$ and Assumption \ref{assum} hold.
Then there exist random variables  $T_1(\omega)$ and $\zeta_1(\omega)$,   where $\zeta_1$ is tempered, such that any
  solution $v$  of the system \eqref{2.2} corresponding to initial  values  $v(0) \in H $ satisfies
  \begin{align}
 \|v(t,\theta_{-t} \omega, v(0))\|^2
 \leq e^{-\lambda t}\|v(0)\|^2
 + \zeta_1(\omega) ,\quad t\geq T_1(\omega).  \nonumber
\end{align}
\end{lemma}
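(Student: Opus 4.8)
The plan is to run the standard $H$-energy estimate for \eqref{2.2}, using the antisymmetry of the nonlinearity to remove the leading advection term, to control the single surviving ``critical'' interaction between the Ornstein--Uhlenbeck factor $z(\theta_t\omega)$ and the solution by Assumption \ref{assum} and the Poincar\'e inequality \eqref{poin}, and finally to close the argument with Gronwall's lemma together with the ergodic law \eqref{erg}. First I would take the inner product of \eqref{2.2} with $v$ in $H$. Since $A$ is self-adjoint and $\langle A^{\frac54}v,v\rangle=\|v\|_{H^{\frac54}}^2$, this yields
\be
\frac12\frac{\d}{\d t}\|v\|^2+\nu\|v\|_{H^{\frac54}}^2
&=-\langle B(v+hz(\theta_t\omega)),v\rangle+\langle f,v\rangle\\
&\quad-\nu\langle A^{\frac54}hz(\theta_t\omega),v\rangle+\langle hz(\theta_t\omega),v\rangle.
\ee

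For the nonlinear term I would write $v=(v+hz(\theta_t\omega))-hz(\theta_t\omega)$ and use the divergence-free cancellation $\langle B(u,w),w\rangle=0$ together with antisymmetry; all contributions then vanish except the critical term $\langle B(v,hz(\theta_t\omega)),v\rangle$ and a higher-order remainder $\langle B(hz(\theta_t\omega),hz(\theta_t\omega)),v\rangle$. The critical term is estimated by
\ben
|\langle B(v,hz(\theta_t\omega)),v\rangle|\leq |z(\theta_t\omega)|\,\|\nabla h\|_{L^\infty}\,\|v\|^2 ,
\ee
which is precisely the quantity governed by Assumption \ref{assum}. The decisive bookkeeping choice is to keep this term as a multiple of $\|v\|^2$ rather than converting it to $\|v\|_{H^{\frac54}}^2$: because $f\in H$, $hz\in H$, $A^{\frac54}hz\in H$ (using $h\in H^{\frac{15}4}\subset H^{\frac52}$), and $B(hz,hz)\in H$, every remaining term $\langle f,v\rangle$, $\langle hz,v\rangle$, $\nu\langle A^{\frac54}hz,v\rangle$, $\langle B(hz,hz),v\rangle$ can be bounded by $\varepsilon\|v\|^2+C_\varepsilon(t)$ via Young's inequality, with $C_\varepsilon(t)$ depending only on $z(\theta_t\omega),f,h$. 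Thus the \emph{entire} dissipation $\nu\|v\|_{H^{\frac54}}^2$ remains available for the Poincar\'e step $\nu\|v\|_{H^{\frac54}}^2\geq\nu\lambda_1^{\frac54}\|v\|^2$, and I arrive at a differential inequality $\frac{\d}{\d t}\|v\|^2+a(t)\|v\|^2\leq b(t)$ with
\ben
a(t)=2\nu\lambda_1^{\frac54}-2|z(\theta_t\omega)|\,\|\nabla h\|_{L^\infty}-2\varepsilon .
\ee

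Applying Gronwall's lemma on $[0,t]$ with $\omega$ replaced by $\theta_{-t}\omega$ and changing variables $\tau\mapsto\tau-t$, the decay factor becomes $\exp\!\big(-\int_{-t}^0 a(\sigma)\,\d\sigma\big)$. By the ergodic identity \eqref{erg} with $m=1$ one has $\frac1t\int_{-t}^0|z(\theta_\sigma\omega)|\,\d\sigma\to \frac1{\sqrt\pi}$, so the mean of $a$ equals $2\nu\lambda_1^{\frac54}-\frac{2}{\sqrt\pi}\|\nabla h\|_{L^\infty}-2\varepsilon$, which is strictly positive for $\varepsilon$ small \emph{exactly} because Assumption \ref{assum} gives $\|\nabla h\|_{L^\infty}<\sqrt\pi\,\nu\lambda_1^{\frac54}$; this is where the sharp constant $\sqrt\pi$ is consumed. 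Consequently there is a random time $T_1(\omega)$ after which $\int_{-t}^0 a\geq\lambda t$, so the initial-data term decays like $e^{-\lambda t}\|v(0)\|^2$, while the convolution $\int_0^t e^{-\int_\eta^t a}\,b(\eta)\,\d\eta$ stays bounded by a tempered random variable $\zeta_1(\omega)$. The \emph{main obstacle} is precisely this final step: since $a(t)$ may be momentarily negative when $|z(\theta_t\omega)|$ is large, the exponential decay cannot be read off pointwise but must be extracted from the ergodic averaging of $|z|$; one must also verify that the forcing integral yields a genuinely \emph{tempered} $\zeta_1$ (not merely a finite one), which follows from the sublinear growth $|z(\theta_{-t}\omega)|/t\to0$ controlling $b(t)$ against the linearly growing exponent.
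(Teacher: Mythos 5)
Your proposal is correct and follows essentially the same route as the paper's proof: the same $H$-energy estimate with the divergence-free cancellation leaving exactly the critical term $(B(v,hz(\theta_t\omega)),v)$ and the remainder $(B(hz(\theta_t\omega),hz(\theta_t\omega)),v)$, the same pointwise bound $|z(\theta_t\omega)|\,\|\nabla h\|_{L^\infty}\|v\|^2$ kept as a multiple of $\|v\|^2$, Poincar\'e for coercivity, Gronwall, the ergodic identity \eqref{erg} with $m=1$ to exploit the sharp condition $\|\nabla h\|_{L^\infty}<\sqrt{\pi}\,\nu\lambda_1^{\frac54}$, and a tempered $\zeta_1$ built from the convolution integral. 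The only (immaterial for this statement) differences are that the paper reserves a fraction $\tfrac{\alpha\nu}{2}\|A^{\frac58}v\|^2$ of the dissipation, since it also needs the integrated $H^{\frac54}$ bound in later lemmas, whereas you spend all of it in the Poincar\'e step, and that you apply the ergodic averaging to the backward integral after the pullback substitution, which is in fact the cleaner order.
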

\begin{proof}
By Assumption 1 and \eqref{2.3}, $  {\|\nabla h\|_{L^\infty} }/{\sqrt \pi} < \nu  \lambda_1^{\frac{5}{4}}$, then there exist  $  \alpha \in  (0,1]$ and $\beta>0$  such that
\begin{align}
  \frac{\|\nabla h\|_{L^\infty} }{\sqrt \pi}  &=  (1- \alpha) \nu  \lambda_1^{\frac{5}{4}},  \label{c1} \\
   \frac{  \|\nabla h\|_{L^\infty}  }{ \sqrt{\pi}}  \left( 1 +  \beta \right )  &=   (\nu -\frac  1 2 \alpha \nu )\lambda_1^{\frac{5}{4}}  .  \label{c2}
  \end{align}
 Taking the inner product of  the  system   \eqref{2.2} in $H$ by $v$ and  integration by parts we deduce
\begin{align*}
\frac12\frac{\d}{\d t}\|v\|^2+ \nu  \|A^{\frac58}v\|^2
 =-(B(v+hz(\theta_t\omega)) ,v)  + \big (f- \nu A^{\frac54}hz(\theta_t\omega) +hz(\theta_t\omega),v\big).
  \end{align*}
Applying Poincar\'e's inequality \eqref{poin}, it can deduce that
\begin{align}
 & \frac{\d}{\d t}\|v\|^2+ \left(2-\frac \alpha 2\right)  \nu  \lambda_1^{\frac{5}{4}}  \| v\|^2  + \frac {\alpha \nu } 2   \|A^{\frac58}v\|^2 \nonumber  \\
 &\quad \leq \frac{\d}{\d t}\|v\|^2  + 2\nu  \|A^{\frac58}v\|^2 \nonumber\\
&\quad  \leq 2 \big |\big(B(v+hz(\theta_t\omega)) ,v \big) \big|
+2\big| \big (f- \nu A^{\frac{5}{4}}hz(\theta_t\omega) +hz(\theta_t\omega),v \big)\big|  . \label{4.3}
\end{align}
By H\"{o}lder's inequality and Young's  inequality, it can deduce that
\begin{align}
   2\big | \big(B(v+hz(\theta_t\omega)) ,v \big) \big|
&\leq 2 |z(\theta_t\omega)|\int_{\mathbb{T}^3}|v|^2|\nabla h|\ \d x+ 2|z(\theta_t\omega)|^2\int_{\mathbb{T}^3}|v||h||\nabla h|\ \d x\nonumber\\
&\leq 2 |z(\theta_t\omega)|\|\nabla h\|_{L^\infty}\|v\|^2+ 2|z(\theta_t\omega)|^2\|\nabla h\|_{L^\infty}\|v\|\|h\|\nonumber\\
&\leq 2 |z(\theta_t\omega)|\|\nabla h\|_{L^\infty}\|v\|^2+  \frac{\alpha \nu \lambda_1^{\frac{5}{4}}} {8} \|v\|^2+ C|z(\theta_t\omega)|^4\|\nabla h\|_{L^\infty}^2\|h\|^2 , \nonumber
\end{align}
and
\begin{align}
 2\big| \big (f-\nu A^{\frac{5}{4}}hz(\theta_t\omega) +hz(\theta_t\omega),v \big)\big|  \leq \frac{\alpha \nu \lambda_1^{\frac{5}{4}}}{8}\|v\|^2+C\|f\|^2
 + C|z(\theta_t\omega)|^2 \left(\|A^{\frac{5}{4}}h\|^2 + \|h\|^2\right) .\nonumber
\end{align}
Inserting the above two inequalities   into \eqref{4.3} yields
\ben
&
\frac{\d}{\d t}\|v\|^2+ \left(2-\frac {3 \alpha} 4\right) \nu \lambda_1^{\frac{5}{4}}  \| v\|^2  + \frac {\alpha \nu }  2 \|A^{\frac58}v\|^2  \\
 &\quad \leq 2|z(\theta_t\omega)|\|\nabla h\|_{L^\infty}\|v\|^2+ C |z(\theta_t\omega)|^4\|\nabla h\|_{L^\infty}^2\|h\|^2\nonumber\\
&\qquad + C\|f\|^2+ C |z(\theta_t\omega)|^2 \left (\|h\|^2+\|A^{\frac{5}{4}}h\|^2 \right)\nonumber\\
&\quad \leq 2|z(\theta_t\omega)|\|\nabla h\|_{L^\infty}\|v\|^2+C \left (1+|z(\theta_t\omega)|^4 \right).
\ee
Hence,
\begin{align*}
\frac{\d}{\d t}\|v\|^2+ \left[ \left(2-\frac{3\alpha} 4\right ) \nu \lambda_1^{\frac{5}{4}} -2\|\nabla h\|_{L^\infty}|z(\theta_t\omega)| \right ]
\|v\|^2+\frac{ \alpha \nu } 2\|A^{\frac58}v\|^2
\leq C  \left (1+|z(\theta_t\omega)|^4 \right)  .
\end{align*}
By the  Gronwall lemma, we have
\begin{align}
&\|v(t)\|^2+\frac {\alpha \nu } 2\int_0^te^{ \left(2-\frac{3\alpha} 4\right )\nu  \lambda_1^{\frac{5}{4}}(s-t)+2\|\nabla h\|_{L^\infty}\int_{s}^t|z(\theta_\tau\omega)|\d \tau}\|A^{\frac58}v(s)\|^2 \d s\nonumber\\
&\quad \leq e^{-\left(2-\frac{3\alpha} 4\right ) \nu  \lambda_1^{\frac{5}{4}}t+2\|\nabla h\|_{L^\infty}\int_{0}^t|z(\theta_\tau\omega)| \d \tau}\|v(0)\|^2  \nonumber \\
& \qquad
+C\int_0^te^{ \left(2-\frac{3\alpha} 4\right ) \nu \lambda_1^{\frac{5}{4}}(s-t)+2\|\nabla h\|_{L^\infty}\int_{s}^t|z(\theta_\tau\omega)|\d \tau}  \left (1+|z(\theta_s\omega)|^4 \right) \d s, \nonumber
\end{align}
and then
\begin{align}
&\|v(t)\|^2+\frac {\alpha \nu} 2\int_0^te^{ \left(2-\frac{3\alpha} 4\right ) \nu \lambda_1^{\frac{5}{4}}(s-t) }\|A^{\frac58}v(s)\|^2 \d s\nonumber\\
&\quad \leq e^{-\left(2-\frac{3\alpha} 4\right ) \nu \lambda_1^{\frac{5}{4}}t+2\|\nabla h\|_{L^\infty}\int_{0}^t|z(\theta_\tau\omega)| \d \tau}\|v(0)\|^2  \nonumber \\
& \qquad
+C\int_0^te^{ \left(2-\frac{3\alpha} 4\right ) \nu \lambda_1^{\frac{5}{4}}(s-t)+2\|\nabla h\|_{L^\infty}\int_{s}^t|z(\theta_\tau\omega)| \d \tau}  \left (1+|z(\theta_s\omega)|^4 \right)  \d s. \label{sep6.5}
\end{align}

By the above equality \eqref{erg} for $m=1$, it can deduce that
\[
\lim\limits_{t\rightarrow \pm \infty}\frac{1}{t}\int_0^t|z(\theta_s\omega)| \, \d s=\mathbb{E}(|z(\theta_t\omega)| )=\frac{ 1}{\sqrt{\pi}}
, \quad \omega\in \Omega,
\]
then there exists a  random variable  $ T_1(\omega)\geq1$ such that, for any $t\geq T_1(\omega)$,
 \[
 \frac 1t \int_{0}^t|z(\theta_s\omega)|\, \d s
 \leq  \frac{  1 }{\sqrt{\pi}} + \frac{\beta}{\sqrt{\pi}}     ,
\]
and
\begin{align}
2\|\nabla h\|_{L^\infty}\int_{0}^t|z(\theta_s\omega)|\, \d s &\leq
   \frac{2\|\nabla h\|_{L^\infty}  }{ \sqrt{\pi}}  \left( 1 +  \beta  \right ) t  \nonumber \\
   &= \left (2 \nu  -  \alpha \nu   \right)\lambda_1^{\frac{5}{4}} t \quad \text{(by \eqref{c2})} .\label{erg1}
\end{align}
Inserting \eqref{erg1} into  \eqref{sep6.5}, it yields
\begin{align}
&\|v(t)\|^2+\frac {\alpha \nu }2\int_0^te^{ \left(2-\frac{3\alpha} 4\right )\nu  \lambda_1^{\frac{5}{4}}(s-t) }\|A^{\frac58}v(s)\|^2 \d s\nonumber\\
&\quad \leq e^{ - \frac{\alpha} 4  \nu  \lambda_1^{\frac{5}{4}}t }\|v(0)\|^2
+C\int_0^te^{ \left(2-\frac{3\alpha} 4\right )\nu  \lambda_1^{\frac{5}{4}}(s-t)+2\|\nabla h\|_{L^\infty}\int_{s}^t|z(\theta_\tau\omega)| \d \tau}  \left (1+|z(\theta_s\omega)|^4 \right) \d s. \nonumber
\end{align}
Let   $\lambda:= \alpha \nu    \lambda_1^{\frac{5}{4}} /4$,
 this estimate is given as
\begin{align}
&\|v(t)\|^2+\frac {\alpha \nu } 2\int_0^te^{ \left( \frac 8 \alpha -3  \right) \lambda (s-t)}\|A^{\frac58}v(s)\|^2 \d s\nonumber\\
 &\quad\leq e^{-\lambda t}\|v(0)\|^2
 +C\int_0^te^{ \left( \frac 8 \alpha -3  \right) \lambda (s-t)+2\|\nabla h\|_{L^\infty}\int_{s}^t|z(\theta_\tau\omega)| \d \tau}  \left (1+|z(\theta_s\omega)|^4 \right)  \d s. \nonumber
\end{align}
Replacing $\omega$ with $\theta_{-t}\omega$ yields
 \begin{align}
&\|v(t, \theta_{-t}\omega, v(0))\|^2+\frac {\alpha  \nu }2\int_0^te^{ \left( \frac 8 \alpha -3  \right) \lambda (s-t)}\|A^{\frac58}v(s, \theta_{-t}\omega, v(0))\|^2 \d s\nonumber\\
 &\quad\leq e^{-\lambda t}\|v(0)\|^2
 +C\int^0_{-t} e^{ \left( \frac 8 \alpha -3  \right) \lambda  s +2\|\nabla h\|_{L^\infty}\int_{s}^0 |z(\theta_\tau\omega)| \d \tau}  \left (1+|z(\theta_s\omega)|^4 \right) \d s. \nonumber
\end{align}
Let a random variable be defined  by
\ben
 \zeta_1(\omega) :=C\int_{-\infty}^0  e^{ \left( \frac 8 \alpha -3  \right) \lambda  s +2\|\nabla h\|_{L^\infty}\int_{s}^0 |z(\theta_\tau\omega)| \d \tau}  \left (1+|z(\theta_s\omega)|^4 \right)   \d s,\quad \omega\in \Omega ,
\ee
then $\zeta_1(\cdot)$ is a tempered random variable such that  $\zeta_1(\omega)\geq 1$. Moreover, we deduce that
\begin{align}
&\|v(t,\theta_{-t} \omega, v(0))\|^2+ \frac {\alpha \nu } 2\int_0^t e^{ \left ( \frac{8}{\alpha} -3  \right) \lambda (s-t)}\|A^{\frac58}v(s, \theta_{-t}\omega, v(0))\|^2\, \d s \nonumber \\
 &\quad \leq e^{-\lambda t}\|v(0)\|^2
 + \zeta_1(\omega) ,\quad t\geq T_1(\omega).  \label{4.29}
\end{align}
This completes the proof of Lemma \ref{lemma4.1}.
\end{proof}

\begin{lemma}[$H^{\frac54}$ bound] \label{lem:H1bound}
Assume that  $f\in H$ and Assumption \ref{assum} hold.
For any bounded set $ B$ in $ H$, then there exists a random variable $ T_B(\omega)>T_1(\omega)$ such that,  for any $t\geq T_B(\omega)$,
\[
\sup_{v(0)\in B} \left( \big\|A^{\frac58}v(t,\theta_{-t}\omega,v(0))  \big\|^2 + \int_{t-\frac 12 }^t \|A^{\frac54}v(s,\theta_{-t}\omega,v(0))\|^2   \d s\right)
\leq  \zeta_2(\omega),
\]
here $\zeta_2(\omega)$  is defined by \eqref{mar8.8} such that $\zeta_2(\omega) > \zeta_1(\omega)\geq 1$, $ \omega\in\Omega$.
\end{lemma}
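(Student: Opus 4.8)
The plan is to derive an $H^{5/4}$ estimate by testing the abstract equation \eqref{2.2} with $A^{5/4}v$ and then applying the uniform Gronwall lemma, building directly on the $H$-bound \eqref{4.29} established in Lemma \ref{lemma4.1}. Concretely, I would take the inner product of \eqref{2.2} with $A^{5/4}v$ in $H$. The viscous term yields $\nu\|A^{5/4}v\|^2$, the time-derivative term gives $\tfrac12\frac{\d}{\d t}\|A^{5/8}v\|^2$, and the right-hand side forcing terms $(f-\nu A^{5/4}hz(\theta_t\omega)+hz(\theta_t\omega),A^{5/4}v)$ are handled by Cauchy--Schwarz and Young's inequality, absorbing a small multiple of $\|A^{5/4}v\|^2$ and leaving lower-order terms controlled by $\|f\|^2$ and powers of $|z(\theta_t\omega)|$ (this is where $h\in H^{15/4}$ is used, so that $A^{5/4}h\in H^{5/4}$ and $\|A^{5/4}\cdot A^{5/4}h\|=\|A^{5/2}h\|$ makes sense). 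The delicate contribution is the nonlinear term $(B(v+hz(\theta_t\omega)),A^{5/4}v)$.

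The main obstacle is precisely the estimation of the trilinear nonlinearity in $H^{5/4}$. I would split $B(v+hz)$ into the pure $B(v,v)$ part and the cross terms involving $hz(\theta_t\omega)$. For the genuinely nonlinear piece $(B(v,v),A^{5/4}v)$, the strategy is to use the Gagliardo--Nirenberg inequality stated in the excerpt together with the embeddings implied by $H^{5/4}\hookrightarrow L^\infty$ in three dimensions (since $5/4>3/2$ fails, but $H^{5/4}$ embeds into $L^p$ for suitable $p$; the critical exponent $\alpha=5/4$ is chosen exactly so that the dissipation controls the nonlinearity at this level). The aim is to bound $|(B(v,v),A^{5/4}v)|$ by $\varepsilon\|A^{5/4}v\|^2 + C(\|A^{5/8}v\|)\cdot(\text{lower order})$, where the prefactor involving $\|A^{5/8}v\|^2$ is exactly the quantity already known to be integrable in time from \eqref{4.29}. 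The cross terms with $hz(\theta_t\omega)$ are estimated similarly, using $\|\nabla h\|_{L^\infty}$ and higher norms of $h$ against suitable fractional norms of $v$, again absorbing a fraction of $\|A^{5/4}v\|^2$ into the left-hand side.

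Having arranged the differential inequality into the form
\[
\frac{\d}{\d t}\|A^{5/8}v\|^2 + \nu\|A^{5/4}v\|^2 \leq a(t)\|A^{5/8}v\|^2 + b(t),
\]
where $a(t)$ and $b(t)$ are explicit expressions in $|z(\theta_t\omega)|$, $\|f\|$, and norms of $h$, I would invoke the uniform Gronwall lemma over the time window $[t-\tfrac12,t]$. The input from Lemma \ref{lemma4.1} supplies two facts on this window for $t\geq T_B(\omega)$: first, $\int_{t-1/2}^t\|A^{5/8}v\|^2\,\d s$ is bounded (from the dissipation integral in \eqref{4.29}), and second, $\|A^{5/8}v(s)\|^2$ is integrable against the relevant weights, so the hypothesis of the uniform Gronwall lemma is met and the resulting bound is uniform over $v(0)\in B$. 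Replacing $\omega$ with $\theta_{-t}\omega$ and defining $\zeta_2(\omega)$ as the corresponding tempered random variable (the expression labelled \eqref{mar8.8}) then gives both the pointwise bound on $\|A^{5/8}v(t,\theta_{-t}\omega,v(0))\|^2$ and the integrated bound on $\int_{t-1/2}^t\|A^{5/4}v\|^2\,\d s$, with $\zeta_2(\omega)>\zeta_1(\omega)\geq1$ by construction. The temperedness of $\zeta_2$ follows from that of $\zeta_1$ and the temperedness of $|z(\theta_t\omega)|$, completing the proof.
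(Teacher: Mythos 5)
Your proposal is correct and takes essentially the same route as the paper: test \eqref{2.2} with $A^{5/4}v$, bound the trilinear term via the $H^{5/4}\hookrightarrow L^{12}$-type embeddings so that the resulting coefficient $C\bigl(\|A^{5/8}v\|^2+|z(\theta_t\omega)|^2\bigr)$ multiplying $\|A^{5/8}v\|^2$ is exactly the quantity made time-integrable by \eqref{4.29}, then close with a uniform-Gronwall argument on a unit-length window and replace $\omega$ by $\theta_{-t}\omega$ to define $\zeta_2$. The only (cosmetic) differences are that the paper estimates $B(v+hz,v+hz)$ as a single term instead of splitting off the cross terms, and carries out the uniform Gronwall step by hand, integrating the starting time over the sub-window $(t-1,t-\tfrac12)$.
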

\begin{proof}
Taking the inner product of the system \eqref{2.2} in $H$ by $A^{\frac54}v$ and  integration by parts we deduce
\begin{align}\label{4.6}
\frac12\frac{\d}{\d t}\|A^{\frac58}v\|^2+ \nu \|A^{\frac54}v\|^2&=-\big(B(v+hz(\theta_t\omega),v+hz(\theta_t\omega)), A^{\frac54}v \big)\nonumber\\
&\quad \ +\big(f-\nu A^{\frac54}hz(\theta_t\omega) +hz(\theta_t\omega),A^{\frac54}v \big).
\end{align}
 Applying the H\"{o}lder inequality,  the Sobolev embedding inequality,  and the Young inequality, we deduce
\begin{align}\label{4.4}
&\big| \big(B(v+hz(\theta_t\omega),v+hz(\theta_t\omega)), A^{\frac{5}{4}}v \big)\big|
 \nonumber\\
& \quad \leq \|A^{\frac{5}{4}}v\|\|v+hz(\theta_t\omega)\|_{L^{12}}
\|A^{\frac12}(v+hz(\theta_t\omega))\|_{L^{\frac{12}{5}}}\nonumber\\
& \quad \leq C\|A^{\frac{5}{4}}v\|\|A^{\frac{5}{8}}(v+hz(\theta_t\omega))\|\|A^{\frac{5}{8}}(v+hz(\theta_t\omega))\|\nonumber\\
& \quad \leq \frac{\nu}{4}\|A^{\frac{5}{4}}v\|^2+C\|A^{\frac{5}{8}}(v+hz(\theta_t\omega))\|^2\|A^{\frac{5}{8}}(v+hz(\theta_t\omega))\|^2
\nonumber\\
& \quad \leq \frac{\nu}{4}\|A^{\frac{5}{4}}v\|^2+C(\|A^{\frac{5}{8}}v\|^2+\|A^{\frac{5}{8}}h\|^2|z(\theta_t\omega)|^2)
(\|A^{\frac{5}{8}}v\|^2+\|A^{\frac{5}{8}}h\|^2|z(\theta_t\omega)|^2)\nonumber\\
& \quad \leq \frac{\nu}{4}\|A^{\frac{5}{4}}v\|^2+C(\|A^{\frac{5}{8}}v\|^2+|z(\theta_t\omega)|^2)
\|A^{\frac{5}{8}}v\|^2+C|z(\theta_t\omega)|^4,
\end{align}
and
\begin{align}
&\big (f-\nu A^{\frac{5}{4}}hz(\theta_t\omega) +hz(\theta_t\omega),A^{\frac{5}{4}}v \big)\nonumber\\
& \leq \frac  \nu 4 \|A^{\frac{5}{4}}v\|^2+C\|f\|^2+ C  \|A^{\frac{5}{4}}h\|^2|z(\theta_t\omega)|^2 + C  \|h\|^2|z(\theta_t\omega)|^2\nonumber\\
&\leq  \frac  \nu 4 \|A^{\frac{5}{4}}v\|^2+C\|f\|^2+ C  |z(\theta_t\omega)|^2 .\label{4.5}
\end{align}
Inserting \eqref{4.4} and \eqref{4.5} into \eqref{4.6} we deduce that
\begin{align}\label{4.27}
\frac{\d}{\d t}\|A^{\frac58}v\|^2 +  \nu  \|A^{\frac54}v\|^2
&\leq  C\left ( \|A^{\frac58}v\|^2+ |z(\theta_t\omega)|^2 \right)\|A^{\frac58}v\|^2  +
C \left(1 +|z(\theta_t\omega)|^4+\|f\|^2 \right)\nonumber \\
&\leq  C\left ( \|A^{\frac58}v\|^2+ |z(\theta_t\omega)|^2 \right)\|A^{\frac58}v\|^2  +
C \left(1 +|z(\theta_t\omega)|^4 \right).
\end{align}
By using Gronwall's lemma on $(\eta, t)$ with $\eta\in  (t-1,t )$, for $t >1$, it yields
\be
&\|A^{\frac58}v(t)\|^2
+ \nu \int_\eta ^te^{ \int_s^tC(\|A^{\frac{5}{8}}v\|^2+|z(\theta_\tau \omega)|^2) \d \tau}\|A^{\frac54} v(s)\|^2  \d s \nonumber\\
&\quad \leq e^{ \int_\eta^tC(\|A^{\frac{5}{8}}v\|^2+|z(\theta_\tau\omega)|^2) \d \tau}\|A^{\frac58}v( \eta)\|^2 +C\int_\eta^te^{ \int_s^t C(\|A^{\frac{5}{8}}v\|^2+ |z (\theta_\tau\omega)|^2) \d \tau} \left (1 +|z(\theta_s\omega)|^4 \right)\d s,
\ee
and then integrating over $\eta\in (t-1,t-\frac 12) $ yields
\be
& \frac12  \|A^{\frac58}v(t )\|^2
+\frac  \nu 2\int_{t-\frac 12 }^t e^{ \int_s^tC(\|A^{\frac{5}{8}}v\|^2+|z(\theta_\tau\omega)|^2) \d \tau}\|A^{\frac54}v(s)\|^2 \d s \nonumber\\
&\quad \leq \int_{t-1}^{t-\frac 12}  e^{ \int_\eta^tC(\|A^{\frac{5}{8}}v\|^2+|z(\theta_\tau\omega)|^2) \d \tau} \|A^{\frac58}v( \eta)\|^2  \d \eta
 \nonumber\\
&\qquad +C\int_{t-1}^te^{ \int_s^t C(\|A^{\frac{5}{8}}v\|^2+ |z (\theta_\tau\omega)|^2) \d \tau} \left (1 +|z(\theta_s\omega)|^4 \right) \d s \nonumber\\
&\quad \leq Ce^{   C \int_{ t-1}^t (\|A^{\frac{5}{8}}v\|^2+|z(\theta_\tau\omega)|^2)  \d \tau}  \left[  \int_{t-1}^{t  }  \|A^{\frac58}v( \eta)\|^2  \d \eta  + \int_{t-1}^t \left (1 +|z(\theta_s\omega)|^4 \right)\d s\right]  .
\ee
Replacing $\omega$ with $\theta_{-t}\omega$, it is easy to get
\be
&   \|A^{\frac58}v(t,\theta_{-t}\omega, v(0) )\|^2\nonumber\\
& +   \nu \int_{t-\frac 12 }^te^{  C\int_{s}^t\|A^{\frac{5}{8}}v(\tau ,\theta_{-t}\omega, v(0))\|^2\d \tau+C\int_{s-t}^0|z(\theta_\tau\omega)|^2 \d \tau}\|A^{\frac54} v(s ,\theta_{-t}\omega, v(0))\|^2 \d s\nonumber\\
&\leq  C e^{ C \int_{t-1}^t(\|A^{\frac{5}{8}}v(\tau ,\theta_{-t}\omega, v(0))\|^2+ |z (\theta_{\tau-t}\omega)|^2) \d \tau}\nonumber\\
&\cdot\left [ \int_{t-1}^t   \|A^{\frac58}v(s,\theta_{-t}\omega, v(0) )\|^2\d s+\int_{t-1}^t \left(1+ |z(\theta_{s-t} \omega)|^4  \right)  \d s\right]\nonumber\\
& =  C e^{ C \int_{t-1}^t\|A^{\frac{5}{8}}v(\tau ,\theta_{-t}\omega, v(0))\|^2\d \tau+C \int_{-1}^0 |z (\theta_{\tau}\omega)|^2 \d \tau}\nonumber\\
&\cdot\left [ \int_{t-1}^t   \|A^{\frac58}v(s,\theta_{-t}\omega, v(0) )\|^2\d s+\int_{-1}^0 \left(1+ |z(\theta_{s} \omega)|^4  \right)  \d s\right].
\ee
By  virtue of the Lemma   \ref{lemma4.1}, it can deduce that
\be
 \nu  \int_{t-1}^t  \|A^{\frac58}v(s,\theta_{-t}\omega, v(0) )\|^2  \d s
 & \leq   \nu e^{\left( \frac 8\alpha -3 \right)  \lambda }   \int_{t-1}^t  e^{\left( \frac 8\alpha -3 \right)  \lambda (s-t) } \|A^{\frac58}v(s,\theta_{-t}\omega, v(0) )\|^2  \d s
 \\
 & \leq  \frac{2}{\alpha }   e^{\left( \frac 8\alpha -3 \right)  \lambda }   \left( e^{-\lambda t}\|v(0)\|^2
 +   \zeta_1(\omega)\right) , \quad    t\geq T_1(\omega) . \nonumber
\ee
Hence, we deduce that for $t\geq T_1(\omega) $
\be
&   \|A^{\frac58}v(t,\theta_{-t}\omega, v(0) )\|^2
 +  \nu \int_{t-\frac 12 }^t  \|Av^{\frac54} (s ,\theta_{-t}\omega, v(0))\|^2 \d s \nonumber\\
 & \leq  \|A^{\frac58}v(t,\theta_{-t}\omega, v(0) )\|^2\nonumber\\
& +   \nu \int_{t-\frac 12 }^te^{  C\int_{s}^t\|A^{\frac{5}{8}}v(\tau ,\theta_{-t}\omega, v(0))\|^2\d \tau+C\int_{s-t}^0|z(\theta_\tau\omega)|^2 \d \tau}\|A^{\frac54} v(s ,\theta_{-t}\omega, v(0))\|^2 \d s \nonumber\\
& \leq   C e^{ C\int_{t-1}^t\|A^{\frac{5}{8}}v(\tau ,\theta_{-t}\omega, v(0))\|^2\d \tau+C \int_{-1}^0 |z (\theta_\tau\omega)|^2 \d \tau} \left(   e^{-\lambda t}\|v(0)\|^2
 +   \zeta_1(\omega)  +
\int_{-1}^0 |z(\theta_{s} \omega)|^4 \d s \right) \nonumber\\
& \leq   C e^{ C(e^{-\lambda t}\|v(0)\|^2
 +   \zeta_1(\omega))+C \int_{-1}^0 |z (\theta_\tau\omega)|^2 \d \tau} \left(   e^{-\lambda t}\|v(0)\|^2
 +   \zeta_1(\omega)  +
\int_{-1}^0 |z(\theta_{s} \omega)|^4 \d s \right) ,
\ee
here  we have used  $\zeta_1(\omega)\geq 1$. Let
\be \label{mar8.8}
 \zeta_2(\omega) :=   C e^{ C(\zeta_1(\omega) +\int_{-1}^0 |z (\theta_\tau\omega)|^2 \d \tau)} \left( \zeta_1(\omega)  +
\int_{-1}^0 |z(\theta_{s} \omega)|^4   \d s \right) ,
\quad \omega\in \Omega,
\ee
then $\zeta_2(\omega)$ is a tempered random variable such that $\zeta_2(\omega) > \zeta_1(\omega)\geq 1$.  This completes the proof of Lemma \ref{lem:H1bound}.

\end{proof}

Let a random set $\mathfrak{B}=\{\mathfrak{B}(\omega)\}_{\omega\in\Omega}$ in $H^{\frac54}$ be given by
\begin{align}\label{4.19}
\mathfrak{B}(\omega):= \left \{ v \in H^{\frac{5}{4}}:\|A^{\frac58}v\|^2\leq \zeta_2(\omega)
\right \}, \quad \omega\in\Omega,
\end{align}
where $\zeta_2(\cdot)$ is given by \eqref{mar8.8}. By the above Lemma \ref{lem:H1bound}, then  $\mathfrak{B}$ is a  random absorbing set of the RDS $\phi$ generated by the random fractional three-dimensional  NS equations  \eqref{2.2}.  Since  the compact embedding $H^{\frac{5}{4}} \hookrightarrow H$, we show the compactness of $\mathfrak B(\omega)$ in $H$, then the RDS $\phi$ has a    random attractor $\A$ in $H$. Inspired by the \cite{langa2}, we give the random attractor has a finite fractal dimension in $H$ as the following the theorem.

\begin{theorem}\label{theorem4.6}  Let Assumption \ref{assum} hold and $f\in H$. The  RDS $\phi$ generated by  the random fractional three-dimensional NS equations \eqref{2.2} has a  random absorbing set $\mathfrak{B}$ which is a tempered and bounded random set in $H^{\frac{5}{4}} $, and has also a   random attractor $\mathcal A $ in $H$. Moreover, $\mathcal A$ has a finite fractal dimension in $H$:
\[
d_f^H(\A(\omega))  \leq d,\quad \omega \in \Omega,
\]
for some  $d>0$.
\end{theorem}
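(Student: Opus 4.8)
The plan is to establish the three assertions of the theorem in turn: the random absorbing set in $H^{\frac54}$, the existence of the random attractor $\mathcal A$ in $H$, and its finite fractal dimension in $H$. The first two are essentially packaged by the preceding lemmas. By Lemma \ref{lem:H1bound} the set $\mathfrak B$ defined in \eqref{4.19} pullback absorbs every tempered set, and since its radius is governed by the tempered variable $\zeta_2$ of \eqref{mar8.8}, $\mathfrak B$ is a tempered, bounded random set in $H^{\frac54}$. The compact embedding $H^{\frac54}\hookrightarrow H$ makes each $\mathfrak B(\omega)$ relatively compact in $H$, which yields the asymptotic compactness of $\phi$ in $H$; applying the standard criterion (Lemma \ref{lem:cui18} with $X=Y=H$) then gives a unique random attractor $\mathcal A$ in $H$ with $\mathcal A(\omega)\subset \mathfrak B(\omega)$, so in particular $\mathcal A$ is bounded in $H^{\frac54}$.

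For the finite fractal dimension I would follow the smoothing-and-covering scheme of \cite{langa2}. The one analytic ingredient required is a local $(H,H^{\frac54})$-\emph{smoothing} estimate for the time-$T$ map: for a fixed $T>0$ there should be a tempered random variable $\kappa(\omega)$ such that, for all $u,v\in\mathfrak B(\theta_{-T}\omega)$,
\[
\big\|A^{\frac58}\big(\phi(T,\theta_{-T}\omega,u)-\phi(T,\theta_{-T}\omega,v)\big)\big\|\le \kappa(\omega)\,\|u-v\| .
\]
I would derive this from the equation for the difference $w=v_1-v_2$ of two solutions of \eqref{2.2} sharing the same noise, in which the forcing and noise terms cancel and leave $\partial_t w+\nu A^{\frac54}w=-B(v_1+hz,w)-B(w,v_2+hz)$; testing successively with $w$ and with $A^{\frac54}w$ and closing by Gronwall with the uniform $H^{\frac54}$-bound and the $L^2(t-\tfrac12,t;H^{\frac52})$-bound of Lemma \ref{lem:H1bound} controls $\|A^{\frac58}w(T)\|$. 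This is precisely the local Lipschitz continuity proved in Section \ref{sec5}, which I may therefore invoke.

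With the smoothing estimate in hand, the covering argument proceeds as in \cite{langa2,robinson11}. Using the invariance $\mathcal A(\omega)=\phi(T,\theta_{-T}\omega,\mathcal A(\theta_{-T}\omega))$, any $H$-cover of $\mathcal A(\theta_{-T}\omega)$ by balls of radius $\varepsilon$ is transported by $\phi(T,\theta_{-T}\omega,\cdot)$ into finitely many sets of $H^{\frac54}$-diameter at most $2\kappa(\omega)\varepsilon$; each such set, being bounded in $H^{\frac54}$, is re-covered in $H$ by a fixed finite number $M(\omega)=N\!\big(B_{H^{\frac54}},\,c/\kappa(\omega)\big)$ of balls, the count being finite exactly because $H^{\frac54}\hookrightarrow H$ is compact. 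Iterating one step of the cocycle gives a recursion $N(\mathcal A(\omega),2^{-(k+1)})\le M(\omega)\,N(\mathcal A(\theta_{-T}\omega),2^{-k})$, and feeding this into the definition of $d_f^H$ yields $d_f^H(\mathcal A(\omega))\le \log M(\omega)/\log 2$; temperedness of $\kappa$ bounds $M(\omega)$ by a deterministic constant, giving $d_f^H(\mathcal A(\omega))\le d$ for all $\omega$.

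The main obstacle is the smoothing estimate, and specifically the requirement that $\kappa$ be \emph{tempered} rather than merely finite: the Gronwall factor controlling $\|A^{\frac58}w(T)\|$ is an exponential of $\int_0^T(\|A^{\frac58}v_i\|^2+|z(\theta_s\omega)|^2)\,\d s$, so I must bound these time-integrals uniformly over $\mathfrak B$ via Lemma \ref{lem:H1bound} and control their pullback growth through the temperedness and the ergodic identities \eqref{erg} of the Ornstein--Uhlenbeck process. Because $f\in H$ only, this scheme cannot be pushed beyond $H^{\frac54}$, which is why the dimension is obtained here in $H$; the sharper $H^{\frac52}$ statement requires the indirect comparison argument of Sections \ref{sec4} and \ref{sec6}.
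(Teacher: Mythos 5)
Your construction of the absorbing set and of the attractor coincides with the paper's own (very brief) argument: $\mathfrak B$ is exactly the set \eqref{4.19}, whose absorption and temperedness come from Lemma \ref{lem:H1bound}; compactness of $\mathfrak B(\omega)$ in $H$ follows from the compact embedding $H^{\frac54}\hookrightarrow H$; and the attractor in $H$ then exists by the standard criterion. For the dimension, the paper likewise appeals to the smoothing-and-covering scheme of \cite{langa2}, the smoothing input being in substance the local $(H,H^{\frac54})$-Lipschitz estimates of Section \ref{sec5} (Lemma \ref{lemma51} and Lemma \ref{5.10}). One caveat you handle correctly: those lemmas are stated at random times $\tau_\omega$, $T_1(\mathfrak D,\omega)$, whereas the covering iteration needs the time-$T$ map for a fixed deterministic $T$; your direct Gronwall derivation over $\mathfrak B$ at fixed $T$ is the right way to supply this.

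However, the last step of your covering argument is genuinely wrong as stated: temperedness of $\kappa(\cdot)$ does \emph{not} bound $M(\omega)=N\big(B_{H^{\frac54}},\,c/\kappa(\omega)\big)$ by a deterministic constant. Tempered random variables are in general unbounded in $\omega$ (so are $\zeta_1,\zeta_2$ here, hence so is your $\kappa$, which is an exponential of $\zeta_2(\theta_{-T}\omega)$ plus time-integrals of $|z|^2$); temperedness only controls the growth of $\kappa(\theta_{-t}\omega)$ in $t$ along each fixed $\omega$. Consequently a single application of your recursion $N(\mathcal A(\omega),2^{-(k+1)})\le M(\omega)\,N(\mathcal A(\theta_{-T}\omega),2^{-k})$ yields only the \emph{random} bound $\log M(\omega)/\log 2$, not the deterministic $d$ asserted in the theorem. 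The deterministic bound in \cite{langa2} (and in Lemma 5 of \cite{cui}, which the paper invokes for the $H^{\frac52}$ statement) is obtained differently: one iterates the recursion $k$ times, producing the factor $\prod_{j=0}^{k-1}M(\theta_{-jT}\omega)$, and then controls the Ces\`aro averages $\frac1k\sum_{j=0}^{k-1}\log M(\theta_{-jT}\omega)$ by Birkhoff's ergodic theorem (using ergodicity of the Wiener shift); this requires the integrability hypothesis $\mathbb{E}\left[\log M\right]<\infty$, equivalently a moment bound on $\log^+\kappa$, which must be verified and is \emph{not} a consequence of temperedness. Temperedness is indeed needed, but elsewhere: it ensures that the starting radius ${\rm diam}_H\,\mathcal A(\theta_{-kT}\omega)$ of the $k$-th iteration grows subexponentially in $k$, so that it does not affect the limit defining $d_f^H$. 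To complete your proof you should therefore (i) replace the quoted sentence by this ergodic-theorem argument, and (ii) check $\mathbb{E}[\log^+\kappa]<\infty$ for your Gronwall constant --- a nontrivial point, since $\log\kappa$ is of the order of $\zeta_2(\theta_{-T}\omega)$, whose integrability does not follow from its temperedness.
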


And then, in Section \ref{sec4}, Section \ref{sec5} and Section \ref{sec6}, we will prove that this attractor $\A$ is in fact an $(H,H^\frac52)$-random attractor of $\phi$.

\section{Construction of  an $H^{\frac{5}{2}}$ random absorbing set}\label{sec4}

In this section, we will construct $H^{\frac{5}{2}}$-distance between random and deterministic trajectories and an  $H^\frac52$ random absorbing set of system \eqref{2.2}.   By estimating  the difference between the   solutions of the random  equations \eqref{2.2} and that of the  deterministic  equations \eqref{2.1} within the global attractor $\A_0$, we will get the main result, see Remark \ref{rmk1}.  The comparison approach seems first used in \cite{cui}.

\subsection{$H^{\frac{5}{2}}$-distance between random and deterministic trajectories}

By the Theorem \ref{theorem4.6},  we have that the random fractional three-dimensional NS equations \eqref{2.2}  have an $H^\frac54$ random absorbing set $\mathfrak B$. Meanwhile, by the Lemma \ref{lem:det}, we have that  the deterministic  fractional three-dimensional NS equations  \eqref{2.1} have a global attractor $\mathcal A_0$ bounded in $H^\frac52$, By constructing the connection between  the random absorbing set  $\mathfrak B$ and the  global attractor $\mathcal A_0$, we will prove the Lemma \ref{lemma4.3} and the Lemma \ref{lem:H2bd}.
  Note that    the $\omega$-dependence  of each  random time in the following estimates will be crucial for later analysis.

\begin{lemma}[$H^{\frac54}$-distance]
\label{lemma4.3}
 Let Assumption \ref{assum} hold and $f\in H$. There   exist   random variables $T_{\mathfrak B}(\cdot)$ and    $\zeta_4(\cdot)$ such that the solutions $v$ of the random fractional three-dimensional NS equations \eqref{2.2} and the solutions $u$  of the deterministic  fractional three-dimensional NS equations \eqref{2.1} satisfy
\begin{align}
 \big \|A^{\frac 58} v(t,\theta_{-t}\omega,v(0))-A^{\frac 58} u(t,u(0)) \big\| ^2 \leq \zeta_4(\omega), \nonumber
\end{align}
for any $t\geq T_{\mathfrak B}(\omega)$ and $\omega\in \Omega$,  $v(0)\in\mathfrak{B}(\theta_{-t}\omega)$
 and $u(0)\in\mathcal{A}_0$.
\end{lemma}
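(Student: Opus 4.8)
The plan is to track the difference $w := v - u$, where $v$ solves the random equation \eqref{2.2} with $v(0)\in\mathfrak B(\theta_{-t}\omega)$ and $u$ solves the deterministic equation \eqref{2.1} with $u(0)\in\mathcal A_0$, and to show that its $H^{\frac54}$-norm $\|A^{\frac58}w\|$ is asymptotically bounded by a tempered random variable. Subtracting \eqref{2.1} from \eqref{2.2} and abbreviating $g:=hz(\theta_t\omega)$, the difference obeys
\[
\frac{\d w}{\d t}+\nu A^{\frac54}w + B(u+w+g)-B(u) = -\nu A^{\frac54}g + g,
\]
and the crucial algebraic step is the bilinear expansion
\[
B(u+w+g)-B(u)=B(u,w+g)+B(w+g,u)+B(w+g,w+g),
\]
which isolates the smooth factor $u$ (controlled in $H^{\frac52}$ on $\mathcal A_0$) from the genuinely random part $w+g$.

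First I would take the inner product of the $w$-equation with $A^{\frac54}w$, producing $\tfrac12\tfrac{\d}{\d t}\|A^{\frac58}w\|^2+\nu\|A^{\frac54}w\|^2$ on the left. For the three trilinear terms I would reuse the H\"older/Sobolev chain already exploited in \eqref{4.4}, i.e.\ the three-dimensional embeddings $\|a\|_{L^{12}}\le C\|A^{\frac58}a\|$ and $\|A^{\frac12}b\|_{L^{\frac{12}{5}}}\le C\|A^{\frac58}b\|$, which yield the master estimate $|(B(a,b),A^{\frac54}c)|\le C\|A^{\frac58}a\|\,\|A^{\frac58}b\|\,\|A^{\frac54}c\|$. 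Applying it with $c=w$ and invoking Young's inequality lets me absorb every factor $\|A^{\frac54}w\|$ into the viscous term $\nu\|A^{\frac54}w\|^2$; the linear forcing $(-\nu A^{\frac54}g+g,A^{\frac54}w)$ is treated the same way, using $h\in H^{\frac{15}{4}}\subset H^{\frac52}$ so that $\|A^{\frac54}h\|<\infty$. After expanding $\|A^{\frac58}(w+g)\|^2\le 2\|A^{\frac58}w\|^2+2|z(\theta_t\omega)|^2\|A^{\frac58}h\|^2$, this produces a differential inequality of the form
\begin{align*}
\frac{\d}{\d t}\|A^{\frac58}w\|^2+\nu\|A^{\frac54}w\|^2
&\le C\big(\|A^{\frac58}u\|^2+\|A^{\frac58}w\|^2+|z(\theta_t\omega)|^2\big)\|A^{\frac58}w\|^2+F(t),
\end{align*}
where $F$ collects terms built from $\|A^{\frac58}u\|^2$, $|z(\theta_t\omega)|^2$, $|z(\theta_t\omega)|^4$ and the fixed quantities $\|A^{\frac58}h\|,\|A^{\frac54}h\|$.

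The decisive point is that a \emph{pointwise} Gronwall argument is hopeless here, since the coefficient contains $\|A^{\frac58}w\|^2$ and the inequality is of Riccati type. Instead I would run the \emph{uniform} Gronwall procedure exactly as in Lemma \ref{lem:H1bound}: integrate on $(\eta,t)$ with $\eta\in(t-1,t)$ and then average $\eta$ over $(t-1,t-\tfrac12)$. This is legitimate because the exponential weight $\exp\!\big(\int_s^t C(\|A^{\frac58}u\|^2+\|A^{\frac58}w\|^2+|z(\theta_\tau\omega)|^2)\,\d\tau\big)$ is finite: by invariance the deterministic trajectory $u(s)=S(s)u(0)$ stays in $\mathcal A_0$, which is bounded in $H^{\frac52}$ (Lemma \ref{lem:det}), so $\|A^{\frac58}u(s)\|\le C_0$ uniformly on $[0,\infty)$; and the time integrals $\int_{t-1}^t\|A^{\frac58}v\|^2\,\d\tau$ are controlled by $e^{-\lambda t}\|v(0)\|^2+\zeta_1(\omega)$ via \eqref{4.29}, whence $\int_{t-1}^t\|A^{\frac58}w\|^2\,\d\tau\le 2\int_{t-1}^t(\|A^{\frac58}v\|^2+\|A^{\frac58}u\|^2)\,\d\tau$ is bounded too. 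The same integrals dominate the averaged initial data $\int_{t-1}^{t-1/2}\|A^{\frac58}w(\eta)\|^2\,\d\eta$, so no information on $w(0)$ beyond membership in $\mathfrak B(\theta_{-t}\omega)$ and $\mathcal A_0$ is required.

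Finally I would replace $\omega$ by $\theta_{-t}\omega$, send the transient terms $e^{-\lambda t}\|v(0)\|^2$ to zero for $t\ge T_{\mathfrak B}(\omega)$ (taking $T_{\mathfrak B}\ge T_B$ from Lemma \ref{lem:H1bound}), and define $\zeta_4$ in the spirit of \eqref{mar8.8},
\[
\zeta_4(\omega):=C\,e^{\,C\big(\zeta_1(\omega)+C_0+\int_{-1}^0|z(\theta_\tau\omega)|^2\,\d\tau\big)}\Big(\zeta_1(\omega)+C_0+\int_{-1}^0\big(1+|z(\theta_s\omega)|^4\big)\,\d s\Big),
\]
with $C_0$ bounding the $u$-contributions; this is tempered because $|z(\theta_\cdot\omega)|$ and $\zeta_1$ are. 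The main obstacle I anticipate is the bookkeeping of the cubic and quartic nonlinear contributions so that (i) every top-order factor $\|A^{\frac54}w\|$ is absorbed into the dissipation, and (ii) the surviving superlinear factor $\|A^{\frac58}w\|^2$ is handled as an \emph{integrable-in-time coefficient} rather than as forcing — it is exactly the a priori $L^2_{\mathrm{loc}}H^{\frac54}$ control of $v$ together with the uniform $H^{\frac52}$ bound of $u$ on $\mathcal A_0$ that make this Riccati-type inequality close.
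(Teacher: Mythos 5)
Your proposal is correct, and it reaches the conclusion by a genuinely different route in the core estimate. The paper's proof of Lemma \ref{lemma4.3} splits the nonlinearity as $B(v+hz)-B(u)=B(w+hz,u)+B(v+hz,w+hz)$ and estimates the second term with $\|A^{\frac 58}v\|$ in the coefficient slot (see \eqref{4.9}), so that the resulting differential inequality \eqref{sep6.1} is \emph{linear} in $\|A^{\frac58}w\|^2$ but carries the quartic forcing $\|A^{\frac58}v\|^4$; to integrate that forcing over unit windows the paper must first establish the window-sup bound \eqref{mar8.3} (the tempered variable $\zeta_3$), obtained through the $\varepsilon$-shifted pullback argument, which is the most technical part of its proof. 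You instead expand $B(u+w+g)-B(u)=B(u,w+g)+B(w+g,u)+B(w+g,w+g)$, which places the quadratic difficulty on $w$ itself: your inequality is of Riccati type, but, as you correctly observe, it closes by the integrate-and-average Gronwall procedure because the dangerous factor $\|A^{\frac58}w\|^2$ enters only as a time-integrable coefficient, controlled through $\int_{t-1}^t\|A^{\frac58}w\|^2\,\d\tau\le 2\int_{t-1}^t\|A^{\frac58}v\|^2\,\d\tau+2C_0^2$, i.e.\ by \eqref{4.29} together with the $H^{\frac52}$-boundedness (hence $H^{\frac54}$-boundedness) of the invariant set $\mathcal A_0$; the solution-dependent-coefficient Gronwall step is precisely the device the paper already employs for \eqref{4.27} in Lemma \ref{lem:H1bound}, so it is legitimate here, and your choice of $T_{\mathfrak B}$ plays the same role as \eqref{timeB}. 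What each approach buys: yours is shorter and needs only \eqref{4.29} and the temperedness of the radius of $\mathfrak B$, entirely bypassing $\zeta_3$; the paper's heavier route is not wasted, however, because \eqref{mar8.3} and its corollaries such as \eqref{mar8.2} are reused in Lemma \ref{lem:H2bd} and Lemma \ref{lemma4.4}, so they must be proved anyway for the later steps. One cosmetic caveat: your $\zeta_4$ involves $e^{C\zeta_1(\omega)}$, whose temperedness is asserted rather than verified, but this is exactly the convention the paper itself adopts for $\zeta_2$ and its own $\zeta_4$, so you are on equal footing there.
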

\begin{proof}
Assume that  $w=v-u$ is the difference between the  solutions $v$ of random system \eqref{2.2} and the  solutions $u$ of deterministic system \eqref{2.1}, respectively. Then we have
\begin{align}\label{4.8}
\frac{\d w}{\d t}+ \nu  A^{\frac 54}w+B( v+hz(\theta_t\omega))-B(u) = hz(\theta_t\omega) - \nu A^{\frac 54}hz(\theta_t\omega).
\end{align}
Taking the inner product of the system  \eqref{4.8} in $H$ by $A^{\frac 54}w$ and  integration by parts, it can deduce that
\begin{align}\label{4.11}
 &\frac12\frac{\d}{\d t}\|A^{\frac58}w\|^2+ \nu \|A^{\frac 54}w\|^2\nonumber \\
&= \big (hz(\theta_t\omega)-\nu A^{\frac 54}hz(\theta_t\omega) ,A^{\frac 54}w \big )  - \big(B(v+hz(\theta_t\omega))-B(u),A^{\frac 54}w\big) \nonumber \\
&= \big  (hz(\theta_t\omega)-\nu A^{\frac 54}hz(\theta_t\omega) ,A^{\frac 54}w \big ) - \big (B(w+hz(\theta_t\omega),u),A^{\frac 54}w \big ) \nonumber\\
& -  \big (B(v+hz(\theta_t\omega),w+hz(\theta_t\omega)),A^{\frac 54}w  \big ) .
\end{align}
For the first term on the right hand of \eqref{4.11}, by using H\"{o}lder's inequality and Young's inequality, it can deduce that
\begin{align}\label{4.10}
 \big  (hz(\theta_t\omega)-\nu A^{\frac 54}hz(\theta_t\omega) ,A^{\frac 54}w \big )   &\leq \nu\|A^{\frac 54}hz(\theta_t\omega)\|\|A^{\frac 54}w\|+\|hz(\theta_t\omega)\|\|A^{\frac 54}w\|\nonumber\\
&\leq \frac  \nu 4\|A^{\frac 54}w\|^2+C\|A^{\frac 54}h\|^2|z(\theta_t\omega)|^2+C\|h\|^2|z(\theta_t\omega)|^2\nonumber\\
&\leq \frac  \nu 4\|A^{\frac 54}w\|^2+C|z(\theta_t\omega)|^2.
\end{align}
By $u$ is a trajectory within the global attractor $\mathcal{A}_0$, we deduce $\|A^{\frac58}u(t)\|\leq C$ for any $t\geq0$. For the second and third terms on the right hand of \eqref{4.11}, by using H\"{o}lder's inequality, the Sobolev embedding inequality and Young's inequality, it can deduce that
\begin{align}
\big (B(w+hz(\theta_t\omega),u),A^{\frac 54}w \big )&\leq \|w+hz(\theta_t\omega)\|_{L^{12}}\|\nabla u\|_{L^{\frac{12}{5}}}\|A^{\frac54}w\|\nonumber\\
&\leq C\|A^{\frac58}(w+hz(\theta_t\omega))\|\|A^{\frac58} u\|\|A^{\frac54}w\|\nonumber\\
&\leq\frac{\nu}{8}\|A^{\frac54}w\|^2+C\|A^{\frac58} u\|^2\|A^{\frac58}(w+hz(\theta_t\omega))\|^2\nonumber\\
&\leq\frac{\nu}{8}\|A^{\frac54}w\|^2+C\|A^{\frac58} u\|^2\|A^{\frac58}w\|^2+C\|A^{\frac58}h\|^2|z(\theta_t\omega)|^2\|A^{\frac58} u\|^2\nonumber\\
&\leq\frac{\nu}{8}\|A^{\frac54}w\|^2+C\|A^{\frac58} u\|^2\|A^{\frac58}w\|^2+C|z(\theta_t\omega)|^2\|A^{\frac58} u\|^2\nonumber\\
&\leq\frac{\nu}{8}\|A^{\frac54}w\|^2+C\|A^{\frac58}w\|^2+C|z(\theta_t\omega)|^2,
\end{align}
and
\begin{align}\label{4.9}
&\big (B(v+hz(\theta_t\omega),w+hz(\theta_t\omega)),A^{\frac 54}w  \big ) \nonumber\\
&\leq \|v+hz(\theta_t\omega)\|_{L^{12}}\|\nabla(w+hz(\theta_t\omega))\|_{L^{\frac{12}{5}}}\|A^{\frac 54}w\|\nonumber\\
&\leq C\|A^{\frac 58}(v+hz(\theta_t\omega))\|\|A^{\frac 58}(w+hz(\theta_t\omega))\|\|A^{\frac 54}w\|\nonumber\\
&\leq \frac{\nu}{8}\|A^{\frac 54}w\|^2+C\|A^{\frac 58}(v+hz(\theta_t\omega))\|^2\|A^{\frac 58}(w+hz(\theta_t\omega))\|^2\nonumber\\
&\leq \frac{\nu}{8}\|A^{\frac 54}w\|^2+C(\|A^{\frac 58}v\|^2+\|A^{\frac 58}h\|^2|z(\theta_t\omega)|^2)\|A^{\frac 58}w\|^2\nonumber\\
&+C(\|A^{\frac 58}v\|^2+\|A^{\frac 58}h\|^2|z(\theta_t\omega)|^2)\|A^{\frac 58}h\|^2|z(\theta_t\omega)|^2\nonumber\\
&\leq \frac{\nu}{8}\|A^{\frac 54}w\|^2+C(\|A^{\frac 58}v\|^2+|z(\theta_t\omega)|^2)\|A^{\frac 58}w\|^2\nonumber\\
&+C(\|A^{\frac 58}v\|^2+|z(\theta_t\omega)|^2)|z(\theta_t\omega)|^2\nonumber\\
&\leq \frac{\nu}{8}\|A^{\frac 54}w\|^2+C(\|A^{\frac 58}v\|^2+|z(\theta_t\omega)|^2)\|A^{\frac 58}w\|^2
+C(\|A^{\frac 58}v\|^4+|z(\theta_t\omega)|^4).
\end{align}
Inserting \eqref{4.10}-\eqref{4.9} into \eqref{4.11}, we deduce that
\begin{align}
\frac{\d}{\d t}\|A^{\frac58}w\|^2+  \nu \|A^{\frac54}w\|^2
&\leq C \left (1+\|A^{\frac58}v\|^2+|z(\theta_t\omega)|^2 \right )\|A^{\frac58}w\|^2  \nonumber \\
&\quad
+C \left(1+\|A^{\frac58}v\|^4+|z(\theta_t\omega)|^4\right) .
\label{sep6.1}
\end{align}

For $t >1$ and $s\in[t-1,t]$, by Gronwall's lemma, we have that
\begin{align}
\|A^{\frac58}w(t)\|^2&\leq e^{\int_s^tC(1+\|A^{\frac58}v(\eta) \|^2+|z(\theta_\eta\omega)|^2) \d \eta}\|A^{\frac58}w(s)\|^2\nonumber\\
&+C\int_s^te^{\int_\tau^tC(1+\|A^{\frac58}v(\eta) \|^2+|z(\theta_\eta\omega)|^2) \d \eta} \left(1+\|A^{\frac58}v(\tau)\|^4+|z(\theta_\tau\omega)|^4 \right) \d \tau\nonumber\\
&\leq Ce^{\int_s^tC(\|A^{\frac58}v(\eta) \|^2+|z(\theta_\eta\omega)|^2) \d \eta} \nonumber  \\
&\times
 \left[\|A^{\frac58}w(s)\|^2
+\int_{t-1}^t    \left(1+\|A^{\frac58}v(\tau)\|^4+|z(\theta_\tau\omega)|^4\right) \d \tau \right]. \nonumber
\end{align}
Integrating w.r.t$.$  $s$ over $[t-1,t]$, it can deduce that
\begin{align*}
\|A^{\frac58}w(t)\|^2
&\leq Ce^{\int_{t-1}^tC(\|A^{\frac58}v(\eta)\|^2+|z(\theta_\eta\omega)|^2) \d \eta}\nonumber\\
&\quad \times \left[\int_{t-1}^t\|A^{\frac58}w(s)\|^2 \d s
+\int_{t-1}^t  \left(1+\|A^{\frac58}v(\tau)\|^4+|z(\theta_\tau \omega)|^4\right) \d \tau\right]. \nonumber
\end{align*}
 By virtue of  $ u$ is a trajectory within the global attractor, $\|A^{\frac 58} u\|^2$ is uniformly bounded. Hence, we have
\begin{align*}
\|A^{\frac 58} w(s)\|^2
 =\|A^{\frac 58} (v-u) (s)\|^2
  \leq 2\|A^{\frac 58} v (s)\|^2 + C\leq \|A^{\frac 58} v (s)\|^4+ C,
\end{align*}
and
\ben
\|A^{\frac58}w(t)\|^2
 \leq Ce^{\int_{t-1}^tC(\|A^{\frac58}v(\eta)\|^2+|z(\theta_\eta\omega)|^2) \d \eta}  \int_{t-1}^t  \left(1+\|A^{\frac58}v(\tau)\|^4+|z(\theta_\tau\omega)|^4 \right)\d \tau .
\ee
Replacing $\omega$ with $\theta_{-t}\omega$, it is easy to get for $t>1$
\begin{align}\label{4.14}
  \|A^{\frac58}w(t,\theta_{-t}\omega,w(0))\|^2
&\leq Ce^{C \int_{t-1}^t\|A^{\frac58}v(\eta,\theta_{-t}\omega,v(0))\|^2 \d \eta+  C\int_{t-1}^t|z(\theta_{\eta-t}\omega)|^2
  \d \eta}  \nonumber\\
 &
 \times \left(\int_{t-1}^t  \|A^{\frac58}v(s,\theta_{-t}\omega,v(0))\|^4  \d s + \int_{t-1}^t  |z(\theta_{s-t}\omega)|^4 \d s +1 \right) \nonumber\\
 &\leq Ce^{C \int_{t-1}^t\|A^{\frac58}v(\eta,\theta_{-t}\omega,v(0))\|^2 \d \eta+  C\int_{-1}^0|z(\theta_{\eta}\omega)|^2
  \d \eta}  \nonumber\\
 &
 \times \left(\int_{t-1}^t  \|A^{\frac58}v(s,\theta_{-t}\omega,v(0))\|^4  \d s + \int_{-1}^0  |z(\theta_{s}\omega)|^4 \d s +1 \right).
\end{align}
By \eqref{4.27}, we have
\begin{align}\label{4.20}
\frac{\d}{\d t}\|A^{\frac58}v\|^2 + \nu \|A^{\frac54}v\|^2
 \leq  C\left ( \|A^{\frac58}v\|^2 +  |z(\theta_t\omega)|^2 \right)  \|A^{\frac58}v\|^2  +
C \left(1 +|z(\theta_t\omega)|^4  \right).
\end{align}
For any $t>3$,  integrating \eqref{4.20} over $(\tau, t)$ with $\tau\in (t-3,t-2)$, we deduce that
\ben
&
\|A^{\frac58}v (t) \|^2 -\|A^{\frac58}v(\tau) \|^2  + \nu \int^t_{t-2} \|A^{\frac54} v(s)\|^2 \d s \\
&\quad
 \leq  C \int^t_{t-3} \left(\|A^{\frac58}v(s) \|^2+  |z(\theta_s\omega)|^2 \right) \|A^{\frac58}v(s) \|^2 \d s
+ C\int^t_{t-3} \left(1 +|z(\theta_s\omega)|^4 \right )  \d s,
\ee
and  integrating $\tau$ over $(t-3,t-2)$, it can deduce that for $t> 3$
\ben
\|A^{\frac58}v (t) \|^2+  \nu \int^t_{t-2} \|A^{\frac54} v(s)\|^2 \d s
& \leq   C \int^t_{t-3} \left ( \|A^{\frac58}v(s) \|^2+|z(\theta_s\omega)|^2+1\right)  \|A^{\frac58}v(s) \|^2  \d s \\
&\quad + C\int^t_{t-3} \left (1 +|z(\theta_s\omega)|^4 \right)  \d s.
\ee
For $\varepsilon\in  [0,2]$, replacing $\omega$ with $\theta_{-t-\varepsilon} \omega$, it yields
\begin{align}
 & \|A^{\frac58}v (t,\theta_{-t-\varepsilon} \omega, v(0)) \|^2 +  \nu \int_{t-2}^t  \|A^{\frac54}v (s,\theta_{-t-\varepsilon} \omega, v(0)) \|^2  \d s \nonumber \\
& \leq   C \int^t_{t-3} \left (\|A^{\frac58}v(s,\theta_{-t-\varepsilon} \omega, v(0)) \|^2 +|z(\theta_{s-t-\varepsilon} \omega)|^2+1\right)  \|A^{\frac58}v(s,\theta_{-t-\varepsilon} \omega, v(0)) \|^2  \d s\nonumber\\
 &+ C\int^{t}_{t-3} \left(1 +|z(\theta_{s-t-\varepsilon}\omega)|^4 \right ) \d s\nonumber\\
& \leq   C \int^t_{t-3} \left (|z(\theta_{s-t-\varepsilon} \omega)|^2+1\right)  \|A^{\frac58}v(s,\theta_{-t-\varepsilon} \omega, v(0)) \|^2  \d s\nonumber\\
&+ C \int^t_{t-3} \|A^{\frac58}v(s,\theta_{-t-\varepsilon} \omega, v(0)) \|^2  \|A^{\frac58}v(s,\theta_{-t-\varepsilon} \omega, v(0)) \|^2  \d s\nonumber\\
 &+ C\int^{-\varepsilon}_{-\varepsilon-3} \left(1 +|z(\theta_{s}\omega)|^4 \right ) \d s\nonumber\\
& \leq   C\left (  \sup_{\tau\in (-5,0)}|z(\theta_\tau\omega)|^2+1\right) \int^{t+\varepsilon}_{t+\varepsilon-5}  \|A^{\frac58}v(s,\theta_{-t-\varepsilon} \omega, v(0)) \|^2 \d s \nonumber\\
& +   C \int^{t+\varepsilon}_{t+\varepsilon-5} \|A^{\frac58}v(s,\theta_{-t-\varepsilon} \omega, v(0)) \|^2 \|A^{\frac58}v(s,\theta_{-t-\varepsilon} \omega, v(0)) \|^2 \d s \nonumber\\
&+ C\int^{0}_{-5}    \left(1 +|z(\theta_s\omega)|^4 \right)   \d s ,\quad t>5 . \label{mar8.1}
\end{align}
By \eqref{4.29} and Lemma \ref{lem:H1bound}, there exists $t\geq T_B(\omega)>T_1(\omega)$ such that
\begin{align*}
  \frac {\alpha  \nu }2\int_0^te^{ \left( \frac 8 \alpha -3  \right) \lambda (s-t)}\|A^{\frac58}v(s, \theta_{-t}\omega, v(0))\|^2\d s
 \leq e^{-\lambda t}\|v(0)\|^2
 + \zeta_1(\omega)
 \end{align*}
 and
\begin{align*}
 \|A^{\frac58}v(t, \theta_{-t}\omega, v(0))\|^2
 \leq  \zeta_2(\omega)
 \end{align*}
where $\zeta_2(\omega)>\zeta_1(\omega)\geq 1$ are a tempered random variable, respectively. Then we have for $t\geq T_B(\omega)+5 $ and $\varepsilon \in [0,2]$
 \begin{align}\label{4.1}
  \int^{t+\varepsilon}_{t+\varepsilon-5}  \|A^{\frac58}v(s,\theta_{-t-\varepsilon} \omega, v(0)) \|^2  \d s
  &\leq  C \int^{t+\varepsilon}_{t+\varepsilon-5}  e^{ \left( \frac 8 \alpha -3  \right) \lambda (s-t)}\|A^{\frac58}v(s,\theta_{-t-\varepsilon} \omega, v(0)) \|^2  \d s \nonumber\\
  & \leq Ce^{-\lambda t}\|v(0)\|^2
 + C \zeta_1(\omega) ,
 \end{align}
and
 \begin{align}\label{4.2}
  &\int^{t+\varepsilon}_{t+\varepsilon-5}  \|A^{\frac58}v(s,\theta_{-t-\varepsilon} \omega, v(0)) \|^4  \d s\nonumber\\
  &\leq  C\sup\limits_{s\in[t+\varepsilon-5,t+\varepsilon]} \|A^{\frac58}v(s,\theta_{-t-\varepsilon} \omega, v(0)) \|^2\int^{t+\varepsilon}_{t+\varepsilon-5} e^{ \left( \frac 8 \alpha -3  \right) \lambda (s-t)} \|A^{\frac58}v(s,\theta_{-t-\varepsilon} \omega, v(0)) \|^2  \d s \nonumber\\
  & \leq C\zeta_2(\omega)(e^{-\lambda t}\|v(0)\|^2
 +  \zeta_1(\omega)) .
 \end{align}
Inserting \eqref{4.1} and \eqref{4.2} into \eqref{mar8.1}, it can deduce that for $t\geq T_B(\omega)+5 $
\begin{align}
 &\sup\limits_{\varepsilon\in[0,2]} \left(\|A^{\frac58}v (t,\theta_{-t-\varepsilon} \omega, v(0)) \|^2 +  \nu \int_{t-2}^t  \|A^{\frac54}v (s,\theta_{-t-\varepsilon} \omega, v(0)) \|^2  \d s \right)\nonumber \\
& \leq   C\left (  \sup_{\tau\in (-5,0)}|z(\theta_\tau\omega)|^2+1\right)(e^{-\lambda t}\|v(0)\|^2
 +  \zeta_1(\omega))  \nonumber\\
& +   C \zeta_2(\omega)(e^{-\lambda t}\|v(0)\|^2
 +  \zeta_1(\omega))+ C\int^{0}_{-5}    \left(1 +|z(\theta_s\omega)|^4 \right)   \d s \nonumber \\
& \leq   C\left (  \sup_{\tau\in (-5,0)}|z(\theta_\tau\omega)|^4+1\right)(e^{-\lambda t}\|v(0)\|^2
 +  \zeta_1(\omega))  \nonumber\\
& +   C \zeta_2(\omega)(e^{-\lambda t}\|v(0)\|^2
 +  \zeta_1(\omega)) .
\end{align}
For $t\geq T_B(\omega)+7$ and $\eta\in[t-2,t]$ such that $\eta\geq T_B(\omega)+5 $. Then we have  for $\eta\geq t-2$
\begin{align}\label{4.7}
& \|A^{\frac58}v (\eta ,\theta_{-t } \omega, v(0)) \|^2 + \nu \int_{\eta-2}^\eta  \|A^{\frac54} v (s,\theta_{-t} \omega, v(0)) \|^2  \d s\nonumber\\
  &  =  \|A^{\frac58}v (\eta ,\theta_{-\eta-(t-\eta) } \omega, v(0)) \|^2
  + \nu  \int_{\eta-2}^\eta  \|A^{\frac54} v (s,\theta_{-\eta-(t-\eta)} \omega, v(0)) \|^2 \d s \nonumber\\
& \leq    C\left (  \sup_{\tau\in (-5,0)}|z(\theta_\tau\omega)|^4+1\right)\left(e^{-\lambda  \eta}\|v(0)\|^2
 +   \zeta_1(\omega)   \right)+C\zeta_2(\omega)\left(e^{-\lambda  \eta}\|v(0)\|^2
 +   \zeta_1(\omega)   \right)
\nonumber\\
& \leq   C\left (  \sup_{\tau\in (-5,0)}|z(\theta_\tau\omega)|^4+1\right)\left(e^{-\lambda t}\|v(0)\|^2
 +   \zeta_1(\omega)   \right)+ C\zeta_2(\omega)\left(e^{-\lambda  t}\|v(0)\|^2
 +   \zeta_1(\omega)   \right).
\end{align}
Since the initial value $v(0)\in \mathfrak B(\theta_{-t} \omega)$, then there exists a random variable $T_{\mathfrak B}(\omega) \geq T_B(\omega)+ 7$ such that for any $t\geq T_{\mathfrak B}(\omega)$
\begin{align}\label{timeB}
 \sup_{v(0)\in \mathfrak B(\theta_{-t} \omega)}  e^{-\lambda t} \|v(0)\|^2 \leq 1.
\end{align}
Inserting \eqref{timeB} into \eqref{4.7}, it yields for any $t\geq T_{\mathfrak B}(\omega)$
\begin{align}\label{mar8.3}
 &\sup_{\eta\in [t-2,t]}
  \left(
 \|A^{\frac58}v (\eta ,\theta_{-t } \omega, v(0)) \|^2
 + \nu \int_{\eta-2}^\eta  \|A^{\frac54} v (s,\theta_{-t} \omega, v(0)) \|^2  \d s\right) \nonumber \\
& \leq  C\left (  \sup_{\tau\in (-5,0)}|z(\theta_\tau\omega)|^4+1\right)\big(1
 +   \zeta_1(\omega)   \big)+C\zeta_2(\omega)\big(1
 +   \zeta_1(\omega)   \big)
 =:  \zeta_3(\omega)  ,
\end{align}
here   $ \zeta_3 $ is a tempered random variable. Moreover, it is easy to deduce that
\begin{align}\label{mar8.2}
  \int_{t-1}^t\|A^{\frac58}v(\eta,\theta_{-t}\omega,v(0))\|^4  \d \eta
&  \leq    |\zeta_3(\omega)|^2 ,  \quad t\geq T_{\mathfrak B} (\omega) .
\end{align}
By \eqref{4.29}, there exists $t\geq T_{\mathfrak B}(\omega)$ such that
\begin{align}\label{4.12}
  \int^{t}_{t-1}  \|A^{\frac58}v(s,\theta_{-t} \omega, v(0)) \|^2  \d s
  &\leq  C \int^{t}_{t-1}  e^{ \left( \frac 8 \alpha -3  \right) \lambda (s-t)}\|A^{\frac58}v(s,\theta_{-t} \omega, v(0)) \|^2  \d s \nonumber\\
  & \leq Ce^{-\lambda t}\|v(0)\|^2
 + C \zeta_1(\omega)\nonumber\\
 & \leq C(1
 +  \zeta_1(\omega))\nonumber\\
 & \leq |\zeta_3(\omega)|.
 \end{align}
Inserting \eqref{mar8.2} and \eqref{4.12} into \eqref{4.14}, we deduce that for any $t\geq T_{\mathfrak B}(\omega)$
\begin{align}  \label{mar8.4}
\|A^{\frac58}w(t,\theta_{-t}\omega,w(0))\|^2
&\leq Ce^{ |\zeta_3 (\omega)|+ C\int_{-1}^0|z(\theta_{\eta}\omega)|^2  \d \eta}  \left( |\zeta_3(\omega)|^2 + \int_{-1}^0 |z(\theta_{s}\omega)|^4   \d s  \right) \nonumber \\
&\leq Ce^{ 2|\zeta_3 (\omega)|^2+ C\int_{-1}^0|z(\theta_{\eta}\omega)|^4   \d \eta}
=: \zeta_4(\omega) ,
\nonumber
\end{align}
here $\zeta_4(\omega)$ is a tempered random variable.   This completes the proof of Lemma \ref{lemma4.3}.
\end{proof}

\begin{lemma}[$H^{\frac{5}{2}}$-distance] \label{lem:H2bd}  Let Assumption \ref{assum} hold and $f\in H$.
There exists a tempered random variable $\zeta_5(\cdot)$ such that the solutions $v$ of the random fractional three-dimensional NS equations \eqref{2.2} and the solutions $u$  of the deterministic fractional three-dimensional NS equations  \eqref{2.1} satisfy
\[
 \|A^{\frac54}v(t,\theta_{-t}\omega,v(0))-A^{\frac54}u(t,u(0))\|^2\leq \zeta_5(\omega),
\]
for  any $t\geq T_{\mathfrak B}(\omega)$, $v(0)\in\mathfrak{B}(\theta_{-t}\omega)$
 and $u(0)\in\mathcal{A}_0$,
where $T_{\mathfrak B}$ is defined by \eqref{timeB}.
\end{lemma}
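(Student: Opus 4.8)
The plan is to repeat the comparison scheme of Lemma \ref{lemma4.3} one differentiation order higher, bootstrapping on the $H^{\frac54}$-distance bound $\zeta_4$ already obtained there. As before, let $w=v-u$, where $v$ solves the random system \eqref{2.2} with $v(0)\in\mathfrak B(\theta_{-t}\omega)$ and $u$ solves the deterministic system \eqref{2.1} with $u(0)\in\mathcal{A}_0$, so that $w$ satisfies \eqref{4.8}. Now, instead of pairing with $A^{\frac54}w$, I would take the inner product of \eqref{4.8} in $H$ with $A^{\frac52}w$ and integrate by parts, which produces
\[
\tfrac12\tfrac{\d}{\d t}\|A^{\frac54}w\|^2+\nu\|A^{\frac{15}{8}}w\|^2 = \big(hz(\theta_t\omega)-\nu A^{\frac54}hz(\theta_t\omega),A^{\frac52}w\big)-\big(B(v+hz(\theta_t\omega))-B(u),A^{\frac52}w\big).
\]
Splitting $v=w+u$ in the convection term gives the familiar decomposition $B(v+hz)-B(u)=B(w+hz,u)+B(v+hz,w+hz)$, and the parabolic gain $\nu\|A^{\frac{15}{8}}w\|^2$ (note $\frac{15}{8}>\frac54$) supplies the dissipation needed to absorb the remainders.

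For the linear forcing I would move half a power of $A$ onto $w$, writing $(\nu A^{\frac54}hz,A^{\frac52}w)=\nu z(\theta_t\omega)(A^{\frac{15}{8}}h,A^{\frac{15}{8}}w)$ and estimating by $\tfrac{\nu}{8}\|A^{\frac{15}{8}}w\|^2+C|z(\theta_t\omega)|^2\|A^{\frac{15}{8}}h\|^2$; here $\|A^{\frac{15}{8}}h\|=\|h\|_{H^{\frac{15}{4}}}$ is finite \emph{precisely} by Assumption \ref{assum}, which is the structural reason the hypothesis $h\in H^{\frac{15}{4}}$ is imposed. For the two nonlinear terms I would pair $A^{\frac58}$ off against $A^{\frac{15}{8}}w$ and apply a fractional Leibniz (Gagliardo--Nirenberg) estimate together with the uniform bound $\|A^{\frac54}u(t)\|\le C$, valid because $u(0)\in\mathcal{A}_0$ is bounded in $H^{\frac52}$ by Lemma \ref{lem:det}. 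After Young's inequality this should lead to a differential inequality of the form
\[
\tfrac{\d}{\d t}\|A^{\frac54}w\|^2+\nu\|A^{\frac{15}{8}}w\|^2\le C\big(1+\|A^{\frac58}v\|^2+\|A^{\frac54}v\|^2+|z(\theta_t\omega)|^2\big)\|A^{\frac54}w\|^2+C\big(1+\|A^{\frac58}v\|^4+|z(\theta_t\omega)|^4\big),
\]
in complete analogy with \eqref{sep6.1}.

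I would then run Gronwall's lemma on $(s,t)$ and integrate $s$ over $(t-1,t)$, exactly as in Lemma \ref{lemma4.3}. The integrating factor is controlled because $\int_{t-1}^t(\|A^{\frac58}v\|^2+\|A^{\frac54}v\|^2+|z(\theta_t\omega)|^2)\,\d s$ is bounded for $t\ge T_{\mathfrak B}(\omega)$: the $H^{\frac58}$ and $|z|^2$ integrals are handled as before, while $\int_{t-1}^t\|A^{\frac54}v\|^2\,\d s\le\zeta_3(\omega)$ follows from \eqref{mar8.3}. The forcing integral is bounded by \eqref{mar8.2} and the pointwise bound $\|A^{\frac58}v\|^2\le\zeta_2$, and the time-averaged term $\int_{t-1}^t\|A^{\frac54}w\|^2\,\d s$ is controlled via $\|A^{\frac54}w\|^2\le 2\|A^{\frac54}v\|^2+2\|A^{\frac54}u\|^2$ and the same estimate \eqref{mar8.3}. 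Finally I would replace $\omega$ by $\theta_{-t}\omega$, invoke \eqref{4.29}, \eqref{mar8.3} and \eqref{timeB} (which hold for $t\ge T_{\mathfrak B}(\omega)$, $v(0)\in\mathfrak B(\theta_{-t}\omega)$), and collect everything into a single tempered random variable $\zeta_5(\omega)$.

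The main obstacle is the nonlinear estimate at this level: bounding $\|A^{\frac58}\big((v+hz)\cdot\nabla(w+hz)\big)\|$ and $\|A^{\frac58}\big((w+hz)\cdot\nabla u\big)\|$ requires a fractional product rule, and the distribution of derivatives is tight because $\nabla u$ only lies in $H^{\frac32}$, which sits at the endpoint of the embedding into $L^\infty$ on $\mathbb T^3$. A second, equally important, point is that one must arrange the splitting so that the \emph{top} norm $\|A^{\frac54}v\|$ enters the forcing at most quadratically (or, better, is replaced by the pointwise-bounded $\|A^{\frac58}v\|$): no a priori pointwise $H^{\frac52}$ bound on $v$ is available, so a quartic $\|A^{\frac54}v\|^4$ in the forcing would not integrate. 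The estimate should still close because the criticality index of $(-\Delta)^{\frac54}$ in three dimensions is $s_c=0$, so $H^{\frac52}$ is well within the subcritical range; one balances the two factors so that the highest derivative always falls on $w$ and is absorbed by $\tfrac{\nu}{8}\|A^{\frac{15}{8}}w\|^2$, while the remaining factors are measured in the already-controlled norms $\|A^{\frac58}v\|$, $\|A^{\frac54}u\|$ and the time-integrable $\|A^{\frac54}v\|$. Keeping every coefficient integrable on the unit window is exactly what makes the Gronwall step go through.
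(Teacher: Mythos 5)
Your proposal is correct and follows essentially the same route as the paper: the same comparison function $w=v-u$ tested against $A^{\frac52}w$, the same splitting $B(v+hz(\theta_t\omega))-B(u)=B(w+hz(\theta_t\omega),u)+B(v+hz(\theta_t\omega),w+hz(\theta_t\omega))$, Gronwall's lemma over a unit window with the initial term integrated out, and the same invocation of \eqref{mar8.3}, \eqref{mar8.2} and the $H^{\frac52}$-boundedness of $\mathcal A_0$ after replacing $\omega$ by $\theta_{-t}\omega$. The only (harmless) difference is in how the exponents fall in the nonlinear estimates: the paper's Gagliardo--Nirenberg step ($\|\nabla(w+hz)\|_{L^\infty}\lesssim\|w+hz\|^{1/3}\|A^{\frac{15}{8}}(w+hz)\|^{2/3}$) yields a Gronwall coefficient $C(1+\|A^{\frac58}v\|^6+|z(\theta_t\omega)|^6)$ and an eighth-power forcing, whereas your $L^{12}$--$L^{12/5}$ pairing puts the merely time-integrable $\|A^{\frac54}v\|^2$ (plus a cross term $\|A^{\frac54}v\|^2|z(\theta_t\omega)|^2$ in the forcing) into the estimate; both versions are controlled by \eqref{mar8.3}, so the Gronwall step closes either way.
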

\begin{proof}
Taking the inner product of the system \eqref{4.8} in $H$ by $A^{\frac{5}{2}}w$ and   integration by parts, we can get
\begin{align}\label{4.17}
 \frac12\frac{\d}{\d t}\|A^{\frac54}w\|^2+  \nu  \|A^{\frac{15}{8}}w\|^2
&= \left(hz(\theta_t\omega)-\nu A^{\frac54}hz(\theta_t\omega) ,A^{\frac52}w \right) \nonumber \\
&-\left(B(v+hz(\theta_t\omega)) -B(u),A^{\frac52}w \right)\nonumber\\
&=  \left(hz(\theta_t\omega)-\nu A^{\frac54}hz(\theta_t\omega) ,A^{\frac52}w \right)\nonumber\\
& -\left(B(v+hz(\theta_t\omega),w+hz(\theta_t\omega)),A^{\frac52}w \right )\nonumber\\
&
-\left(B(w+hz(\theta_t\omega),u),A^{\frac52}w \right ).
\end{align}
Now we estimate \eqref{4.17} term by term to obtain \eqref{sep6.3}. For the first term of the right hand side of \eqref{4.17},
by using H\"{o}lder's inequality and the Young inequality and $h\in H^\frac{15}{4}$,  we deduce that
\begin{align}
&\left(hz(\theta_t\omega)-\nu A^{\frac54}hz(\theta_t\omega) ,A^{\frac52}w \right)\nonumber\\
&\leq  \nu \|A^{\frac58}h\||z(\theta_t\omega)|\|A^{\frac{15}{8}}w\|+ \|A^{\frac{15}{8}}h\||z(\theta_t\omega)|\|A^{\frac{15}{8}}w\|\nonumber\\
&\leq \frac{ \nu }{8}\|A^{\frac{15}{8}}w\|^2+C\|A^{\frac58}h\|^2  |z(\theta_t\omega)|^2+C\|A^{\frac{15}{8}}h\|^2|z(\theta_t\omega)|^2\nonumber\\
&\leq \frac{ \nu }{8}\|A^{\frac{15}{8}}w\|^2+C |z(\theta_t\omega)|^2. \label{4.18}
\end{align}
For the second term of the right hand side of \eqref{4.17}, by integration by parts and applying H\"{o}lder's inequality, the Sobolev embedding inequality, the Gagliardo-Nirenberg inequality and the Young inequality, we deduce that for $h\in H^\frac{15}{4}$
\begin{align}
&\left|\left(B(v+hz(\theta_t\omega),w+hz(\theta_t\omega)),A^{\frac52}w \right )\right|\nonumber\\
&\leq C\|A^\frac58(v+hz(\theta_t\omega))\|\|\nabla(w+hz(\theta_t\omega))\|_{L^\infty}\|A^{\frac{15}{8}}w\|
\nonumber\\
&+C\|v+hz(\theta_t\omega)\|_{L^{12}}\|A^{\frac98}(w+hz(\theta_t\omega))\|_{L^{\frac{12}{5}}}\|A^{\frac{15}{8}}w\|
\nonumber\\
&\leq C\|A^\frac58(v+hz(\theta_t\omega))\|\|w+hz(\theta_t\omega)\|^{\frac13}\|A^{\frac{15}{8}}(w+hz(\theta_t\omega))\|^{\frac23}
\|A^{\frac{15}{8}}w\|
\nonumber\\
&+C\|A^{\frac58}(v+hz(\theta_t\omega))\|\|A^{\frac54}(w+hz(\theta_t\omega))\|\|A^{\frac{15}{8}}w\|
\nonumber\\
&\leq \frac{\nu}{8}\|A^{\frac{15}{8}}w\|^2
+C\|A^\frac58(v+hz(\theta_t\omega))\|^2\|A^{\frac{5}{4}}(w+hz(\theta_t\omega))\|^{\frac23}
\|A^{\frac{15}{8}}(w+hz(\theta_t\omega))\|^{\frac43}\nonumber\\
&+C\|A^{\frac58}(v+hz(\theta_t\omega))\|^2\|A^{\frac54}(w+hz(\theta_t\omega))\|^2\nonumber\\
&\leq \frac{\nu}{4}\|A^{\frac{15}{8}}w\|^2+C\|A^{\frac{15}{8}}h\|^2|z(\theta_t\omega)|^2
+C\|A^\frac58(v+hz(\theta_t\omega))\|^6\|A^{\frac{5}{4}}(w+hz(\theta_t\omega))\|^2
\nonumber\\
&+C\|A^{\frac58}(v+hz(\theta_t\omega))\|^2\|A^{\frac54}(w+hz(\theta_t\omega))\|^2\nonumber\\
&\leq \frac{\nu}{4}\|A^{\frac{15}{8}}w\|^2+C|z(\theta_t\omega)|^2\nonumber\\
&+C(\|A^\frac58v\|^6+\|A^\frac58h\|^6|z(\theta_t\omega)|^6+\|A^\frac58v\|^2+\|A^\frac58h\|^2|z(\theta_t\omega)|^2)
(\|A^{\frac{5}{4}}w\|^2+\|A^{\frac54}h\|^2|z(\theta_t\omega)|^2)\nonumber\\
&\leq \frac{\nu}{4}\|A^{\frac{15}{8}}w\|^2+C(1+\|A^\frac58v\|^6+|z(\theta_t\omega)|^6)
\|A^{\frac{5}{4}}w\|^2\nonumber\\
&+C(1+\|A^\frac58v\|^6+|z(\theta_t\omega)|^6)|z(\theta_t\omega)|^2\nonumber\\
&\leq \frac{\nu}{4}\|A^{\frac{15}{8}}w\|^2+C(1+\|A^\frac58v\|^6+|z(\theta_t\omega)|^6)
\|A^{\frac{5}{4}}w\|^2\nonumber\\
&+C(1+\|A^\frac58v\|^8+|z(\theta_t\omega)|^8).
\end{align}
For the last term of the right hand side of \eqref{4.17}, by integration by parts and similar method and $u$ is a bounded in $H^{\frac{5}{2}}$, we also deduce that for $h\in H^\frac{15}{4}$
\begin{align}\label{4.16}
&\left|\left(B(w+hz(\theta_t\omega),u),A^{\frac52}w \right )\right|\nonumber\\
&\leq C\|A^{\frac58}(w+hz(\theta_t\omega))\|_{L^{12}}\|\nabla u\|_{L^{\frac{12}{5}}}\|A^{\frac{15}{8}}w\|+C\|w+hz(\theta_t\omega)\|_{L^{12}}
\|A^{\frac{9}{8}}u\|_{L^{\frac{12}{5}}}\|A^{\frac{15}{8}}w\|
\nonumber\\
&\leq C\|A^{\frac54}(w+hz(\theta_t\omega))\|\|A^\frac58u\|
\|A^{\frac{15}{8}}w\|  +C\|A^\frac58(w+hz(\theta_t\omega))\|\|A^\frac54u\|\|A^{\frac{15}{8}}w\|\nonumber\\
&\leq C\|A^\frac54(w+hz(\theta_t\omega))\|\|A^\frac54u\|\|A^{\frac{15}{8}}w\|\nonumber\\
&\leq C\|A^\frac54(w+hz(\theta_t\omega))\|\|A^{\frac{15}{8}}w\|\nonumber\\
&\leq \frac \nu 8\|A^{\frac{15}{8}}w\|^2+C\left(\|A^{\frac54}w\|^2+\|A^{\frac54}h\|^2|z(\theta_t\omega)|^2 \right) \nonumber\\
&\leq \frac \nu 8\|A^{\frac{15}{8}}w\|^2+C\|A^{\frac54}w\|^2+C|z(\theta_t\omega)|^2.
\end{align}
Inserting \eqref{4.18}-\eqref{4.16} into \eqref{4.17},  we have
\begin{align}
 \frac{\d}{\d t}\|A^{\frac{5}{4}}w\|^2+  \nu  \|A^{\frac{15}{8}}w\|^2
&\leq C(1+\|A^\frac58v\|^6+|z(\theta_t\omega)|^6)
\|A^{\frac{5}{4}}w\|^2\nonumber\\
&+C(1+\|A^\frac58v\|^8+|z(\theta_t\omega)|^8) .  \label{sep6.3}
\end{align}

 For any $t >1$ and $s\in[t-1,t]$, applying Gronwall's lemma to \eqref{sep6.3},   we deduce that
\begin{align*}
\|A^{\frac{5}{4}}w(t)\|^2&\leq e^{C\int_s^t( 1+\|A^\frac58 v(\eta)\|^6 +|z(\theta_\eta\omega)|^6)\d \eta}\|A^{\frac{5}{4}}w(s)\|^2 \nonumber\\
& +C\int_s^t e^{C\int_\tau^t  (1+\|A^\frac58 v(\eta)\|^6+|z(\theta_\eta\omega)|^6) \d \eta}
 \Big (1+\|A^\frac58 v(\tau)\|^8  +|z(\theta_\tau\omega)|^8\Big ) \d \tau\nonumber\\
&\leq Ce^{C\int_{t-1}^t(1+\|A^\frac58 v(\eta)\|^6 +|z(\theta_\eta\omega)|^6)\d \eta} \nonumber \\
&\times \left (\|A^{\frac{5}{4}}w(s)\|^2+\int_{t-1}^t
(1+\|A^\frac58 v(\tau)\|^8  +|z(\theta_\tau\omega)|^8) \d \tau \right) \\
& \leq Ce^{C\int_{t-1}^t(1+\|A^\frac58 v(\eta)\|^8 +|z(\theta_\eta\omega)|^8) \d \eta} \Big(\|A^{\frac{5}{4}}w(s)\|^2+ 1 \Big)
. \nonumber
\end{align*}
Integrating  $s$ over   $  [t-1,t] $,  we also deduce that
\begin{align}
\|A^{\frac{5}{4}}w(t)\|^2
&\leq Ce^{C\int_{t-1}^t  (1+\|A^\frac58 v(\eta)\|^8+|z(\theta_\eta\omega)|^8) \d \eta}
 \left (\int_{t-1}^t\|A^{\frac{5}{4}}w(s)\|^2 \d s + 1\right). \nonumber
\end{align}
    Replacing $\omega$ with $\theta_{-t}\omega$, it yields
\begin{align}\label{4.22}
\|A^{\frac{5}{4}}w(t,\theta_{-t}\omega,w(0))\|^2
&\leq Ce^{C\int_{t-1}^t \|A^\frac58 v(\eta,\theta_{-t}\omega, v(0))\|^8 \d \eta + C\int^t_{t-1} |z(\theta_{\eta-t}\omega) |^8
 \d \eta}\nonumber\\
&\times \left(\int_{t-1}^t\|A^{\frac{5}{4}}w(s,\theta_{-t}\omega,w(0))\|^2 \d s +1  \right)\nonumber\\
& = Ce^{C\int_{t-1}^t \|A^\frac58 v(\eta,\theta_{-t}\omega, v(0))\|^8 \d \eta + C\int^0_{-1} |z(\theta_{\eta}\omega) |^8
 \d \eta}\nonumber\\
& \times \left(\int_{t-1}^t\|A^{\frac{5}{4}}w(s,\theta_{-t}\omega,w(0))\|^2 \d s +1  \right).
\end{align}
By virtue of \eqref{mar8.3} it yields
\begin{align} \label{4.23}
  \int_{t-1}^t  \|A^\frac58 v(s,\theta_{-t}\omega,v(0))\|^8 \d s \leq  |\zeta_3(\omega)|^4   ,  \quad t\geq T_{\mathfrak B}(\omega).
\end{align}
By $u$ is a   bounded  in $H^{\frac52}$ and   \eqref{mar8.3}, we can get that for any $t\geq T_{\mathfrak B}(\omega)$
\begin{align}
\int_{t-1}^t\|A^{\frac{5}{4}}w(s,\theta_{-t}\omega,w(0))\|^2 \d s
&\leq\int_{t-1}^t\|A^{\frac{5}{4}}v(s,\theta_{-t}\omega,v(0))\|^2 \d s+\int_{t-1}^t\|A^{\frac{5}{4}}u(s)\|^2 \d s\nonumber\\
&\leq  \int_{t-1}^t \|A^{\frac{5}{4}}v(s,\theta_{-t}\omega,v(0))\|^2 \d s  +  C \nonumber\\
&\leq \frac 1  \nu  \zeta_3(\omega) +C .  \label{mar8.7}
\end{align}
Hence, inserting \eqref{4.23} and \eqref{mar8.7} into \eqref{4.22} yields
\begin{align}
\|A^{\frac{5}{4}}w(t,\theta_{-t}\omega,w(0))\|^2
&\leq C e^{C|\zeta_3(\omega) |^4 + C\int^0_{-1} |z(\theta_{\eta}\omega) |^8
 \d \eta} \left(   \zeta_3(\omega) +1
\right)  \nonumber \\
&=:\zeta_5(\omega),  \quad t\geq T_{\mathfrak B}(\omega).  \nonumber
\end{align}
This completes the proof of Lemma \ref{lem:H2bd}.
\end{proof}

\subsection{$H^{\frac{5}{2}}$ random absorbing sets}
In this subsection, we will prove an  $H^\frac52$ random absorbing set of system \eqref{2.2}. By the above Lemma \ref{lem:H2bd}, we prove an $H^\frac52$ random absorbing set for the  random fractional three-dimensional NS equations \eqref{2.2}.  Then we  next introduce the following  stronger estimate than Lemma \ref{lem:H2bd}.
In order to get the local $(H,H^\frac52)$-Lipschitz continuity of the random fractional three-dimensional NS equations \eqref{2.2} in this Section \ref{sec5}, we need to prove the following main results as Lemma \ref{lemma4.4} and Theorem \ref{theorem4.1}.
\begin{lemma}\label{lemma4.4}
 Let Assumption \ref{assum} hold and $f\in H$. There exists a tempered random variable $\rho(\cdot)$ such that  the solutions $v$ of the random fractional three-dimensional NS equations \eqref{2.2} and the solutions $u$ of the deterministic fractional three-dimensional equations \eqref{2} satisfy
\[
  \sup_{\varepsilon\in [0,1]} \|A^{\frac{5}{4}}v(t,\theta_{-t-\varepsilon}\omega,  v(0))- A^{\frac{5}{4}}u(t,u_0)\|^2     \leq  \rho(\omega) ,
\]
for any $t\geq T_{\mathfrak B} (\omega)$, $v(0)\in \mathfrak B(\theta_{-t-\varepsilon} \omega)$ and $u(0)\in \mathcal A_0$, here $T_{\mathfrak B} (\cdot)$ is given  in \eqref{timeB}.
\end{lemma}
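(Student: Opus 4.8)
The plan is to reproduce the comparison scheme of Lemma~\ref{lem:H2bd}, but to carry the time-shift parameter $\varepsilon$ through every estimate and to control the $v$-dependent quantities \emph{uniformly} in $\varepsilon\in[0,1]$. Writing $w=v-u$ for the difference of the random solution $v$ of \eqref{2.2} and the deterministic solution $u$ of \eqref{2.1} lying in $\mathcal A_0$, I would start from the difference equation \eqref{4.8} and, testing with $A^{5/2}w$ exactly as in the proof of Lemma~\ref{lem:H2bd}, arrive at the same differential inequality \eqref{sep6.3}. No new nonlinear estimates are required: the bounds \eqref{4.18}--\eqref{4.16} on the forcing, the convective term, and the term involving $\nabla u$ depend only on $h\in H^{15/4}$ and on $u$ being bounded in $H^{5/2}$, both of which are untouched by the shift.

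Next I would apply Gronwall's lemma on $(s,t)$ with $s\in[t-1,t]$ and integrate in $s$ over $[t-1,t]$, reproducing
\[
 \|A^{\frac54}w(t)\|^2 \leq C e^{C\int_{t-1}^t(1+\|A^{\frac58}v(\eta)\|^8+|z(\theta_\eta\omega)|^8)\,\d\eta}\left(\int_{t-1}^t\|A^{\frac54}w(s)\|^2\,\d s+1\right).
\]
At this point, instead of the substitution $\omega\mapsto\theta_{-t}\omega$ used in \eqref{4.22}, I would substitute $\omega\mapsto\theta_{-t-\varepsilon}\omega$. The noise factor in the exponent becomes $\int_{-1-\varepsilon}^{-\varepsilon}|z(\theta_\eta\omega)|^8\,\d\eta$, which for every $\varepsilon\in[0,1]$ is dominated by the tempered quantity $\int_{-2}^{0}|z(\theta_\eta\omega)|^8\,\d\eta$, so that term is harmless and $\varepsilon$-free after majorization.

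The crux of the argument, and the step I expect to be the main obstacle, is the $\varepsilon$-uniform control of the two $v$-dependent integrals $\int_{t-1}^t\|A^{\frac58}v(\eta,\theta_{-t-\varepsilon}\omega,v(0))\|^8\,\d\eta$ (in the exponent) and $\int_{t-1}^t\|A^{\frac54}w(s,\theta_{-t-\varepsilon}\omega,w(0))\|^2\,\d s$ (in the bracket). The key point is that the estimate \eqref{mar8.1}, established in the proof of Lemma~\ref{lemma4.3}, is already uniform over the shift excess across the full range $[0,2]$. For $\eta\in[t-1,t]$ and $\varepsilon\in[0,1]$ one writes $\theta_{-t-\varepsilon}\omega=\theta_{-\eta-(t+\varepsilon-\eta)}\omega$ with $t+\varepsilon-\eta\in[0,2]$, so applying \eqref{mar8.1} with final time $\eta$ and excess $t+\varepsilon-\eta$ gives, for $t$ beyond a suitable offset of $T_{\mathfrak B}(\omega)$, the bound $\sup_{\varepsilon\in[0,1]}\sup_{\eta\in[t-1,t]}\|A^{\frac58}v(\eta,\theta_{-t-\varepsilon}\omega,v(0))\|^2\leq \zeta_3(\omega)$, precisely as in the derivation of \eqref{mar8.3}. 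Hence the exponent integral is $\leq|\zeta_3(\omega)|^4$ for all $\varepsilon$, as in \eqref{4.23}, and the bracket integral is controlled uniformly by $\tfrac1\nu\zeta_3(\omega)+C$ after splitting $w=v-u$ and invoking the $H^{5/2}$-boundedness of $u$, exactly as in \eqref{mar8.7}. The only genuine bookkeeping is the $\varepsilon$-uniform analogue of \eqref{timeB}, which holds after possibly enlarging $T_{\mathfrak B}(\omega)$, since temperedness of $\zeta_2$ yields $\sup_{\varepsilon\in[0,1]}e^{-\lambda(t+\varepsilon)}\|\mathfrak B(\theta_{-t-\varepsilon}\omega)\|^2\to0$ as $t\to\infty$.

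Collecting these $\varepsilon$-independent bounds, the right-hand side displayed above is dominated, for every $\varepsilon\in[0,1]$, by the single expression
\[
 \rho(\omega):=Ce^{C|\zeta_3(\omega)|^4+C\int_{-2}^{0}|z(\theta_\eta\omega)|^8\,\d\eta}\big(\zeta_3(\omega)+1\big),
\]
which is tempered because $\zeta_3$ is tempered and $|z(\theta_\cdot\omega)|$ is tempered. Since the bound holds for every $\varepsilon\in[0,1]$ with the same $\rho(\omega)$, taking the supremum over $\varepsilon$ yields the assertion; note that $\rho$ is exactly the $\varepsilon$-uniform counterpart of the $\zeta_5$ of Lemma~\ref{lem:H2bd}, with the noise integral widened from $[-1,0]$ to $[-2,0]$ to absorb the shift.
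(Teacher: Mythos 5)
Your proposal is correct and follows essentially the same comparison route as the paper's proof: the differential inequality \eqref{sep6.3}, the $\varepsilon$-uniform control of $\|A^{\frac58}v\|$ coming from the fact that \eqref{mar8.1}/\eqref{mar8.3} cover shift excesses in $[0,2]$ (equivalently, one can apply \eqref{mar8.3} directly at final time $t+\varepsilon\ge T_{\mathfrak B}(\omega)$, so no enlargement of $T_{\mathfrak B}$ is actually needed, since \eqref{timeB} holds for all later times), and the \eqref{mar8.7}-type bound for $\int\|A^{\frac54}w\|^2$. The one genuine difference is the final step: the paper does not run Gronwall at all, but integrates \eqref{4.15} twice and then pulls the coefficient $1+\|A^{\frac58}v\|^8+|z(\theta_t\omega)|^8$ out of the integral as the tempered supremum $\zeta_6(\omega)=|\zeta_3(\omega)|^4+\sup_{s\in(-2,0)}|z(\theta_s\omega)|^8+1$, which yields the polynomial bound $\rho(\omega)=C\zeta_6(\omega)\left(\zeta_3(\omega)+1\right)$ in \eqref{rho}; your version keeps the exponential Gronwall factor and bounds the exponent, producing $\rho(\omega)=Ce^{C|\zeta_3(\omega)|^4+C\int_{-2}^0|z(\theta_\eta\omega)|^8\,\d\eta}\left(\zeta_3(\omega)+1\right)$, i.e.\ exactly the $\zeta_5$-type expression of Lemma \ref{lem:H2bd} with a widened noise window. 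Both are admissible, but the paper's variant buys a strictly smaller bound whose temperedness follows immediately from products and powers of tempered variables, whereas yours additionally invokes the (paper-wide, implicit) fact that exponentials of these particular tempered variables remain tempered -- an assertion the paper itself relies on for $\zeta_2$, $\zeta_4$ and $\zeta_5$, so it is not a gap relative to the paper's own standard, merely a coarser estimate.
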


\begin{proof}
By \eqref{sep6.3}, we have
\begin{align}\label{4.15}
 \frac{\d}{\d t}\|A^\frac54w\|^2  &\leq C\left (1+\|A^\frac58 v\|^6 +|z(\theta_t\omega)|^6 \right )
\|A^\frac54w\|^2  \nonumber \\
& +C \left (1+\|A^\frac58 v\|^8 +|z(\theta_t\omega)|^8 \right) \nonumber \\
&\leq     C\left (1+\|A^\frac58 v\|^8 +|z(\theta_t\omega)|^8 \right )\left( \|A^\frac54w\|^2+1\right)   .
\end{align}
For any $t \geq 1$, we integrate \eqref{4.15} from $(s,t)$ for $s\in (t-1,t)$
\begin{align}
 \|A^\frac54w(t)\|^2
& \leq    \|A^\frac54w(s)\|^2\nonumber \\
&+C\int_{s}^t   \left (1 +\|A^\frac58v(\tau)\|^8+|z(\theta_\tau\omega)|^8 \right ) \left( \|A^\frac54w(\tau)\|^2 +1\right)\d \tau.
\end{align}
Integrating $s$ over $(t-1,t)$,  it yields
\begin{align}
 \|A^\frac54w(t)\|^2
& \leq
C\int_{t-1}^t   \left (1 +\|A^\frac58v(s)\|^8+|z(\theta_s\omega)|^8 \right ) \left( \|A^\frac54w(s)\|^2 +1\right)\d s.
\end{align}

For any $\varepsilon\in [0,1]$, replacing $\omega$ with $\theta_{-t-\varepsilon}\omega$, it can get
\begin{align*}
& \|A^\frac54w(t,\theta_{-t-\varepsilon}\omega, w(0))\|^2   \\
&\quad  \leq   C  \int_{t-1}^t   \left (1 +\|A^\frac58v(s,\theta_{-t-\varepsilon}\omega, v(0))\|^8+|z(\theta_{s-t-\varepsilon}\omega)|^8 \right ) \left( \|A^\frac54 w(s)\|^2 +1\right) \d s   \\
&\quad  \leq   C  \int_{t+\varepsilon-2}^{t+\varepsilon}   \left (1 +\|A^\frac58v(s,\theta_{-t-\varepsilon}\omega, v(0))\|^8+|z(\theta_{s-t-\varepsilon}\omega)|^8 \right ) \left( \|A^\frac54 w(s)\|^2 +1\right) \d s .
\end{align*}
By  \eqref{mar8.3},  it yields
\begin{align*}
\sup_{\eta\in (t-2,t)}
 \|A^{\frac58}v (\eta ,\theta_{-t } \omega, v(0)) \|^8
&\leq  |\zeta_3(\omega)|^4 ,\quad t\geq T_{\mathfrak B} (\omega) .
\end{align*}
Let
\begin{align*}
 \zeta_6(\omega):=   |\zeta_3(\omega)|^4 +\sup_{s\in(-2,0)} |z(\theta_s\omega)|^8+1,
\end{align*}
here, $\zeta_6(\omega)$ is a tempered random variable  satisfy
\begin{align*}
  \|A^\frac54w(t,\theta_{-t-\varepsilon}\omega, w(0))\|^2     \leq   C \zeta_6(\omega)  \int_{t+\varepsilon-2}^{t+\varepsilon}   \big( \|A^\frac54w(s,\theta_{-t-\varepsilon } \omega, w(0))\|^2 +1\big) \d s
\end{align*}
for any $t\geq T_{\mathfrak B} (\omega)$ uniformly for $\varepsilon\in [0, 1]$.
By virtue of  \eqref{mar8.7}, it yields
\begin{align*}
 \int_{t+\varepsilon-2}^{t+\varepsilon}   \|A^{\frac54}w(s,\theta_{-t-\varepsilon } \omega, w(0))\|^2  \d s \leq \frac 1 \nu  \zeta_3(\omega) +C
\end{align*}
for any $\varepsilon\in [0,1]$ and $t\geq  T_{\mathfrak B} (\omega) $. Hence it can deduce that for any $ t\geq T_{\mathfrak B} (\omega)$
\begin{align} \label{rho}
\sup_{\varepsilon\in [0,1]}
  \|A^\frac54w(t,\theta_{-t-\varepsilon}\omega, w(0))\|^2     \leq   C \zeta_6(\omega)\left(\zeta_3(\omega) +1\right)=: \rho(\omega).
\end{align}
This completes the proof of Lemma \ref{lemma4.4}.
\end{proof}

When $\varepsilon =0$, we can get an $H^\frac52$ random absorbing set.
\begin{theorem}[$H^\frac52$ absorbing set]
\label{theorem4.1}
 Let Assumption \ref{assum} hold and $f\in H$. The  RDS $\phi$ generated by the random fractional three-dimensional NS equations \eqref{2.2} has a  random absorbing set $\mathfrak{B}_{H^\frac52}$, given as a random $H^\frac52$ neighborhood of the global attractor $\mathcal{A}_0$ of the deterministic fractional three-dimensional  NS equations:
\begin{align}
\mathfrak{B}_{H^\frac52}(\omega)=\left \{v\in H^\frac52: \, {\rm dist}_{H^\frac52}(v,\mathcal{A}_{0}) \leq \sqrt{\rho(\omega)} \, \right\}, \quad \omega\in\Omega, \nonumber
\end{align}
where $\rho(\cdot)$ is the tempered random variable defined by \eqref{rho}.  Moreover, the $\mathcal{D}_H$-random attractor $\mathcal{A}$ of the system \eqref{2.2} is a bounded and tempered random set in $H^\frac52$.

\end{theorem}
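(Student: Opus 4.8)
The plan is to read the conclusion off Lemma \ref{lemma4.4} almost directly, after two bookkeeping observations: first, $\|A^\frac54\cdot\|$ is exactly the $H^\frac52$-norm, since $H^s=D(A^{s/2})$ and $s=\frac52$ gives $A^{s/2}=A^\frac54$; second, the deterministic comparison trajectory stays inside $\A_0$, because $\A_0$ is invariant under the semigroup $S$. Concretely, fix any $u(0)\in\A_0$ (nonempty); then $u(t,u(0))=S(t)u(0)\in\A_0$ for all $t\geq0$, so taking $\varepsilon=0$ in Lemma \ref{lemma4.4} gives, for every $v(0)\in\B(\theta_{-t}\omega)$ and every $t\geq T_{\B}(\omega)$,
\begin{align*}
\dist_{H^\frac52}\big(v(t,\theta_{-t}\omega,v(0)),\A_0\big)
&\leq \big\|A^\frac54\big(v(t,\theta_{-t}\omega,v(0))-u(t,u(0))\big)\big\| \\
&\leq \sqrt{\rho(\omega)}.
\end{align*}
Since $u(t,u(0))\in\A_0\subset H^\frac52$ by Lemma \ref{lem:det} and the right-hand side is finite, we also get $v(t,\theta_{-t}\omega,v(0))=\phi(t,\theta_{-t}\omega,v(0))\in H^\frac52$, hence $\phi(t,\theta_{-t}\omega,v(0))\in\mathfrak{B}_{H^\frac52}(\omega)$. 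This shows $\mathfrak{B}_{H^\frac52}$ pullback-absorbs the $H^\frac54$ absorbing set $\B$ of Theorem \ref{theorem4.6}.

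Next I would upgrade this to absorption of every $\mathfrak D\in\D_H$ by the standard two-step cocycle argument. Because $\B$ is itself a $\D_H$-absorbing set (Theorem \ref{theorem4.6}), for any $\mathfrak D\in\D_H$ there is a random time $T_{\mathfrak D}(\cdot)$ with $\phi(s,\theta_{-s}\omega',\mathfrak D(\theta_{-s}\omega'))\subset\B(\omega')$ whenever $s\geq T_{\mathfrak D}(\omega')$. Writing $t=t_1+t_2$ with $t_2=T_{\B}(\omega)$ and $t_1\geq T_{\mathfrak D}(\theta_{-t_2}\omega)$, the cocycle identity $\phi(t,\theta_{-t}\omega,\cdot)=\phi(t_2,\theta_{-t_2}\omega,\phi(t_1,\theta_{-t_1-t_2}\omega,\cdot))$ first lands $\mathfrak D(\theta_{-t}\omega)$ in $\B(\theta_{-t_2}\omega)$ and then, by the previous paragraph, in $\mathfrak{B}_{H^\frac52}(\omega)$. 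Thus $\mathfrak{B}_{H^\frac52}$ absorbs $\mathfrak D$ for all $t$ beyond a random time $T^{*}_{\mathfrak D}(\omega):=T_{\B}(\omega)+T_{\mathfrak D}(\theta_{-T_{\B}(\omega)}\omega)$, so it is a $\D_H$-random absorbing set.

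Finally I would verify the structural properties and the ``moreover'' claim. Since $\A_0$ is deterministic and $\rho$ is a random variable, $\omega\mapsto\dist_{H^\frac52}(x,\mathfrak{B}_{H^\frac52}(\omega))$ is measurable, so $\mathfrak{B}_{H^\frac52}$ is a random set; it is moreover closed in $H^\frac52$, being a sublevel set of the continuous map $\dist_{H^\frac52}(\cdot,\A_0)$ (useful later for Lemma \ref{lem:cui18}). Its $H^\frac52$-radius is at most $\sqrt{\rho(\omega)}+\|\A_0\|_{H^\frac52}$, which is tempered because $\rho$ is tempered (Lemma \ref{lemma4.4}) and $\|\A_0\|_{H^\frac52}<\infty$ (Lemma \ref{lem:det}); hence $\mathfrak{B}_{H^\frac52}$ is bounded and tempered in $H^\frac52$. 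For the attractor $\A$, which exists in $H$ and is tempered in $H$ by Theorem \ref{theorem4.6} (so $\A\in\D_H$), invariance gives $\A(\omega)=\phi(t,\theta_{-t}\omega,\A(\theta_{-t}\omega))$; applying the absorption just established with $\mathfrak D=\A$ at $t\geq T^{*}_{\A}(\omega)$ yields $\A(\omega)\subset\mathfrak{B}_{H^\frac52}(\omega)$, whence $\A$ is a bounded and tempered random set in $H^\frac52$.

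The genuinely hard analytic work---the nonlinear difference estimates producing the uniform $H^\frac52$-distance bound $\rho(\omega)$---is entirely contained in Lemma \ref{lemma4.4}, which I may assume. Given that lemma, the only points requiring care are the invariance of $\A_0$ (so that the comparison solution legitimately lies in $\A_0$ and the distance bound applies), the two-step cocycle absorption extending from $\B$ to all of $\D_H$, and the routine checks of measurability and temperedness of $\mathfrak{B}_{H^\frac52}$; none of these presents a substantial obstacle.
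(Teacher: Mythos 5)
Your proposal is correct and takes essentially the same route as the paper: the paper's own proof of Theorem \ref{theorem4.1} is a one-line appeal to Lemma \ref{lemma4.4} together with Theorem 13 of the cited work of Cui and Li, and your argument simply makes explicit what that citation encapsulates, namely the invariance of $\mathcal{A}_0$ under the deterministic semigroup, the absorption of $\mathfrak{B}$ into the random $H^{\frac52}$-neighborhood of $\mathcal{A}_0$ via the $\varepsilon=0$ case of Lemma \ref{lemma4.4}, the two-step cocycle argument extending absorption to every tempered set in $\mathcal{D}_H$, and the routine closedness, measurability and temperedness checks. There is no gap; your write-up is just a self-contained version of the paper's outsourced argument.
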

\begin{proof}
Inspired by the  Lemma \ref{lemma4.4} and the Theorem 13 in \cite{cui}, this completes the proof of Theorem  \ref{theorem4.1}.
\end{proof}

\section{Local $(H,H^\frac52$)-Lipschitz continuity} \label{sec5}

In this section, we will prove a local $(H, H^\frac52)$-Lipschitz continuity in initial values of the solutions of the random fractional three-dimensional NS  equations \eqref{2.2}.   This will be done step-by-step by   showing the local $(H,H)$-Lipschitz continuity, the local $(H,H^\frac54)$-Lipschitz continuity and finally the local $(H,H^\frac52)$-Lipschitz continuity. Let
 \begin{align*}
 \bar v(t,\omega, \bar v(0)):= v_1(t,\omega, v_1(0))-v_2(t,\omega, v_2(0))
 \end{align*}
be  the difference between two solutions of the system \eqref{2.2}.

\subsection{Local $(H,H)$-Lipschitz continuity}
In this subsection, we will prove a local $(H,H)$-Lipschitz continuity of the solutions of the random fractional three-dimensional NS  equations \eqref{2.2}.

\begin{lemma}\label{lemma51}[Local $(H,H)$-Lipschitz]
 Let Assumption \ref{assum} hold and $f\in H$.
For  any tempered set $\mathfrak D\in \D_H $, then there exist random variables $t_{\mathfrak D}(\cdot)$ and $L_1(\mathfrak D,\cdot) $ such that
 two solutions $v_1$ and $v_2$ of random fractional three-dimensional NS equations \eqref{2.2} corresponding to initial values  $v_{1,0},$ $v_{2,0}$ in $ \mathfrak D( \theta_{-t_{\mathfrak D}(\omega)}\omega),$   respectively,  satisfy
\ben
 & \left \| v_1 \! \left ( t_{\mathfrak D} (\omega) ,\theta_{-t_{\mathfrak D}(\omega)}\omega,  v_{1,0}\right)
 - v_2 \! \left ( t_{\mathfrak D} (\omega) ,\theta_{-t_{\mathfrak D}(\omega)}\omega, v_{2,0}\right) \right \|^2  \\[0.8ex]
 &\quad
 \leq L_1({\mathfrak D}, \omega) \|{v}_{1,0}-v_{2,0}\|^2  ,\quad \omega \in \Omega.
\ee

\end{lemma}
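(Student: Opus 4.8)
The plan is to test the difference equation against $\bar v$ in $H$ and to close a Gronwall estimate whose exponent is \emph{quadratic} in $\|A^{5/8}v_1\|$, so that it is integrable in time via the $L^2_{loc}(H^{5/4})$ bound already contained in Lemma \ref{lemma4.1}. First I would subtract the two copies of \eqref{2.2} solved by $v_1$ and $v_2$. The forcing $f-\nu A^{5/4}hz(\theta_t\omega)+hz(\theta_t\omega)$ is common to both, so it cancels; writing $a=v_1+hz(\theta_t\omega)$ and $b=v_2+hz(\theta_t\omega)$ and using bilinearity together with $a-b=\bar v$, the nonlinear terms combine into $B(a)-B(b)=B(\bar v,v_1+hz(\theta_t\omega))+B(v_2+hz(\theta_t\omega),\bar v)$. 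Hence
\[
\frac{\d \bar v}{\d t}+\nu A^{5/4}\bar v+B(\bar v,v_1+hz(\theta_t\omega))+B(v_2+hz(\theta_t\omega),\bar v)=0 .
\]

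Taking the $H$ inner product with $\bar v$, the term $\big(B(v_2+hz(\theta_t\omega),\bar v),\bar v\big)$ vanishes by the standard orthogonality of the trilinear form, since $v_2+hz(\theta_t\omega)$ is divergence free (both $v_2$ and $h$ lie in $H$). This leaves
\[
\tfrac12\frac{\d}{\d t}\|\bar v\|^2+\nu\|A^{5/8}\bar v\|^2=-\big(B(\bar v,v_1+hz(\theta_t\omega)),\bar v\big).
\]
I would estimate the single remaining trilinear term in the spirit of \eqref{4.4}--\eqref{4.9}: by H\"older's inequality with exponents $(24/7,12/5,24/7)$, the embedding $H^{5/4}\hookrightarrow L^{24/7}$, the embedding placing $\nabla(v_1+hz(\theta_t\omega))$ in $L^{12/5}$, and the Gagliardo--Nirenberg interpolation $\|\bar v\|_{L^{24/7}}\le C\|\bar v\|^{1/2}\|A^{5/8}\bar v\|^{1/2}$, followed by Young's inequality to absorb the dissipation. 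This gives
\[
\big|\big(B(\bar v,v_1+hz(\theta_t\omega)),\bar v\big)\big|\le \tfrac{\nu}{2}\|A^{5/8}\bar v\|^2+C\big(\|A^{5/8}v_1\|^2+|z(\theta_t\omega)|^2\big)\|\bar v\|^2 ,
\]
where $\|A^{5/8}h\|$ has been absorbed into $C$ because $h\in H^{15/4}\subset H^{5/4}$. Consequently $\frac{\d}{\d t}\|\bar v\|^2\le C\big(\|A^{5/8}v_1\|^2+|z(\theta_t\omega)|^2\big)\|\bar v\|^2$.

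Gronwall's lemma then yields, with $\bar v(0)=v_{1,0}-v_{2,0}$,
\[
\|\bar v(t)\|^2\le \|v_{1,0}-v_{2,0}\|^2\exp\Big(C\int_0^t\big(\|A^{5/8}v_1(s)\|^2+|z(\theta_s\omega)|^2\big)\,\d s\Big).
\]
Replacing $\omega$ by $\theta_{-t}\omega$ and invoking the integral estimate \eqref{4.29} of Lemma \ref{lemma4.1}, I would bound $\int_0^t\|A^{5/8}v_1(s,\theta_{-t}\omega,v_{1,0})\|^2\,\d s$; since the exponential weight $e^{(8/\alpha-3)\lambda(s-t)}$ appearing there is bounded below on the fixed interval $[0,t]$, this integral is dominated by $Ce^{(8/\alpha-3)\lambda t}\big(e^{-\lambda t}\|v_{1,0}\|^2+\zeta_1(\omega)\big)$, while $\int_0^t|z(\theta_{s-t}\omega)|^2\,\d s$ is a finite random variable for each fixed $t$. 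Choosing $t_{\mathfrak D}(\omega)$ large enough, as in \eqref{timeB}, so that $\sup_{v_{1,0}\in\mathfrak D(\theta_{-t}\omega)}e^{-\lambda t}\|v_{1,0}\|^2\le1$, the whole exponent becomes a finite random variable $L_1(\mathfrak D,\omega)$ independent of the initial data, which is the assertion. The main obstacle is exactly this last step: because the data lie only in $H$, the energy bound controls $\int_0^t\|A^{5/8}v_1\|^2\,\d s$ but not $\int_0^t\|A^{5/8}v_1\|^{p}\,\d s$ for $p>2$ (the smoothing rate $\|A^{5/8}v_1(s)\|\lesssim s^{-1/2}$ makes higher powers non-integrable near $s=0$), so the trilinear estimate must be arranged --- via the $L^{24/7}$--$L^{12/5}$ H\"older split rather than a cruder $L^4$ split --- to produce precisely the quadratic, hence integrable, coefficient $\|A^{5/8}v_1\|^2$.
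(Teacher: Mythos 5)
Your proposal is correct and follows essentially the same route as the paper's proof: the same difference equation for $\bar v=v_1-v_2$, the same orthogonality cancellation of $\big(B(v_2+hz(\theta_t\omega),\bar v),\bar v\big)$, a trilinear estimate producing the same integrable quadratic coefficient $C\big(\|A^{5/8}v_1\|^2+|z(\theta_t\omega)|^2\big)$, then Gronwall, the bound \eqref{4.29} from Lemma \ref{lemma4.1} after replacing $\omega$ by $\theta_{-t}\omega$, and temperedness of $\mathfrak D$ to fix $t_{\mathfrak D}(\omega)$. The only (immaterial) difference is your H\"older split $L^{24/7}\times L^{12/5}\times L^{24/7}$ combined with Gagliardo--Nirenberg, where the paper uses $L^{12}\times L^{12/5}\times L^{2}$; both yield the identical intermediate bound $C\|\bar v\|\,\|A^{5/8}\bar v\|\,\|A^{5/8}(v_1+hz(\theta_t\omega))\|$.
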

\begin{proof}
Assume that $v_1$ and $v_2$ are two solutions of the system \eqref{2.2} with the initial values $v_{1,0}$ and $v_{2,0}$, respectively. Let $\bar{v}=v_1-v_2$, we introduce the following system
\begin{align}\label{5.1}
\frac{\d\bar{v}}{\d t}+  \nu  A^{\frac54}\bar{v}+B( v_1+hz(\theta_t\omega))-B( v_2+hz(\theta_t\omega))=0.
\end{align}
Taking the inner product of the  system  \eqref{5.1} in $H$ by $\bar{v}$ and  integration by parts,  we can deduce that
\ben
\frac{1}{2}\frac{\d}{\d t}\|\bar{v}\|^2+  \nu \|A^{\frac58}\bar{v}\|^2&=- \big (B(\bar{v},v_1+hz(\theta_t\omega)),\bar{v} \big )
-\big (B(v_2+hz(\theta_t\omega),\bar{v}),\bar{v}\big )\nonumber\\
&=-\big (B(\bar{v},v_1+hz(\theta_t\omega)),\bar{v} \big )\nonumber\\
&\leq \|\bar{v}\|_{L^{12}}\|\nabla(v_1+hz(\theta_t\omega))\|_{L^{\frac{12}{5}}}\|\bar{v}\|\nonumber\\
&\leq C\|\bar{v}\|\|A^{\frac{5}{8}}\bar{v}\|\|A^{\frac{5}{8}}(v_1+hz(\theta_t\omega))\|\nonumber\\
&\leq \frac{\nu}{2}\|A^{\frac{5}{8}}\bar{v}\|^2+C\left (\|A^\frac58v_1\|^2+\|A^\frac58h\|^2|z(\theta_t\omega)|^2 \right)\|\bar{v}\|^2\nonumber\\
&\leq \frac\nu2\|A^{\frac58}\bar{v}\|^2+C \left(\|A^\frac58v_1\|^2+|z(\theta_t\omega)|^2 \right )\|\bar{v}\|^2.
\ee
Moreover, it yields
\[
\frac{\d}{\d t}\|\bar{v}\|^2+  \nu \|A^{\frac58}\bar{v}\|^2
\leq C\left (\|A^\frac58v_1\|^2+|z(\theta_t\omega)|^2 \right) \|\bar{v}\|^2.
\]
By Gronwall's lemma, we have for any $t>0$
\begin{align} \label{mar9.1}
 & \|\bar{v}(t)\|^2+  \nu \int_0^te^{C\int_s^t(\|A^\frac58v_1(\tau) \|^2+|z(\theta_\tau\omega)|^2)\, \d \tau}\|A^{\frac58}\bar{v} (s) \|^2 \d s \nonumber \\
&
\leq e^{C\int_0^t(\|A^\frac58v_1(\tau)\|^2+|z(\theta_\tau\omega)|^2) \d \tau}\|\bar{v}(0)\|^2.
\end{align}

By \eqref{4.29}, it can get for any $t\geq T_1(\omega)$,
\ben
   \int_0^t \|A^\frac58v_1(\tau,\theta_{-t}\omega, v_{1,0})\|^2  \d \tau &\leq e^{ \left(\frac 8 \alpha -3  \right)\lambda t}  \int_0^t e^{ \left( \frac 8 \alpha -3  \right) \lambda (\tau-t)} \|A^\frac58v_1 (\tau,\theta_{-t}\omega, v_{1,0})\|^2   \d \tau  \\
 &   \leq \frac {2}{\nu\alpha} e^{ \left(\frac 8 \alpha -3  \right)\lambda t}  \left( e^{-\lambda t} \|v_{1,0}\|^2 +\zeta_1(\omega) \right) .
 \ee
Hence, by \eqref{mar9.1}, we can deduce for any $t\geq T_1(\omega)$
\ben
 \|\bar v(t,\theta_{-t}\omega, \bar v(0))\|^2
 & \leq  e^{C \int_0^t\left (\|A^\frac58v_1(\tau,\theta_{-t}\omega, v_{1,0})\|^2+|z(\theta_{\tau-t} \omega)|^2 \right)   \d \tau }
 \|\bar{v}(0)\|^2 \\
 &\leq e^{C  e^{ \left(\frac 8 \alpha -3  \right)\lambda t}  \left( e^{-\lambda t} \|v_{1,0}\|^2 +\zeta_1(\omega) \right) +C\int_{-t}^0|z(\theta_{\tau} \omega)|^2  \d \tau } \|\bar{v}(0)\|^2.
\ee
Since $v_{1,0} \in \mathfrak D(\theta_{-t}\omega) $ which is tempered, there exists a random variable $ t_{\mathfrak D} (\omega)  \geq T_1(\omega)$  such that
\[
 e^{-\lambda t_{\mathfrak D}(\omega) } \|v_{1,0}\|^2 \leq  e^{-\lambda t_{\mathfrak D}(\omega) } \left \|  \, \mathfrak D \!  \left (\theta_{-t_{\mathfrak D}(\omega)} \omega \right ) \right\|^2 \leq 1,\quad \omega\in \Omega.
\]
Since $\mathfrak D$ is pullback absorbed by the absorbing set $\mathfrak B$,  then there exists a large enough $t_{\mathfrak D}(\omega)$  such that
\be \label{mar19.2}
 \phi \left ( t_{\mathfrak D}(\omega) , \theta_{-t_{\mathfrak D}(\omega)}\omega, \mathfrak D(\theta_{- t_{\mathfrak D}(\omega) }\omega) \right ) \subset \mathfrak B(\omega), \quad \omega\in \Omega .
\ee
Let a random variable be defined by
\[
L_1({\mathfrak D}, \omega)=e^{C  e^{ \left(\frac 8 \alpha -3  \right)\lambda  t_{\mathfrak D}(\omega)}    ( 1+\zeta_1(\omega)  ) + C\int_{-t_{\mathfrak D}(\omega)}^0|z(\theta_{\tau} \omega)|^2 \, \d \tau } ,\quad \omega\in \Omega,
\]
then
\be \label{mar18.8}
\left \|\bar v  \! \left ( t_{\mathfrak D} (\omega) ,\theta_{-t_{\mathfrak D}(\omega)}\omega, \bar v(0) \right) \right \|^2
 \leq L_1({\mathfrak D}, \omega) \|\bar{v}(0)\|^2  ,\quad
\ee
where $v_{1,0},$ $v_{2,0}\in \mathfrak D \big ( \theta_{-t_{\mathfrak D}( \omega)}\omega \big),$  $\omega\in \Omega$. This completes the proof of Lemma \ref{lemma51}.
\end{proof}

\subsection{Local $(H,H^\frac54)$-Lipschitz continuity}

In this subsection, we will prove a local $(H,H^\frac54)$-Lipschitz continuity of the solutions of the random fractional three-dimensional NS  equations \eqref{2.2}. The main result will be obtained  by two steps. By   \eqref{4.19}, we can get an $H^\frac54$ random absorbing set $\mathfrak B$ with initial values. Moreover, we will prove the  initial values  in each tempered set $\mathfrak D $ in $\D_H$.

\begin{lemma}\label{5.10}[Local $(H,H^\frac54)$-Lipschitz on $\mathfrak B$]
 Let Assumption \ref{assum} hold and $f\in H$. Then  there exist random variables $\tau_\omega$ and $L_2(\mathfrak D,\omega) $ such that any
 two solutions $v_1$ and $v_2$ of the random fractional three-dimensional NS  equations \eqref{2.2} corresponding to initial values  $v_{1,0},$ $v_{2,0}$ in $ \mathfrak B( \theta_{- \tau_\omega}\omega),$   respectively,   satisfy
\be  \label{mar18.9}
 \left \|A^{\frac58}(v_1  (  \tau_\omega,\theta_{-\tau_\omega}\omega,  v_{1,0} )
 - v_2   ( \tau_\omega,\theta_{-\tau_\omega}\omega, v_{2,0} ) )\right \|^2
 \leq L_2(\mathfrak D, \omega) \|{v}_{1,0}-v_{2,0}\|^2  ,\quad \omega \in \Omega,
\ee
where $ \mathfrak B $ is defined by \eqref{4.19}.
\end{lemma}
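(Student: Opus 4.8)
The plan is to control the difference $\bar v = v_1-v_2$, which solves the homogeneous equation \eqref{5.1}, in the $H^{\frac54}$-norm by testing that equation against $A^{\frac54}\bar v$, so that the dissipation becomes $\nu\|A^{\frac54}\bar v\|^2$ and the time-derivative term becomes $\tfrac12\frac{\d}{\d t}\|A^{\frac58}\bar v\|^2$. Writing $V_i=v_i+hz(\theta_t\omega)$ and using $B(V_1)-B(V_2)=B(\bar v,V_1)+B(V_2,\bar v)$, the nonlinear contribution is $-\big(B(\bar v,V_1)+B(V_2,\bar v),\,A^{\frac54}\bar v\big)$. I would estimate each term by H\"older with exponents $(12,\tfrac{12}{5},2)$, the Sobolev embedding $H^{\frac54}\hookrightarrow L^{12}$ together with $\|\nabla\,\cdot\,\|_{L^{12/5}}\leq C\|A^{\frac58}\,\cdot\,\|$ (both sharp in dimension three), and Young's inequality, exactly as in the proof of Lemma \ref{lemma4.3}. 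Absorbing $\|A^{\frac58}V_i\|^2\leq C(\|A^{\frac58}v_i\|^2+|z(\theta_t\omega)|^2)$ (here $h\in H^{\frac{15}{4}}$ makes $\|A^{\frac58}h\|$ a constant) gives the differential inequality
\[
\frac{\d}{\d t}\|A^{\frac58}\bar v\|^2+\nu\|A^{\frac54}\bar v\|^2\leq g(t)\,\|A^{\frac58}\bar v\|^2,\qquad g(t):=C\big(\|A^{\frac58}v_1\|^2+\|A^{\frac58}v_2\|^2+|z(\theta_t\omega)|^2\big).
\]

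The crucial point is the gain of regularity: the left side measures $\bar v$ in $H^{\frac54}$, whereas the target bound is against $\|\bar v(0)\|^2$ in $H$. A direct Gronwall argument on the inequality above would only give $(H^{\frac54},H^{\frac54})$-continuity, so I would instead borrow the dissipation already exploited in Lemma \ref{lemma51}: from \eqref{mar9.1}, since the weight there is $\geq 1$ for $s\leq t$, one has $\nu\int_{t-1}^t\|A^{\frac58}\bar v(s)\|^2\,\d s\leq e^{C\int_0^t(\|A^{\frac58}v_1\|^2+|z(\theta_\tau\omega)|^2)\d\tau}\|\bar v(0)\|^2$, i.e. the $(H,H)$ energy estimate converts the $H$-norm of the initial datum into an $L^2$-in-time control of $\|A^{\frac58}\bar v\|^2$. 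I would then run a uniform-Gronwall step on the displayed inequality: Gronwall from $s$ to $t$ gives $\|A^{\frac58}\bar v(t)\|^2\leq e^{\int_{t-1}^t g}\|A^{\frac58}\bar v(s)\|^2$ for $s\in(t-1,t)$, and integrating $s$ over $(t-1,t)$ gives $\|A^{\frac58}\bar v(t)\|^2\leq e^{\int_{t-1}^t g}\int_{t-1}^t\|A^{\frac58}\bar v(s)\|^2\,\d s$. Combining the two turns the $L^2$-in-time bound into the pointwise $H^{\frac54}$ bound $\|A^{\frac58}\bar v(t)\|^2\leq \tfrac1\nu e^{\int_{t-1}^t g}\,e^{C\int_0^t(\ldots)}\|\bar v(0)\|^2$.

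To finish I would replace $\omega$ by $\theta_{-\tau_\omega}\omega$ and set $t=\tau_\omega$, choosing $\tau_\omega$ large enough (say $\tau_\omega\geq T_B(\omega)+1$) that the smoothing window $(\tau_\omega-1,\tau_\omega)$ is available and the $H$- and $H^{\frac54}$-bounds hold. Since $v_{1,0},v_{2,0}\in\mathfrak B(\theta_{-\tau_\omega}\omega)$, Lemma \ref{lemma4.1} (through \eqref{4.29}) and Lemma \ref{lem:H1bound} control the integrals $\int_0^{\tau_\omega}\|A^{\frac58}v_i\|^2$ and $\int_{\tau_\omega-1}^{\tau_\omega}\|A^{\frac58}v_i\|^2$: one inserts the weight $e^{(\frac8\alpha-3)\lambda(s-\tau_\omega)}\leq 1$ and argues exactly as in the passage following \eqref{4.14} (and in the proof of Lemma \ref{lem:H1bound}) to bound the unweighted integrals by tempered random variables. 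All Gronwall exponents are therefore finite random variables; collecting them defines $L_2(\mathfrak D,\omega)$ and yields \eqref{mar18.9}.

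The hard part is precisely this regularity gain from $H$ to $H^{\frac54}$: the $H^{\frac54}$ differential inequality propagates but does not \emph{create} $H^{\frac54}$-smallness, so the estimate must import the dissipative $\int\|A^{\frac58}\bar v\|^2$ control from the $(H,H)$ level and upgrade it pointwise through the uniform-Gronwall step — the standard parabolic smoothing mechanism. A secondary technical care is that the Gronwall exponents remain almost surely finite, which is guaranteed because the trajectories start in the $H^{\frac54}$-absorbing set $\mathfrak B$ defined in \eqref{4.19}, so $\|A^{\frac58}v_i\|$ and its time averages stay controlled throughout $[0,\tau_\omega]$.
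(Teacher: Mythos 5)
Your proposal is correct and follows essentially the same route as the paper: the same testing of \eqref{5.1} against $A^{\frac54}\bar v$ to get the differential inequality \eqref{mar9.2}, the same import of the $L^2$-in-time control of $\|A^{\frac58}\bar v\|^2$ from the $(H,H)$ estimate \eqref{mar9.1} (weight $\geq 1$), the same uniform-Gronwall step over a unit window to upgrade to a pointwise $H^{\frac54}$ bound, and the same use of \eqref{4.29} plus temperedness to turn the Gronwall exponents into the random variable $L_2(\mathfrak D,\omega)$. The only cosmetic differences (integration over $(t-1,t)$ rather than $(t-1,t-\tfrac12)$, and not insisting that $\tau_\omega$ also realize the self-absorption \eqref{mar18.2}, which is only needed for the later composition in Lemma \ref{lem:H1}) do not affect the validity of the argument.
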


\begin{proof}
Taking the inner product of the system  \eqref{5.1} in $H$ by $A^{\frac54}\bar{v}$ and  by integration by parts, we can deduce that
\begin{align}\label{5.2}
\frac{1}{2}\frac{\d}{\d t}\|A^\frac58\bar{v}\|^2+ \nu \|A^\frac54\bar{v}\|^2&=- \big (B(\bar{v},v_1+hz(\theta_t\omega)),A^\frac54\bar{v}\big )\nonumber\\
&-\big (B(v_2+hz(\theta_t\omega),\bar{v}),A^\frac54\bar{v}\big ) .
\end{align}
For the first term on the right hand side of the system \eqref{5.2}, applying the H\"{o}lder inequality, the Sobolev embedding inequality and the Young inequality, we have
\begin{align}\label{5.3}
 \big| \big (B(\bar{v},v_1+hz(\theta_t\omega)),A^\frac54\bar{v}\big )\big|
 &\leq \|\bar{v}\|_{L^{12}}\|\nabla(v_1+hz(\theta_t\omega))\|_{L^{\frac{12}{5}}}\|A^\frac54\bar{v}\|\nonumber\\
&\leq C\|A^\frac58\bar{v}\|\|A^\frac54\bar{v}\|\|A^\frac58(v_1+hz(\theta_t\omega))\|\nonumber\\
&\leq \frac{\nu}{4}\|A^\frac54\bar{v}\|^2+C\|A^\frac58(v_1+hz(\theta_t\omega))\|^2\|A^\frac58\bar{v}\|^2\nonumber\\
&\leq \frac{\nu}{4}\|A^\frac54\bar{v}\|^2+C(\|A^\frac58v_1\|^2+\|A^\frac58h\|^2|z(\theta_t\omega)|^2)\|A^\frac58\bar{v}\|^2\nonumber\\
&\leq \frac{\nu}{4}\|A^\frac54\bar{v}\|^2+C(\|A^\frac58v_1\|^2+|z(\theta_t\omega)|^2)\|A^\frac58\bar{v}\|^2
.
\end{align}
For the second term on the right hand side of the system \eqref{5.2}, by the similar method, we also have
\begin{align}\label{5.4}
 \big| \big (B(v_2+hz(\theta_t\omega),\bar{v}),A^\frac54\bar{v}\big )\big| &\leq \|\nabla\bar{v}\|_{L^{\frac{12}{5}}}\|v_2+hz(\theta_t\omega)\|_{L^{12}}\|A^\frac54\bar{v}\|\nonumber\\
&\leq C\|A^\frac58\bar{v}\|\|A^\frac58(v_2+hz(\theta_t\omega))\|\|A^\frac54\bar{v}\|\nonumber\\
&\leq \frac{\nu}{4}\|A^\frac54\bar{v}\|^2+C\|A^\frac58(v_2+hz(\theta_t\omega))\|^2\|A^\frac58\bar{v}\|^2\nonumber\\
&\leq \frac{\nu}{4}\|A^\frac54\bar{v}\|^2+C(\|A^\frac58v_2\|^2+\|A^\frac58h\|^2|z(\theta_t\omega)|^2)\|A^\frac58\bar{v}\|^2\nonumber\\
&\leq\frac{\nu}{4}\|A^\frac54\bar{v}\|^2+C(\|A^\frac58v_2\|^2+|z(\theta_t\omega)|^2)\|A^\frac58\bar{v}\|^2.
\end{align}
Inserting \eqref{5.3} and \eqref{5.4} into \eqref{5.2} yields
\begin{align} \label{mar9.2}
\frac{\d}{\d t}\|A^\frac58\bar{v}\|^2+  \nu \|A^\frac54\bar{v}\|^2
\leq C\left(\|A^\frac58v_1\|^2+\|A^\frac58v_2\|^2+|z(\theta_t\omega)|^2\right)\|A^\frac58\bar{v}\|^2.
\end{align}
Applying Gronwall's lemma to \eqref{mar9.2}, we have for $s\in(t-1, t-\frac 12)$, $t\geq 1$,
\begin{align*}
 & \|A^\frac58\bar{v}(t)\|^2
 +  \nu  \int_s^te^{C\int_\eta^t(\|A^\frac58v_1(\tau)\|^2+\|A^\frac58v_2(\tau)\|^2+|z(\theta_\tau\omega)|^2) \d \tau}\|A^\frac54\bar{v}(\eta) \|^2  \d \eta
\nonumber\\
&\quad \leq e^{ C\int_s^t(\|A^\frac58v_1(\tau) \|^2+\|A^\frac58v_2(\tau)\|^2+|z(\theta_\tau\omega)|^2) \d \tau}\|A^\frac58\bar{v}(s)\|^2, \nonumber
\end{align*}
and  integrating  $s$ over $(t-1,t-\frac 12)$  yields
\begin{align*}
 & \|A^\frac58\bar{v}(t)\|^2
 + \nu \int_{t-\frac 12} ^te^{C\int_\eta^t(\|A^\frac58v_1(\tau)\|^2+\|A^\frac58v_2(\tau)\|^2+|z(\theta_\tau\omega)|^2) \d \tau}\|A^\frac54\bar{v}(\eta) \|^2 \d \eta
\nonumber\\
&\quad \leq  2  \int_{t-1} ^{t-\frac 12}   e^{ C\int_s^t(\|A^\frac58v_1(\tau) \|^2+\|A^\frac58v_2(\tau)\|^2+|z(\theta_\tau\omega)|^2) \d \tau}\|A^\frac58\bar{v}(s)\|^2   \d s \\
& \quad \leq  2e^{ C\int_{t-1}^t(\|A^\frac58v_1(\tau) \|^2+\|A^\frac58v_2(\tau)\|^2+|z(\theta_\tau\omega)|^2)\d \tau}  \int_{t-1} ^{t-\frac 12} \|A^\frac58\bar{v}(s)\|^2 \d s
.
\end{align*}
By \eqref{mar9.1},  it deduces for $t\geq 1$
\begin{align}
 & \|A^\frac58\bar{v}(t)\|^2
 + \nu\int_{t-\frac 12} ^te^{C\int_\eta^t(\|A^\frac58v_1(\tau)\|^2+\|A^\frac58v_2(\tau)\|^2+|z(\theta_\tau\omega)|^2) \d \tau}\|A^\frac54\bar{v}(\eta) \|^2  \d \eta \nonumber  \\
  &\quad  \leq  Ce^{ C\int_{0}^t(\|A^\frac58v_1(\tau) \|^2+\|A^\frac58v_2(\tau)\|^2+|z(\theta_\tau\omega)|^2) \d \tau}
   \|\bar{v}(0)\|^2
. \label{mar19.1}
\end{align}

Since the absorbing set $\mathfrak B$ itself belongs to the attraction universe $\D_H$,  it pullback absorbs itself. Then  there exists a random variable $\tau_\omega \geq 1+t_{\mathfrak D}(\omega)$ such that
\begin{align}\label{mar18.2}
 \phi \big(\tau_\omega,\theta_{-\tau_\omega}\omega, \B(\theta_{-\tau_\omega}\omega)\big)
 \subset \B(\omega), \quad \omega\in \Omega,
\end{align}
here, $t_{\mathfrak D}(\omega)$ is defined by Lemma \ref{lemma51}. When $t=\tau_\omega$, we  replace $\omega$ with $\theta_{-\tau_\omega} \omega$ in \eqref{mar19.1}  deduce that
\begin{align}\label{mar18.1}
 & \big \|
 A^{\frac 58}  \bar {v}(\tau_\omega,\theta_{-\tau_\omega}\omega,  \bar v(0))  \big \| {^2}
 \nonumber \\
  &\quad  \leq  Ce^{ C\int_{0}^{\tau_\omega}  \sum_{i=1}^2\|A^\frac58v_i(s, \theta_{-\tau_\omega}\omega, v_i(0)) \|^2 \d s+ C\int^0_{-\tau_\omega} |z(\theta_s\omega)|^2 \d s}
   \|\bar v(0) \|^2 .
\end{align}
By \eqref{4.29}, it yields for any $\tau_\omega\geq T_1(\omega)$,
\begin{align}\label{5.8}
&\int_0^{\tau_\omega} \sum_{i=1}^2\|A^\frac58v_i(\tau,\theta_{-\tau_\omega}\omega, v_{i,0})\|^2  \d \tau \nonumber\\
&\leq \sum_{i=1}^2e^{ \left(\frac 8 \alpha -3  \right)\lambda \tau_\omega}  \int_0^{\tau_\omega} e^{ \left( \frac 8 \alpha -3  \right) \lambda (\tau-\tau_\omega)} \|A^\frac58v_i (\tau,\theta_{-\tau_\omega}\omega, v_{i,0})\|^2   \d \tau  \nonumber\\
 &   \leq \sum_{i=1}^2\frac {2}{\alpha\nu} e^{ \left(\frac 8 \alpha -3  \right)\lambda \tau_\omega}  \left( e^{-\lambda \tau_\omega} \|v_{i,0}\|^2 +2\zeta_1(\omega) \right).
\end{align}
Inserting \eqref{5.8} into \eqref{mar18.1}, then we also get
\begin{align}
 & \big \|
 A^{\frac 58}  \bar {v}(\tau_\omega,\theta_{-\tau_\omega}\omega,  \bar v(0))  \big \| {^2}
 \nonumber \\
  &  \leq  Ce^{ Ce^{ \left(\frac 8 \alpha -3  \right)\lambda \tau_\omega}  \left( e^{-\lambda \tau_\omega} (\|v_{1,0}\|^2+\|v_{2,0}\|^2) +2\zeta_1(\omega) \right)+ C\int^0_{-\tau_\omega} |z(\theta_s\omega)|^2 \d s}
   \|\bar v(0) \|^2 .
\end{align}
Since $v_{i,0} \in \mathfrak D(\theta_{-\tau_\omega}\omega) $ which is tempered, then there exists a random variable $ \tau_{\omega} (\omega)  \geq t_{\mathfrak D}(\omega)$  such that
\begin{align*}
 e^{-\lambda \tau_{\omega}(\omega) } (\|v_{1,0}\|^2+\|v_{2,0}\|^2) \leq  e^{-\lambda \tau_{\omega}(\omega) } 2\left \|   \mathfrak D \!  \left (\theta_{-\tau_{\omega}(\omega)} \omega \right ) \right\|^2 \leq 2,\quad \omega\in \Omega.
\end{align*}
Then, we get
\begin{align}
 & \big \|
 A^{\frac 58}  \bar {v}(\tau_\omega,\theta_{-\tau_\omega}\omega,  \bar v(0))  \big \| {^2}
 \nonumber \\
  &  \leq  Ce^{ Ce^{ C\tau_\omega}  \left( 1 +\zeta_1(\omega) \right)+ C\int^0_{-\tau_\omega} |z(\theta_s\omega)|^2 \d s}
   \|\bar v(0) \|^2 .
\end{align}
Let the random variable
\begin{align*}
  L_2(\mathfrak D, \omega):=  Ce^{ C e^{C\tau_\omega}  \zeta_1( \omega) }
,\quad \omega\in \Omega,
\end{align*}
This completes the proof of Lemma \ref{5.10}.
\end{proof}

Nextly, we will prove  the local $(H,H^{\frac54})$-Lipschitz continuity for initial values from any tempered set $\mathfrak D$ in $H$.
\begin{lemma}[Local $(H,H^{\frac54})$-Lipschitz]  \label{lem:H1}
 Let Assumption \ref{assum} hold and $f\in H$. For   any tempered set   $\mathfrak D\in \D_H$, then there exist random variables $ T_1(\mathfrak D ,\cdot )  $ and $L_3(\mathfrak D,\cdot) $ such that any
 two solutions $v_1$ and $v_2$ of random fractional three-dimensional NS equations \eqref{2.2} corresponding to initial values  $v_{1,0},$ $v_{2,0}$ in $ \mathfrak D( \theta_{-T_1(\mathfrak D,\omega)}\omega),$   respectively,  satisfy
\begin{align}\label{mar19.4}
  \left \| A^\frac58( v_1  (T_1 ,\theta_{-T_1}\omega,  v_{1,0} )
 - v_2   ( T_1 ,\theta_{-T_1}\omega, v_{2,0} ) )\right \|^2
 \leq L_3({\mathfrak D}, \omega) \|{v}_{1,0}-v_{2,0}\|^2  ,
\end{align}
where $T_1= T_1(\mathfrak D, \omega)$, $\omega\in \Omega$. Moreover, $ v (T_1 ,\theta_{-T_1}\omega,  v_{0}) \in \mathfrak B(\omega)$ for any $v_0\in \mathfrak D(\theta_{-T_1}\omega) $.
\end{lemma}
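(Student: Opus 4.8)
The plan is to establish \eqref{mar19.4} by composing, through the cocycle property of $\phi$, the two Lipschitz estimates already in hand: the local $(H,H)$-continuity of Lemma \ref{lemma51} and the local $(H,H^{\frac54})$-continuity on the absorbing set $\mathfrak B$ of Lemma \ref{5.10}. The underlying idea is that a pair of initial data taken from a tempered set $\mathfrak D$ should first be driven into $\mathfrak B$ over a finite random time, while the $H$-distance between the two trajectories stays controlled, and then, once both states lie inside $\mathfrak B$, the $H^{\frac54}$-distance can be bounded by the $H$-distance at the moment of entry.

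First I would fix the two random times supplied by the earlier lemmas. Let $\tau_\omega$ be the time from Lemma \ref{5.10}, so that the self-absorption \eqref{mar18.2} holds, and set
\begin{align*}
t_1:=t_{\mathfrak D}(\theta_{-\tau_\omega}\omega),\qquad T_1=T_1(\mathfrak D,\omega):=\tau_\omega+t_1,
\end{align*}
where $t_{\mathfrak D}$ is the time from Lemma \ref{lemma51} read at the shifted sample point $\theta_{-\tau_\omega}\omega$. Using the identity $\theta_{-t_1}\theta_{-\tau_\omega}\omega=\theta_{-T_1}\omega$, the cocycle property splits the flow as
\begin{align*}
\phi(T_1,\theta_{-T_1}\omega,v_0)=\phi\big(\tau_\omega,\theta_{-\tau_\omega}\omega,\,\phi(t_1,\theta_{-T_1}\omega,v_0)\big).
\end{align*}

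Next I would control the inner evolution. Applying Lemma \ref{lemma51} at the sample point $\theta_{-\tau_\omega}\omega$ (that is, replacing $\omega$ by $\theta_{-\tau_\omega}\omega$ throughout its statement), the intermediate states $w_i:=\phi(t_1,\theta_{-T_1}\omega,v_{i,0})$ obey
\begin{align*}
\|w_1-w_2\|^2\leq L_1(\mathfrak D,\theta_{-\tau_\omega}\omega)\,\|v_{1,0}-v_{2,0}\|^2,
\end{align*}
and, crucially, the absorption property \eqref{mar19.2} read at $\theta_{-\tau_\omega}\omega$ guarantees $w_i\in\mathfrak B(\theta_{-\tau_\omega}\omega)$. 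With the entry into $\mathfrak B$ thus secured, I would feed $w_1,w_2$ into Lemma \ref{5.10} to get
\begin{align*}
\big\|A^{\frac58}\big(\phi(\tau_\omega,\theta_{-\tau_\omega}\omega,w_1)-\phi(\tau_\omega,\theta_{-\tau_\omega}\omega,w_2)\big)\big\|^2\leq L_2(\mathfrak D,\omega)\,\|w_1-w_2\|^2.
\end{align*}
Chaining the two displays yields \eqref{mar19.4} with $L_3(\mathfrak D,\omega):=L_2(\mathfrak D,\omega)\,L_1(\mathfrak D,\theta_{-\tau_\omega}\omega)$, while the final claim $v(T_1,\theta_{-T_1}\omega,v_0)\in\mathfrak B(\omega)$ follows by combining $w_i\in\mathfrak B(\theta_{-\tau_\omega}\omega)$ with the self-absorption \eqref{mar18.2}.

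I expect no genuine analytic obstacle, since every differential inequality was already carried out in Lemmas \ref{lemma51} and \ref{5.10}; the one delicate point is the bookkeeping of the $\theta_t$-shifts, so that the endpoints of the inner flow land exactly in $\mathfrak B(\theta_{-\tau_\omega}\omega)$, which is precisely the admissible initial domain for Lemma \ref{5.10}, and so that $T_1$ and the composed constant $L_3$ remain bona fide random variables.
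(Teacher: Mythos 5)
Your proposal is correct and follows essentially the same route as the paper: the same splitting $T_1=\tau_\omega+t_{\mathfrak D}(\theta_{-\tau_\omega}\omega)$ via the cocycle property, the same application of Lemma \ref{lemma51} at the shifted sample point $\theta_{-\tau_\omega}\omega$ together with the absorption \eqref{mar19.2} to land the intermediate states in $\mathfrak B(\theta_{-\tau_\omega}\omega)$, then Lemma \ref{5.10} and the composed constant $L_3(\mathfrak D,\omega)=L_2(\mathfrak D,\omega)\,L_1(\mathfrak D,\theta_{-\tau_\omega}\omega)$. The final membership $v(T_1,\theta_{-T_1}\omega,v_0)\in\mathfrak B(\omega)$ is also argued exactly as in the paper, by combining entry into $\mathfrak B(\theta_{-\tau_\omega}\omega)$ with the self-absorption \eqref{mar18.2}.
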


\begin{proof}
Since ${\mathfrak D} \in \D_H$   is pullback absorbed by $\mathfrak B$, by \eqref{mar19.2}, then  there exists  a random variable $  t_{\mathfrak D} (\cdot) $ such that
\begin{align}\label{mar19.5}
 \phi \left (  t_{\mathfrak D}(\omega) , \theta_{-  t_{\mathfrak D}(\omega)}\omega, \mathfrak D(\theta_{-  t_{\mathfrak D}(\omega)}\omega) \right) \subset \mathfrak B(\omega), \quad \omega\in \Omega.
\end{align}
Hence,    for any $ \omega\in \Omega $   we can deduce that for \eqref{mar18.8} and \eqref{mar18.9}
\begin{align*}
&   \left \| A^\frac58 \bar v  \!\left( \tau_\omega+   t_{\mathfrak D}(\theta_{-\tau_\omega} \omega)   , \, \theta_{-\tau_\omega  - t_{\mathfrak D}(\theta_{-\tau_\omega} \omega)}
 \omega, \, \bar v(0)\right)
   \right \|^2
    \nonumber \\
         &= \left \| A^\frac58\bar v  \! \left( \tau_\omega  , \, \theta_{-\tau_\omega   }
 \omega, \,  \bar v \big(   t_{\mathfrak D}(\theta_{-\tau_\omega} \omega) ,\,  \theta_{-\tau_\omega- t_{\mathfrak D}(\theta_{-\tau_\omega} \omega)  }
 \omega,  \, \bar v(0)\big) \right)
   \right \|^2 \nonumber
   \\
 &
   \leq  L_2  (\mathfrak D,   \omega  )   \left \|  \bar v \!  \left( t_{\mathfrak D}(\theta_{-\tau_\omega} \omega),\,  \theta_{-\tau_\omega- t_{\mathfrak D}(\theta_{-\tau_\omega} \omega)    }
 \omega, \,  \bar v(0) \right)
   \right \|^2   \nonumber \\
 & \leq     L_2  (\mathfrak D,   \omega  )   L_1  \big (\mathfrak D, \theta_{-\tau_\omega}\omega  \big)  \|\bar{v}(0)\|^2
 \nonumber
\end{align*}
for any $v_{1,0},$ $v_{2,0}\in \mathfrak D \left (  \theta_{-\tau_\omega  - t_{\mathfrak D}(\theta_{-\tau_\omega} \omega)} \omega \right)$.  Let
 \begin{align*}
 & T_1({\mathfrak D,\omega}) := \tau_\omega + t_{\mathfrak D}(\theta_{-\tau_\omega} \omega) ,  \\
 & L_3(\mathfrak D,\omega):=  L_2 \left (\mathfrak D,    \omega \right )   L_1 \left(\mathfrak D, \theta_{-\tau_\omega}\omega  \right) ,
\end{align*}
then we can get \eqref{mar19.4}.

By the definition \eqref{mar19.5}  of $ t_{\mathfrak D}(\theta_{-\tau_\omega} \omega)$, we can get for any $v_0\in \mathfrak D(\theta_{-T_1}\omega) $,
\begin{align*}
 y:= v \left(    t_{\mathfrak D}(\theta_{-\tau_\omega} \omega)   , \, \theta_{- t_{\mathfrak D}(\theta_{-\tau_\omega} \omega)} \circ \theta_{-\tau_\omega}
 \omega, \,  v(0)\right) \in \mathfrak B(\theta_{-\tau_\omega}\omega).
\end{align*}
Moreover, by the definition \eqref{mar18.2}  of $\tau_\omega$ we can deduce that
\[
 v(\tau_\omega, \theta_{-\tau_\omega} \omega, y) \in \mathfrak B(\omega).
\]
Hence, it yields for any $v_0\in \mathfrak D(\theta_{-T_1}\omega) $,
\begin{align*}
    v\left (T_1 ,\theta_{-T_1}\omega,  v_{0}\right)
    & =  v \left( \tau_\omega+   t_{\mathfrak D}(\theta_{-\tau_\omega} \omega)   , \, \theta_{- t_{\mathfrak D}(\theta_{-\tau_\omega} \omega)}(\theta_{-\tau_\omega}
 \omega), \,  v(0)\right)\\
 &= v(\tau_\omega, \theta_{-\tau_\omega} \omega, y)
  \in \mathfrak B(\omega).
\end{align*}
This completes the proof of Lemma \ref{lem:H1}.

 \end{proof}

\subsection{Local $(H , H^{\frac52})$-Lipschitz continuity}

In this subsection, we will prove a local $(H,H^\frac52)$-Lipschitz continuity of the solutions of the random fractional three-dimensional NS  equations \eqref{2.2}.  We first introduce the following useful estimate as  \eqref{mar9.4} and prove a local $(H^{\frac54}, H^{\frac52})$-Lipschitz on $\mathfrak B$. Finally, we will prove the main result of this section as Theorem \ref{theorem5.6}.

\begin{lemma}  Let Assumption \ref{assum} hold and $f\in H$,
  the solutions $v$ of random fractional three-dimensional NS equations \eqref{2.2} corresponding to initial values  $v(0)$  satisfy
\begin{align}
\sup_{s\in [t-1,t]}
 \|A^{\frac54}v(s,\theta_{-t}\omega,  v(0)) \|^2
  \leq     \rho(\omega) +\|\mathcal A_0\|_{H^\frac52}^2,
 \quad  t\geq T_{\mathfrak B} (\omega)+ 1,\label{mar9.4}
\end{align}
where  $v(0) \in\mathfrak B (\theta_{-t} \omega)$ and  $T_{\mathfrak B} $ is the random variable defined by   \eqref{timeB}.
\end{lemma}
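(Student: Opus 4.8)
The plan is to obtain this estimate as an almost immediate consequence of Lemma \ref{lemma4.4}, combined with the $H^{\frac52}$-boundedness of the deterministic attractor $\mathcal{A}_0$ from Lemma \ref{lem:det}, once the time argument has been reindexed appropriately. Throughout, recall that $\|A^{\frac54}\cdot\| = \|\cdot\|_{H^{\frac52}}$.

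First I would convert the supremum over $s\in[t-1,t]$ into the shift supremum already built into Lemma \ref{lemma4.4}. For $s\in[t-1,t]$ set $\varepsilon:=t-s\in[0,1]$; then $-t=-s-\varepsilon$, so that
\[
 v(s,\theta_{-t}\omega,v(0)) = v(s,\theta_{-s-\varepsilon}\omega,v(0)),
\]
which is exactly the quantity estimated in Lemma \ref{lemma4.4} with its running time taken to be $s$ and its shift parameter taken to be $\varepsilon$. I would then check the hypotheses of that lemma: since $t\geq T_{\mathfrak B}(\omega)+1$ and $s\geq t-1$, we have $s\geq T_{\mathfrak B}(\omega)$, and moreover $v(0)\in\mathfrak B(\theta_{-t}\omega)=\mathfrak B(\theta_{-s-\varepsilon}\omega)$. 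Hence Lemma \ref{lemma4.4} applies and yields, uniformly in $s\in[t-1,t]$,
\[
 \big\|A^{\frac54}v(s,\theta_{-t}\omega,v(0)) - A^{\frac54}u(s,u_0)\big\|^2 \leq \rho(\omega),
\]
where $u(\cdot,u_0)$ is the solution of \eqref{2.1} issuing from some $u_0\in\mathcal{A}_0$.

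Second, by invariance of $\mathcal{A}_0$ the comparison trajectory stays in the attractor, $u(s,u_0)\in\mathcal{A}_0$ for all $s\geq0$, and Lemma \ref{lem:det} guarantees $\mathcal{A}_0$ is bounded in $H^{\frac52}$; therefore $\|A^{\frac54}u(s,u_0)\| = \|u(s,u_0)\|_{H^{\frac52}} \leq \|\mathcal{A}_0\|_{H^{\frac52}}$. Combining the last two displays by the triangle inequality gives, for every $s\in[t-1,t]$,
\[
 \|A^{\frac54}v(s,\theta_{-t}\omega,v(0))\| \leq \sqrt{\rho(\omega)} + \|\mathcal{A}_0\|_{H^{\frac52}},
\]
and squaring (controlling the cross term $2\sqrt{\rho(\omega)}\,\|\mathcal{A}_0\|_{H^{\frac52}}$ by Young's inequality and absorbing it into the tempered variable $\rho$, in accordance with the paper's line-to-line constant convention) produces the stated bound $\rho(\omega)+\|\mathcal{A}_0\|_{H^{\frac52}}^2$ uniformly in $s$. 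Taking the supremum over $s\in[t-1,t]$ finishes the argument.

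The only genuinely substantive point --- and the step I expect to be the main obstacle for a reader to see --- is the reindexing in the first paragraph: one must notice that the shift parameter $\varepsilon\in[0,1]$ in Lemma \ref{lemma4.4} is precisely what allows a fixed-endpoint $H^{\frac52}$ estimate at time $s=t-\varepsilon$ to be written with the single pullback base point $\theta_{-t}\omega$, so that one invocation of Lemma \ref{lemma4.4} covers the entire interval $s\in[t-1,t]$ at once. Everything after that --- the boundedness of $\mathcal{A}_0$ in $H^{\frac52}$ and the triangle/Young bookkeeping --- is routine.
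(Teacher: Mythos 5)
Your proposal is correct and follows essentially the same route as the paper: the paper's own proof consists precisely of the reindexing $\theta_{-t}\omega=\theta_{-s-(t-s)}\omega$ with $\varepsilon=t-s\in[0,1]$ (valid since $s\geq T_{\mathfrak B}(\omega)$ when $t\geq T_{\mathfrak B}(\omega)+1$), followed by Lemma \ref{lemma4.4} and the $H^{\frac52}$-boundedness of $\mathcal{A}_0$. If anything, you are more careful than the paper, which silently drops the cross term $2\sqrt{\rho(\omega)}\,\|\mathcal{A}_0\|_{H^{\frac52}}$ arising from squaring the triangle inequality, whereas you explicitly absorb it.
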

\begin{proof}
By the above Lemma \ref{lemma4.3}, there exist  $ t\geq T_{\mathfrak B} (\omega)+1$ and $s\in (t-1,t)$ such that $s\geq T_{\mathfrak B} (\omega)$. Applying the above
 Lemma \ref{lemma4.4}, we can deduce that for $s\in [t-1,t]$ and $v(0) \in\mathfrak B (\theta_{-t} \omega)$
\begin{align*}
 \|A^{\frac54}v(s,\theta_{-t}\omega,  v(0)) \|^2    =
 \|A^{\frac54}v(s,\theta_{-s-(t-s)}\omega,  v(0)) \|^2
 \leq     \rho(\omega) +\|\mathcal A_0\|_{H^\frac52}^2.
\end{align*}
\end{proof}

\begin{lemma}[Local $(H^{\frac54}, H^{\frac52})$-Lipschitz on $\mathfrak B$]\label{lemma5.1}  Let Assumption \ref{assum} hold and $f\in H$,  there  exist  random variables $T_\omega$ and $ L_4(\mathfrak B, \omega )$ such that    two solutions $v_1$ and $v_2$ of random fractional three-dimensional NS equations \eqref{2.2} corresponding to initial values  $v_{1,0},$ $v_{2,0}$ in $\mathfrak{B}(\theta_{-T_\omega}\omega)$, respectively,   satisfy
\[
 \big \|A^{\frac54}(v_1(T_\omega,\theta_{-T_\omega}\omega,v_{1,0})-v_2(T_\omega,\theta_{-T_\omega}\omega,v_{2,0}) \big) \|{^2}\leq  L_4(\mathfrak B, \omega)\|A^{\frac58}(v_{1,0}-v_{2,0})\|^2, \quad \omega\in \Omega.
\]
\end{lemma}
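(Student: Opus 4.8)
The plan is to repeat the higher-order energy scheme of Lemma~\ref{lem:H2bd}, but applied to the difference $\bar v = v_1-v_2$ of two \emph{random} trajectories, which satisfies the homogeneous equation \eqref{5.1}. First I would test \eqref{5.1} against $A^{\frac52}\bar v$ and integrate by parts. Writing the nonlinearity as $B(v_1+hz(\theta_t\omega))-B(v_2+hz(\theta_t\omega)) = B(\bar v, v_1+hz(\theta_t\omega)) + B(v_2+hz(\theta_t\omega), \bar v)$, both terms linear in $\bar v$, gives
\[
\frac12\frac{\d}{\d t}\|A^{\frac54}\bar v\|^2 + \nu\|A^{\frac{15}{8}}\bar v\|^2 = -\big(B(\bar v, v_1+hz(\theta_t\omega)), A^{\frac52}\bar v\big) - \big(B(v_2+hz(\theta_t\omega), \bar v), A^{\frac52}\bar v\big).
\]
Crucially, since $\bar v$ solves a homogeneous equation, no inhomogeneous term survives, so the resulting estimate will be purely multiplicative in $\|A^{\frac54}\bar v\|^2$ --- this is the one structural simplification relative to Lemma~\ref{lem:H2bd}.

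The main obstacle is the two trilinear terms, which form the highest-order estimate in the whole argument. Shifting a factor $A^{\frac58}$ onto $\bar v$ so that $A^{\frac{15}{8}}\bar v$ appears (as in \eqref{4.16} and its companion in Lemma~\ref{lem:H2bd}) and then combining H\"older's inequality, the Sobolev embeddings, the Gagliardo--Nirenberg inequality and Young's inequality, I expect to absorb $\|A^{\frac{15}{8}}\bar v\|^2$ into the dissipation and arrive at an inequality of the form
\[
\frac{\d}{\d t}\|A^{\frac54}\bar v\|^2 + \nu\|A^{\frac{15}{8}}\bar v\|^2 \leq C\Big(1 + \sum_{i=1}^2\big(\|A^{\frac58}v_i\|^6 + \|A^{\frac54}v_i\|^2\big) + |z(\theta_t\omega)|^6\Big)\|A^{\frac54}\bar v\|^2,
\]
using $\|A^{\frac58}\bar v\|^2\le C\|A^{\frac54}\bar v\|^2$ to fold the lower-order contributions in. The term $B(\bar v, v_1+hz(\theta_t\omega))$ is the one carrying the gradient on $v_1$, so it demands the $H^{\frac52}$-regularity of $v_1$; this is exactly what \eqref{mar9.4} supplies, since $v_{i,0}\in\mathfrak B$ forces $\sup_{s\in[t-1,t]}\|A^{\frac54}v_i\|^2 \le \rho(\omega) + \|\mathcal A_0\|_{H^{\frac52}}^2$. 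Making every power of $A^{\frac{15}{8}}\bar v$ genuinely absorbable through the interpolation is the delicate point.

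With the differential inequality in hand, the remainder is the two-level bootstrap already used above. Applying Gronwall's lemma on $(s,t)$ and integrating $s$ over $(t-1,t)$ replaces the endpoint value by a time average,
\[
\|A^{\frac54}\bar v(t)\|^2 \leq C e^{C\int_{t-1}^t(\cdots)\,\d\tau}\int_{t-1}^t \|A^{\frac54}\bar v(s)\|^2\,\d s.
\]
To turn the $H^{\frac52}$ time-average into the $H^{\frac54}$-norm of the initial difference, I would integrate the lower-order inequality \eqref{mar9.2} in time: Gronwall applied to \eqref{mar9.2} gives $\|A^{\frac58}\bar v(s)\|^2 \le e^{C\int_0^s(\cdots)}\|A^{\frac58}\bar v(0)\|^2$, and then integrating \eqref{mar9.2} over $(0,t)$ bounds $\int_0^t\|A^{\frac54}\bar v(s)\|^2\,\d s$, hence $\int_{t-1}^t\|A^{\frac54}\bar v(s)\|^2\,\d s$, by $e^{C\int_0^t(\cdots)}\|A^{\frac58}\bar v(0)\|^2$.

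Finally I would replace $\omega$ by $\theta_{-T_\omega}\omega$ and take $T_\omega\ge T_{\mathfrak B}(\omega)+1$ large enough that, for $v_{i,0}\in\mathfrak B(\theta_{-T_\omega}\omega)$, the bound \eqref{mar9.4} holds on the last unit interval for both $v_1$ and $v_2$. The coefficient integrals then become tempered random variables assembled from $\rho(\cdot)$, $\zeta_1(\cdot)$, $\zeta_3(\cdot)$ and the moments of $z$ controlled by \eqref{4.29}, \eqref{mar8.3} and the temperedness of $z(\theta_\cdot\omega)$; collecting the resulting exponential factors into a single random variable $L_4(\mathfrak B,\omega)$ yields
\[
\big\|A^{\frac54}\big(v_1(T_\omega,\theta_{-T_\omega}\omega,v_{1,0}) - v_2(T_\omega,\theta_{-T_\omega}\omega,v_{2,0})\big)\big\|^2 \leq L_4(\mathfrak B,\omega)\,\|A^{\frac58}(v_{1,0}-v_{2,0})\|^2,
\]
which is the claim.
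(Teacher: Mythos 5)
Your proposal is correct and follows essentially the same route as the paper's proof: testing the homogeneous difference equation \eqref{5.1} with $A^{\frac52}\bar v$, absorbing the trilinear terms into the dissipation, then the two-level Gronwall/time-averaging bootstrap in which $\int_{t-1}^t\|A^{\frac54}\bar v\|^2\,\d s$ is controlled by integrating \eqref{mar9.2} from $0$, with the coefficients made tempered via \eqref{mar9.4}, \eqref{4.29} and the choice $T_\omega=T_{\mathfrak B}(\omega)+1$. The only deviations are cosmetic: you fold the $\|A^{\frac58}\bar v\|^2$ contributions into $\|A^{\frac54}\bar v\|^2$ by Poincar\'e (the paper keeps them separate and handles them through \eqref{5.12}), and your Gagliardo--Nirenberg treatment of the trilinear terms yields sixth-power coefficients where the paper's H\"older/Sobolev estimates \eqref{5.6}--\eqref{5.7} give squares — neither change affects the argument.
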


\begin{proof}
Taking the inner product of the system  \eqref{5.1} in $H$ by $A^\frac52\bar{v}$ and integration by parts, we can deduce that
\begin{align}\label{5.5}
&\frac{1}{2}\frac{\d}{\d t}\|A^\frac54\bar{v}\|^2+ \nu  \|A^{\frac{15}{8}}\bar{v}\|^2\nonumber\\
&=- \big (B(v_1+hz(\theta_t\omega))-B(v_2+hz(\theta_t\omega)), A^\frac52\bar{v}  \big )\nonumber\\
&=- \big (B(\bar{v},v_1+hz(\theta_t\omega)), A^\frac52\bar{v}  \big )
- \big (B(v_2+hz(\theta_t\omega),\bar{v}), A^\frac52\bar{v} \big ).
\end{align}
For the first term on the right hand side of the system \eqref{5.5}, applying the integration by parts, the Sobolev embedding inequality, the H\"{o}lder inequality and the Young inequality, we can get
\begin{align}\label{5.6}
 \big| \big(B(\bar{v},v_1+hz(\theta_t\omega)), A^\frac52\bar{v}\big)
 \big| &\leq C\|A^\frac58\bar{v}\|_{L^{12}}\|\nabla(v_1+hz(\theta_t\omega))\|_{L^{\frac{12}{5}}}\|A^{\frac{15}{8}}\bar{v}\|  \nonumber\\
&
 +C\|\bar{v}\|_{L^{12}}\|A^\frac98(v_1+hz(\theta_t\omega))\|_{L^{\frac{12}{5}}}\|A^{\frac{15}{8}}\bar{v}\|
 \nonumber\\
&\leq C\|A^\frac54\bar{v}\|\|A^\frac58(v_1+hz(\theta_t\omega))\|\|A^{\frac{15}{8}}\bar{v}\|  \nonumber\\
&
 +C\|A^\frac58\bar{v}\|\|A^\frac54(v_1+hz(\theta_t\omega))\|\|A^{\frac{15}{8}}\bar{v}\|
 \nonumber\\
 &\leq \frac{ \nu}{4}\|A^{\frac{15}{8}}\bar{v}\|^2+C\|A^\frac58(v_1+hz(\theta_t\omega))\|^2\|A^\frac54\bar{v}\|^2\nonumber\\
 &+C\|A^\frac54(v_1+hz(\theta_t\omega))\|^2\|A^\frac58\bar{v}\|^2\nonumber\\
 &\leq \frac{ \nu}{4}\|A^{\frac{15}{8}}\bar{v}\|^2+C(\|A^\frac58v_1\|^2+\|A^\frac58h\|^2|z(\theta_t\omega)|^2)\|A^\frac54\bar{v}\|^2\nonumber\\
 &+C(\|A^\frac54v_1\|^2+\|A^\frac54h\|^2|z(\theta_t\omega)|^2)\|A^\frac58\bar{v}\|^2\nonumber\\
 &\leq \frac{\nu}{4}\|A^{\frac{15}{8}}\bar{v}\|^2+C(\|A^\frac58v_1\|^2+|z(\theta_t\omega)|^2)\|A^\frac54\bar{v}\|^2\nonumber\\
 &+C(\|A^\frac54v_1\|^2+|z(\theta_t\omega)|^2)\|A^\frac58\bar{v}\|^2.
\end{align}
For the second term on the right hand side of the system \eqref{5.5}, by the similar method, it can deduce that
\begin{align}\label{5.7}
\big| \big(B(v_2+hz(\theta_t\omega),\bar{v}),\, A^\frac52\bar{v}\big)\big|
&\leq C\|A^\frac58(v_2+hz(\theta_t\omega))\|_{L^{12}}\|\nabla\bar{v}\|_{L^\frac{12}{5}}\|A^\frac{15}{8}\bar{v}\| \nonumber\\
&
 +C\|v_2+hz(\theta_t\omega)\|_{L^{12}}\|A^\frac98\bar{v}\|_{L^{\frac{12}{5}}}\|A^\frac{15}{8}\bar{v}\|\nonumber\\
&\leq C\|A^\frac54(v_2+hz(\theta_t\omega))\|\|A^\frac58\bar{v}\|\|A^\frac{15}{8}\bar{v}\| \nonumber\\
&
 +C\|A^\frac58(v_2+hz(\theta_t\omega))\|\|A^\frac54\bar{v}\|\|A^\frac{15}{8}\bar{v}\|\nonumber\\
&\leq\frac{\nu}{4}\|A^{\frac{15}{8}}\bar{v}\|^2+C\|A^\frac54(v_2+hz(\theta_t\omega))\|^2\|A^\frac58\bar{v}\|^2\nonumber\\
&
 +C\|A^\frac58(v_2+hz(\theta_t\omega))\|^2\|A^\frac54\bar{v}\|^2\nonumber\\
&\leq\frac{\nu}{4}\|A^{\frac{15}{8}}\bar{v}\|^2+C(\|A^\frac58v_2\|^2+\|A^\frac58h\|^2|z(\theta_t\omega)|^2)
\|A^\frac54\bar{v}\|^2\nonumber\\
&
 +C(\|A^\frac54v_2\|^2+\|A^\frac54h\|^2|z(\theta_t\omega)|^2)\|A^\frac58\bar{v}\|^2\nonumber\\
&\leq\frac{\nu}{4}\|A^{\frac{15}{8}}\bar{v}\|^2+C(\|A^\frac58v_2\|^2+|z(\theta_t\omega)|^2)
\|A^\frac54\bar{v}\|^2\nonumber\\
&
 +C(\|A^\frac54v_2\|^2+|z(\theta_t\omega)|^2)\|A^\frac58\bar{v}\|^2.
\end{align}
Inserting \eqref{5.6} and \eqref{5.7} into \eqref{5.5}, it is easy to deduce that
\begin{align}
\frac{\d}{\d t}\|A^\frac54\bar{v}\|^2+\nu\|A^\frac{15}{8}\bar{v}\|^2
&\leq C\|A^\frac54\bar{v}\|^2 \left ( \sum_{j=1}^2 \|A^\frac58v_j\|^2 +|z(\theta_t\omega)|^2 \right ) \nonumber\\  &+C\|A^\frac58\bar{v}\|^2 \left (\sum_{j=1}^2\|A^\frac54v_j\|^2+|z(\theta_t\omega)|^2 \right), \nonumber
\end{align}
moreover,
\begin{align}
\frac{\d}{\d t}\|A^\frac54\bar{v}\|^2
\leq C\|A^\frac54\bar{v}\|^2 \left ( \sum_{j=1}^2 \|A^\frac58v_j\|^2 +|z(\theta_t\omega)|^2 \right ) +C\|A^\frac58\bar{v}\|^2 \left (\sum_{j=1}^2\|A^\frac54v_j\|^2+|z(\theta_t\omega)|^2 \right). \nonumber
\end{align}
For $s\in(t-1,t)$ and $t\geq 1$, by using Gronwall's lemma,  we can get
\begin{align}
&\|A^\frac54\bar{v}(t)\|^2
-e^{\int_s^tC( \sum_{i=1}^2 \|A^\frac58v_i\|^2 +|z(\theta_\tau\omega)|^2) \d \tau}\|A^\frac54\bar{v}(s)\|^2\nonumber\\
&\leq \int_s^tCe^{\int_\eta^tC( \sum_{i=1}^2 \|A^\frac58v_i\|^2+|z(\theta_\tau\omega)|^2) \d \tau}
\|A^\frac58\bar{v}\|^2 \left(\sum_{i=1}^2\|A^\frac54v_i\|^2+|z(\theta_\eta\omega)|^2\right) \d \eta\nonumber\\
& \leq Ce^{\int_{t-1}^tC( \sum_{i=1}^2 \|A^\frac58v_i\|^2+|z(\theta_\tau\omega)|^2) \d \tau}
\int_{t-1}^t\|A^\frac58\bar{v}\|^2\left( \sum_{i=1}^2\|A^\frac54v_i\|^2+|z(\theta_\eta\omega)|^2 \right) \d \eta. \nonumber
\end{align}
Then we integrate  $s$ on $(t-1,t)$ to deduce
\begin{align}\label{5.9}
&\|A^\frac54\bar{v}(t)\|^2
-\int_{t-1}^te^{\int_s^tC( \sum_{i=1}^2 \|A^\frac58v_i\|^2+|z(\theta_\tau\omega)|^2) \d \tau}\|A^\frac54\bar{v}(s)\|^2 \d s\nonumber\\
& \leq Ce^{\int_{t-1}^tC( \sum_{i=1}^2 \|A^\frac58v_i\|^2+|z(\theta_\tau\omega)|^2) \d \tau}
\int_{t-1}^t\|A^\frac58\bar{v}\|^2\left (\sum_{i=1}^2\|A^\frac54v_i\|^2+|z(\theta_\eta\omega)|^2\right) \d \eta.
\end{align}

By virtue of  the initial values $v_{1,0}$ and $v_{2,0}$   belong to the $H^\frac54$ random  absorbing set  $\mathfrak B$ and   using Gronwall's lemma to \eqref{mar9.2}, it is easy to deduce for $t\geq 1$
\begin{align}
 & \|A^\frac58\bar{v}(t)\|^2 + \nu \int_0^te^{\int_s^tC( \sum_{i=1}^2 \|A^\frac58v_i\|^2+|z(\theta_\tau\omega)|^2) \d \tau}\|A^\frac54\bar{v}(s) \|^2\d s
\nonumber\\
& \leq e^{\int_0^tC( \sum_{i=1}^2 \|A^\frac58v_i\|^2+|z(\theta_\tau\omega)|^2)\d \tau}\|A^\frac58\bar{v}(0)\|^2 . \nonumber
\end{align}
Then we can get the  following  estimates
\begin{align}\label{5.11}
& \nu \int_{t-1}^te^{\int_s^tC( \sum_{i=1}^2 \|A^\frac58v_i\|^2+|z(\theta_\tau\omega)|^2)\d \tau}\|A^\frac54\bar{v}(s)\|^2 \d s\nonumber\\
& \leq e^{\int_{0}^tC( \sum_{i=1}^2 \|A^\frac58v_i\|^2+|z(\theta_\tau \omega)|^2) \d \tau}\|A^\frac58\bar{v}(0)\|^2,
\end{align}
and
\begin{align}\label{5.12}
&
\int_{t-1}^t\|A^\frac58\bar{v}\|^2 \left (\sum_{i=1}^2\|A^\frac54v_i\|^2+|z(\theta_\eta\omega)|^2\right) \d \eta\nonumber\\
&\quad \leq e^{\int_0^tC( \sum_{i=1}^2 \|A^\frac58v_i\|^2+|z(\theta_\tau\omega)|^2) \d \tau}\|A^\frac58\bar{v}(0)\|^2
\int_{t-1}^t   \left (\sum_{i=1}^2\|A^\frac54v_i\|^2+|z(\theta_\eta\omega)|^2\right) \d \eta.
\end{align}
Inserting \eqref{5.11}-\eqref{5.12}  into \eqref{5.9}, we can deduce that for $  t\geq 1$
\begin{align}
 \|A^\frac54\bar{v}(t)\|^2
&\leq Ce^{\int_0^tC( \sum_{i=1}^2 \|A^\frac58v_i\|^2+|z(\theta_\tau\omega)|^2) \d \tau}\|A^\frac58\bar{v}(0)\|^2
\int_{t-1}^t  \left (1+\sum_{i=1}^2\|A^\frac54v_i\|^2+|z(\theta_\eta\omega)|^2\right)  \d \eta.   \nonumber
\end{align}
We replace $\omega$ by $\theta_{-t}\omega$ to deduce   for any $  t\geq 1$
\begin{align} \label{mar11.1}
 \|A^\frac54\bar{v}(t,\theta_{-t}\omega,\bar{v}(0))\|^2
&\leq Ce^{\int_0^tC( \sum_{i=1}^2\|A^\frac58v_i(\tau,\theta_{-t}\omega,v_i(0))\|^2
+|z(\theta_{\tau-t}\omega)|^2)\d \tau}\|A^\frac58\bar{v}(0)\|^2\nonumber\\
&  \times \int_{t-1}^t  \! \left ( \sum_{i=1}^2\|A^\frac54v_i(\eta,\theta_{-t}\omega,v_{i,0})\|^2+|z(\theta_{\eta-t}\omega)|^2+1\right) \d \eta.
\end{align}

By \eqref{mar9.4}, it yields for $i=1,2$
  \[
\sup_{s\in [t-1,t]}
 \|A^\frac54v_i(s,\theta_{-t}\omega,  v_i(0)) \|^2
  \leq     \rho(\omega) +\|\mathcal A_0\|_{H^\frac52}^2,
 \quad  t\geq T_{\mathfrak B} (\omega)+1.
\]
Let
\[
T_\omega :=T_{\mathfrak B} (\omega)+1, \quad \omega\in \Omega,
\]
it can deduce that
\begin{align}
 & \int_{T_\omega -1}^{T_\omega }  \left ( \sum_{i=1}^2\|A^\frac54v_i(\eta,\theta_{-T_\omega }\omega,v_{i,0})\|^2+|z(\theta_{\eta-T_\omega }\omega)|^2\right)  \d \eta  \nonumber\\
 &
 \leq  2\left( \rho(\omega) +\|\mathcal A_0\|_{H^\frac{5}{2}}^2 \right) + \int_{T_\omega -1}^{T_\omega } |z(\theta_{\eta-T_\omega }\omega)|^2 \d \eta\nonumber\\
 &
 =  2\left( \rho(\omega) +\|\mathcal A_0\|_{H^\frac{5}{2}}^2 \right) + \int_{-1}^0 |z(\theta_{\eta}\omega)|^2 \d \eta =: \zeta_7(\omega)
  . \label{mar11.2}
 \end{align}
By \eqref{4.29} and Lemma \ref{lem:H1bound}, it yields for $t\geq T_{B}(\omega)\geq T_{1}(\omega)$
\begin{align}
 & \int_{0}^{t }  \left ( \sum_{i=1}^2\|A^\frac58v_i(\eta,\theta_{-t }\omega,v_{i,0})\|^2+|z(\theta_{\eta-t }\omega)|^2\right)  \d \eta  \nonumber\\
 &\leq  e^{(\frac{8}{\alpha}-3)\lambda t}\int_{0}^{t } e^{(\frac{8}{\alpha}-3)\lambda(\eta- t)}  \sum_{i=1}^2\|A^\frac58v_i(\eta,\theta_{-t }\omega,v_{i,0})\|^2\d \eta+\int_{-t}^{0 }|z(\theta_{\eta }\omega)|^2 \d \eta  \nonumber\\
 &\leq \frac{2}{\alpha\nu}e^{(\frac{8}{\alpha}-3)\lambda t}\left(e^{-\lambda t}\sum_{i=1}^2\|v_{i,0}\|^2+2\zeta_1(\omega)\right)+e^{C\int_{-t}^{0 }|z(\theta_{\eta }\omega)|^2 \d \eta}\nonumber\\
 &\leq Ce^{Ct+C\int_{-t}^{0 }|z(\theta_{\eta }\omega)|^2 \d \eta}\left(1+e^{-\lambda t}\sum_{i=1}^2\|v_{i,0}\|^2+2\zeta_1(\omega)\right)\nonumber\\
 &\leq Ce^{Ct+C\int_{-t}^{0 }|z(\theta_{\eta }\omega)|^2 \d \eta}\left (1+\zeta_2(\theta_{-t }\omega)\right).
 \end{align}
Let $t=T_\omega$, we have
 \begin{align}  \label{mar11.3}
 & \int_0^{T_\omega}
  \left ( \sum_{i=1}^2\|A^\frac58v_i(\eta,\theta_{-T_\omega }\omega,v_{i,0})\|^2+|z(\theta_{\eta-T_\omega }\omega)|^2\right)  \d \eta  \nonumber \\
 &     \leq  C\big( \zeta_2(\theta_{-{T_\omega}}\omega)  +1 \big)  e^{ C {T_\omega}+C \int^0_{-{T_\omega}}  |z(\theta_{\tau} \omega)|^2 \d \tau} =:\zeta_8(\omega).
\end{align}

Inserting     \eqref{mar11.2}  and \eqref{mar11.3}  into \eqref{mar11.1},  we deduce at $t=T_\omega$
\begin{align}
 \|A^\frac54\bar{v}(T_\omega ,\theta_{-T_\omega }\omega,\bar{v}(0))\|^2
&\leq Ce^{ C \zeta_8(\omega) } \zeta_7(\omega) \|A^\frac58\bar{v}(0)\|^2. \nonumber
\end{align}
Let
\[
 L_4 (\mathfrak B, \omega) :=   Ce^{ C \zeta_8(\omega) } \zeta_7(\omega) ,\quad \omega\in \Omega,
\]
this completes the proof of Lemma \ref{lemma5.1}.
\end{proof}

Finally, we introduce the following main result of this section.
\begin{theorem}[Local $(H, H^\frac52)$-Lipschtiz] \label{theorem5.6}
 Let Assumption \ref{assum} hold and $f\in H$. For   any tempered set $\mathfrak D \in \D_H$,  then there
  exist  random variables $T_{{\mathfrak D}} (\cdot)  $   and $ L_{\mathfrak D}(\cdot )$ such that   two solutions $v_1$ and $v_2$ of random fractional three-dimensional NS equations \eqref{2.2} corresponding to initial values   $v_{1,0},$ $ v_{2,0}$ in $\mathfrak D \left (\theta_{-T_{\mathfrak D}(\omega)}\omega \right)$, respectively, satisfy
  \begin{align}\label{sep5.1}
  &
  \big\|A^\frac54(v_1 \!  \big (T_{\mathfrak D}(\omega),\theta_{-T_{\mathfrak D}(\omega)}\omega,v_{1,0}\big)
  -v_2 \big (T_{\mathfrak D}(\omega),\theta_{-T_{\mathfrak D}(\omega)}\omega,v_{2,0} \big) \big) \|^2 \nonumber\\
&\quad \leq  L_{\mathfrak D} (\omega)\|v_{1,0}-v_{2,0}\|^2,\quad \omega\in \Omega.
\end{align}
\end{theorem}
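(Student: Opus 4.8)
The plan is to obtain the $(H,H^{\frac52})$-Lipschitz estimate by \emph{composing} the two-stage estimates already established, namely the local $(H,H^{\frac54})$-Lipschitz continuity from any tempered set $\mathfrak D$ (Lemma \ref{lem:H1}) and the local $(H^{\frac54},H^{\frac52})$-Lipschitz continuity on the absorbing set $\mathfrak B$ (Lemma \ref{lemma5.1}). The structural fact that makes the composition possible is the final assertion of Lemma \ref{lem:H1}: after the first stage the trajectories land inside $\mathfrak B(\omega)$, so the $\mathfrak B$-restricted estimate of Lemma \ref{lemma5.1} becomes applicable to their endpoints. Recall that $A^{\frac58}$ and $A^{\frac54}$ measure the $H^{\frac54}$ and $H^{\frac52}$ norms, respectively, so Lemma \ref{lem:H1} controls the $H^{\frac54}$-difference by the $H$-difference of the data, while Lemma \ref{lemma5.1} upgrades an $H^{\frac54}$-difference to an $H^{\frac52}$-difference; chaining the two yields exactly \eqref{sep5.1}.

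First I would fix $\omega\in\Omega$ and set $\omega':=\theta_{-T_\omega}\omega$, where $T_\omega$ is the random time supplied by Lemma \ref{lemma5.1}. Applying Lemma \ref{lem:H1} at the sample point $\omega'$ produces a random time $T_1(\mathfrak D,\omega')$ and a constant $L_3(\mathfrak D,\omega')$; I then define the total time
\[
 T_{\mathfrak D}(\omega):=T_\omega+T_1(\mathfrak D,\theta_{-T_\omega}\omega),
\]
so that $\theta_{-T_{\mathfrak D}(\omega)}\omega=\theta_{-T_1(\mathfrak D,\omega')}\omega'$ and the data $v_{1,0},v_{2,0}\in\mathfrak D(\theta_{-T_{\mathfrak D}(\omega)}\omega)$ match the hypotheses of Lemma \ref{lem:H1} at $\omega'$. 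Using the cocycle property with the split $T_{\mathfrak D}(\omega)=T_\omega+T_1(\mathfrak D,\omega')$, each solution factors as
\[
 v_i\big(T_{\mathfrak D}(\omega),\theta_{-T_{\mathfrak D}(\omega)}\omega,v_{i,0}\big)
 =v_i\big(T_\omega,\omega',y_i\big),\qquad
 y_i:=v_i\big(T_1(\mathfrak D,\omega'),\theta_{-T_1(\mathfrak D,\omega')}\omega',v_{i,0}\big),
\]
and the closing clause of Lemma \ref{lem:H1} guarantees $y_1,y_2\in\mathfrak B(\omega')$.

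Then I would concatenate the two inequalities. Lemma \ref{lemma5.1} applied at $\omega$ to the endpoints $y_1,y_2\in\mathfrak B(\theta_{-T_\omega}\omega)$ gives
\[
 \big\|A^{\frac54}\big(v_1(T_\omega,\omega',y_1)-v_2(T_\omega,\omega',y_2)\big)\big\|^2
 \leq L_4(\mathfrak B,\omega)\,\|A^{\frac58}(y_1-y_2)\|^2,
\]
while Lemma \ref{lem:H1} at $\omega'$ gives $\|A^{\frac58}(y_1-y_2)\|^2\leq L_3(\mathfrak D,\omega')\|v_{1,0}-v_{2,0}\|^2$. Substituting the second bound into the first yields \eqref{sep5.1} with
\[
 L_{\mathfrak D}(\omega):=L_4(\mathfrak B,\omega)\,L_3(\mathfrak D,\theta_{-T_\omega}\omega).
\]

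No new PDE estimate is needed; the entire argument is the same concatenation already used to pass from Lemma \ref{lemma51} to Lemma \ref{lem:H1}, now carried one regularity level higher. The only delicate point—and the step I expect to require the most care—is the bookkeeping of the sample-point shifts: one must verify that $\theta_{s}$ acting in the cocycle property sends $\theta_{-T_{\mathfrak D}(\omega)}\omega$ exactly to $\omega'$ and then $\theta_{-T_\omega}$ back to $\omega$, and that the random times $T_1,T_\omega$ and the constants $L_3,L_4$ are evaluated at the correct shifted points, so that both lemmas apply with matching hypotheses and the intermediate states $y_i$ genuinely lie in $\mathfrak B(\omega')$.
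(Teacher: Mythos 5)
Your proposal is correct and follows essentially the same route as the paper: the paper likewise defines $T_{\mathfrak D}(\omega)=T_\omega+T_1(\theta_{-T_\omega}\omega)$ and $L_{\mathfrak D}(\omega)=L_4(\mathfrak B,\omega)\,L_3(\mathfrak D,\theta_{-T_\omega}\omega)$, splits the trajectory by the cocycle property at the intermediate time, uses the final clause of Lemma \ref{lem:H1} to place the intermediate states in $\mathfrak B(\theta_{-T_\omega}\omega)$, and then chains Lemma \ref{lemma5.1} with \eqref{mar19.6}. Your explicit bookkeeping of the shifted sample points $\omega'=\theta_{-T_\omega}\omega$ matches the paper's computation exactly.
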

\begin{proof}
By the above Lemma \ref{lem:H1}, for any tempered set   $\mathfrak D\in \D_H$, then there exist random variables $ T_1(\mathfrak D ,\cdot )  $ and $L_3(\mathfrak D,\cdot) $ such that
\be   \label{mar19.6}
  \left \| A^\frac58\bar v\left (T_1 ,\theta_{-T_1}\omega,  \bar v(0)\right)
  \right \|^2
 \leq L_3({\mathfrak D}, \omega) \|\bar {v} (0)\|^2  ,
\ee
where $T_1= T_1(\mathfrak D, \omega)$. Moreover, $ v\left (T_1 ,\theta_{-T_1}\omega,  v_{0}\right) \in \mathfrak B(\omega)$ for any $v_0\in \mathfrak D(\theta_{-T_1}\omega) $, $\omega\in \Omega$.
 By the above Lemma \ref{lemma5.1} and \eqref{mar19.6}, it yields
\ben
  \left \| A^\frac54\bar v \big( T_\omega +T_1, \theta_{-T_\omega-T_1}
 \omega, \bar v(0) \big)
   \right \|^2
 &  =  \left \| A^\frac54\bar v \big( T_\omega  , \theta_{-T_\omega }
 \omega, \bar v  ( T_1, \theta_{-T_\omega-T_1}
 \omega, \bar v(0)  )\big)
   \right \|^2  \\
   &\leq  L_4(\mathfrak B, \omega) \left \| A^\frac58 \bar v \big  ( T_1, \theta_{-T_\omega-T_1}
 \omega, \bar v(0)  \big)
   \right \|^2  \  \text{(by Lemma \ref{lemma5.1})}\\
   &\leq L_4(\mathfrak B,\omega)  L_3 \big (\mathfrak D, \theta_{-T_\omega}\omega \big) \|\bar v(0)\|^2  \  \text{(by \eqref{mar19.6})} ,
\ee
where $T_1=T_1(\theta_{-T_\omega}  \omega)$,
uniformly for $v_{1,0},$ $ v_{2,0}\in \B(\theta_{-T_\omega-T_1(\theta_{-T_\omega} \omega)}\omega)$, $  \omega\in \Omega$. Let the random variables defined by
\ben
  & T_{\mathfrak D}( \omega) := T_\omega +T_1(\theta_{-T_\omega} \omega) , \\
  &L_{\mathfrak D}(\omega) :=  L_4(\mathfrak B,\omega)   L_3 \big (\mathfrak D, \theta_{-T_\omega}\omega \big),
\ee
this completes the proof of Lemma \ref{theorem5.6}.
  \end{proof}

\section{The $(H,H^\frac{5}{2})$-random attractor}\label{sec6}

In this subsection, we will prove a $(H,H^\frac{5}{2})$-random attractor of the random fractional three-dimensional NS  equations \eqref{2.2}. Moreover, the  random attractor  $\mathcal A$ of the random fractional three-dimensional NS  equations \eqref{2.2} is also a finite-dimensional  $(H,H^\frac{5}{2})$-random attractor. As in \ref{theorem4.6}, we introduce the following main result.

\begin{theorem} \label{theorem5.1}
 Let Assumption \ref{assum} hold and $f\in H$. The  RDS $\phi$ generated by the random fractional three-dimensional NS equations  \eqref{2.2} has  a tempered $(H,H^\frac52)$-random attractor $\mathcal A $. Moreover, $\mathcal A$ has a finite fractal dimension in $H^\frac52$: there exists    $ d>0$ such that
\[
d_f^{H^\frac52} \! \big ( \A(\omega) \big)  \leq d, \quad \omega \in \Omega.
\]
\end{theorem}
\begin{proof}
  By the above Theorem \ref{theorem4.1},  we can get an $H^\frac{5}{2}$ random absorbing set $\mathfrak B_{H^\frac{5}{2}}$, which is tempered and closed in $H^\frac{5}{2}$.  Then the  $(H,H^\frac{5}{2})$-smoothing property \eqref{sep5.1}   implies the $(H,H^\frac{5}{2})$-asymptotic compactness of $\phi$. Thence  by the above Lemma \ref{lem:cui18}, we show that $\A$ is an $(H,H^\frac{5}{2})$-random attractor of $\phi$.
 By the above Theorem \ref{theorem4.6}, we have  the fractal dimension in $H$ of  $\A$ is finite. By the Lemma 5 in \cite{cui} and the $(H,H^\frac{5}{2})$-smoothing property \eqref{sep5.1}, we prove the   finite-dimensionality  in $H^\frac{5}{2}$. This completes the proof of Theorem \ref{theorem5.1}.
\end{proof}

 \begin{remark}
 The local $(H,H^\frac{5}{2})$-Lipschitz continuity \eqref{sep5.1}  and the finite fractal dimension in $H^\frac{5}{2}$ of the global attractor    are new even for the deterministic fractional three-dimensional NS equations.
 \end{remark}

\section{The $(H,H^k)$-random attractor}\label{sec7}

In this section, let $f\in H^{k-\frac{5}{4}}$ and $k\geq\frac52$, we will prove  an  $H^\frac52$ random absorbing set and  an  $H^k$ random absorbing set of system \eqref{2.2}. Then we will prove a local $(H, H^k)$-Lipschitz continuity in initial values of the solutions of the random fractional three-dimensional NS  equations \eqref{2.2}. Finally, the $(H,H^k)$-random attractor  $\mathcal A$ of the random fractional three-dimensional NS  equations \eqref{2.2} is also a finite-dimensional  $(H,H^k)$-random attractor.

\subsection{Estimate in $H^{\frac{5}{2}}$}
In this subsection, let $f\in H^{\frac{5}{4}}$ and $h\in H^{\frac{15}{4}}$,   an  $H^\frac52$ random absorbing set  of system \eqref{2.2} are proved. Let $k>\frac{5}{2}$, in order to prove $H^k$ random absorbing set  of system \eqref{2.2}, we use the iterative methods and need satisfy the following Lemma \ref{lemma7.1}.
\begin{lemma}[$H^{\frac52}$ bound] \label{lemma7.1}
Assume that  $f\in H^\frac54$ and Assumption \ref{assum1} hold.
For any bounded set $ B_1$ in $ H$,  there exists a random variable $ T_{B_1}(\omega)>T_B(\omega)$ such that,  for any $t\geq T_{B_1}(\omega)$,
\[
\sup_{v(0)\in B_1} \left( \big\|A^{\frac54}v(t,\theta_{-t}\omega,v(0))  \big\|^2 + \int_{t-\frac 14 }^t \|A^{\frac{15}{8}}v(s,\theta_{-t}\omega,v(0))\|^2   \d s\right)
\leq  \zeta_9(\omega),
\]
here $\zeta_9(\omega)$  is defined by \eqref{7.4} such that $\zeta_9(\omega) > \zeta_2(\omega)> 1$, $ \omega\in\Omega$.
\end{lemma}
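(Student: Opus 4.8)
The plan is to obtain the $H^{\frac52}$ bound by a direct energy estimate at the level of $A^{\frac54}v$, which is now legitimate because the extra regularity $f\in H^{\frac54}$ lets us pair the forcing with $A^{\frac52}v$ — precisely the obstruction flagged in Remark \ref{rmk1} that forced the indirect comparison argument of Section \ref{sec4} when only $f\in H$ was available. First I would take the inner product of \eqref{2.2} with $A^{\frac52}v$ and integrate by parts, producing
\begin{align*}
\frac12\frac{\d}{\d t}\|A^{\frac54}v\|^2+\nu\|A^{\frac{15}{8}}v\|^2
&=-\big(B(v+hz(\theta_t\omega)),A^{\frac52}v\big)\\
&\quad+\big(f-\nu A^{\frac54}hz(\theta_t\omega)+hz(\theta_t\omega),A^{\frac52}v\big),
\end{align*}
using the identity $(A^{\frac54}v,A^{\frac52}v)=\|A^{\frac{15}{8}}v\|^2$ for the dissipative term.

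For the forcing and noise contributions I would write $(f,A^{\frac52}v)=(A^{\frac58}f,A^{\frac{15}{8}}v)$ and move half a power of $A$ onto each noise term as $(\nu A^{\frac54}hz,A^{\frac52}v)=\nu z(A^{\frac{15}{8}}h,A^{\frac{15}{8}}v)$ and $(hz,A^{\frac52}v)=z(A^{\frac58}h,A^{\frac{15}{8}}v)$; since $f\in H^{\frac54}=D(A^{\frac58})$ and $h\in H^{\frac{15}{4}}=D(A^{\frac{15}{8}})$ under Assumption \ref{assum1}, Young's inequality bounds all of these by $\tfrac{\nu}{8}\|A^{\frac{15}{8}}v\|^2+C(1+|z(\theta_t\omega)|^2)$. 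The nonlinear term is the delicate one: splitting $B(v+hz)=B(v+hz,v+hz)$, integrating by parts and applying the H\"older, Sobolev embedding, Gagliardo--Nirenberg and Young inequalities exactly as in the estimates \eqref{4.18}--\eqref{4.16} of Lemma \ref{lem:H2bd}, but with the full solution $v$ in place of the difference $w$, I expect to absorb the top-order contribution into $\tfrac\nu4\|A^{\frac{15}{8}}v\|^2$ and arrive at a differential inequality of the form
\begin{align*}
\frac{\d}{\d t}\|A^{\frac54}v\|^2+\nu\|A^{\frac{15}{8}}v\|^2
\leq C\big(1+\|A^{\frac58}v\|^6+|z(\theta_t\omega)|^6\big)\|A^{\frac54}v\|^2
+C\big(1+\|A^{\frac58}v\|^8+|z(\theta_t\omega)|^8\big).
\end{align*}

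With this inequality in hand, the remaining work is the uniform pullback Gronwall argument already used for Lemma \ref{lem:H1bound}: integrate over $(\eta,t)$ and then average $\eta$ over a short interval such as $(t-\tfrac12,t-\tfrac14)$, so that both the exponential coefficient $\int(1+\|A^{\frac58}v\|^6+|z|^6)$ and the interior integral $\int\|A^{\frac54}v\|^2$ are controlled. Here the two outputs of Lemma \ref{lem:H1bound} feed in directly: its pointwise bound $\|A^{\frac58}v(t,\theta_{-t}\omega,v(0))\|^2\leq\zeta_2(\omega)$ controls the coefficient, while its integrated dissipation $\int_{t-1/2}^t\|A^{\frac54}v\|^2\,\d s\leq\zeta_2(\omega)$ supplies the interior integral. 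Replacing $\omega$ by $\theta_{-t}\omega$ and collecting the resulting tempered terms into a single tempered random variable $\zeta_9(\omega)>\zeta_2(\omega)$ then yields both the pointwise $H^{\frac52}$ bound and the integrated $\int_{t-1/4}^t\|A^{\frac{15}{8}}v\|^2\,\d s$ bound, valid for all $t\geq T_{B_1}(\omega)$, where $T_{B_1}(\omega)>T_B(\omega)$ is chosen large enough that the $H^{\frac54}$ estimate of Lemma \ref{lem:H1bound} is already in force.

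The main obstacle is the nonlinear estimate at this higher regularity level: in three dimensions one must verify that the dissipation $\nu\|A^{\frac{15}{8}}v\|^2$ genuinely dominates the cubic interaction $(B(v+hz),A^{\frac52}v)$, which is exactly where the critical fractional exponent $\alpha=\tfrac54$ and the Gagliardo--Nirenberg interpolation are essential. Since the analogous computation has already been carried out for the difference $w$ in Lemma \ref{lem:H2bd}, I expect this step to be technical but not conceptually new; the only genuinely new ingredient relative to Section \ref{sec4} is that the stronger hypothesis $f\in H^{\frac54}$ removes the need for the comparison approach and lets the estimate be closed directly, which is what makes this lemma the base case for the iterative $H^k$ bounds to follow.
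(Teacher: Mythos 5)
Your proposal follows the same skeleton as the paper's proof: multiply \eqref{2.2} by $A^{\frac52}v$, pair the forcing as $(A^{\frac58}f,A^{\frac{15}{8}}v)$ (the step enabled by $f\in H^{\frac54}$, exactly as you flag against Remark \ref{rmk1}), derive a differential inequality for $\|A^{\frac54}v\|^2$, and run the integrate-then-average Gronwall argument over $(t-\frac12,t-\frac14)$ feeding in Lemmas \ref{lemma4.1} and \ref{lem:H1bound}; your treatment of the forcing and noise terms coincides with \eqref{7.3}, and your choice $T_{B_1}>T_B$ matches the paper's.

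The one place you diverge is the trilinear term, and the difference has a consequence. The paper does \emph{not} import the $L^\infty$/Gagliardo--Nirenberg estimates of Lemma \ref{lem:H2bd}; it integrates by parts once and uses only H\"older ($L^{2}\times L^{12}\times L^{12/5}$) plus Sobolev embedding, obtaining in \eqref{7.2} the bound $\frac{\nu}{4}\|A^{\frac{15}{8}}v\|^2+C\big(\|A^{\frac58}v\|^2+|z(\theta_t\omega)|^2\big)\|A^{\frac54}v\|^2+C|z(\theta_t\omega)|^4$, i.e.\ a coefficient \emph{quadratic} in $\|A^{\frac58}v\|$ rather than your sixth/eighth powers. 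This is not cosmetic: with a quadratic coefficient the Gronwall exponent is $\int_{t-\frac12}^t\|A^{\frac58}v\|^2\,\d\tau$, which is precisely the dissipation integral supplied by Lemma \ref{lemma4.1} (through $\zeta_1$), so no pointwise-in-time information is needed at all. Your sixth-power coefficient instead requires $\sup_{s\in[t-\frac12,t]}\|A^{\frac58}v(s,\theta_{-t}\omega,v(0))\|$, and Lemma \ref{lem:H1bound} as stated bounds the solution only at the terminal time $t$ relative to the fiber $\theta_{-t}\omega$; to invoke it at interior times $s<t$ you must write $v(s,\theta_{-t}\omega,v_0)=v\big(s,\theta_{-s}(\theta_{s-t}\omega),v_0\big)$ and take a supremum of $\zeta_2(\theta_{-\varepsilon}\omega)$ over the shift $\varepsilon\in[0,\frac12]$ --- the device the paper deploys in Lemmas \ref{lemma4.3} and \ref{lemma4.4}. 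With that repair your argument closes and produces a (larger) tempered $\zeta_9$, so the proposal is correct; the paper's sharper estimate of the nonlinearity simply avoids this wrinkle and keeps the final random variable \eqref{7.4} expressed through $\zeta_1$ and $\zeta_2$ only.
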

\begin{proof}
Taking the inner product of the system \eqref{2.2} in $H$ by $A^{\frac52}v$ and  integration by parts, we have
\begin{align}\label{7.1}
\frac12\frac{\d}{\d t}\|A^{\frac54}v\|^2+ \nu \|A^{\frac{15}{8}}v\|^2&=-\big(B(v+hz(\theta_t\omega),v+hz(\theta_t\omega)), A^{\frac52}v \big)\nonumber\\
& +\big(f-\nu A^{\frac54}hz(\theta_t\omega) +hz(\theta_t\omega),A^{\frac52}v \big).
\end{align}
For the first term on the right hand side of \eqref{7.1}, by integration by parts and
 applying the H\"{o}lder inequality,  the Sobolev embedding inequality  and the Young inequality, we can deduce that
\begin{align}\label{7.2}
&\big| \big(B(v+hz(\theta_t\omega),v+hz(\theta_t\omega)), A^{\frac{5}{2}}v \big)\big|
 \nonumber\\
&  \leq C\|A^{\frac{15}{8}}v\|\|A^{\frac{5}{8}}(v+hz(\theta_t\omega))\|_{L^{12}}
\|A^{\frac12}(v+hz(\theta_t\omega))\|_{L^{\frac{12}{5}}}\nonumber\\
&+C\|A^{\frac{15}{8}}v\|\|v+hz(\theta_t\omega)\|_{L^{12}}
\|A^{\frac98}(v+hz(\theta_t\omega))\|_{L^{\frac{12}{5}}}\nonumber\\
&  \leq C\|A^{\frac{15}{8}}v\|\|A^{\frac{5}{4}}(v+hz(\theta_t\omega))\|\|A^{\frac{5}{8}}(v+hz(\theta_t\omega))\|\nonumber\\
& \leq \frac{\nu}{4}\|A^{\frac{15}{8}}v\|^2+C\|A^{\frac{5}{8}}(v+hz(\theta_t\omega))\|^2\|A^{\frac{5}{4}}(v+hz(\theta_t\omega))\|^2
\nonumber\\
&  \leq \frac{\nu}{4}\|A^{\frac{15}{8}}v\|^2+C(\|A^{\frac{5}{8}}v\|^2+\|A^{\frac{5}{8}}h\|^2|z(\theta_t\omega)|^2)
(\|A^{\frac{5}{4}}v\|^2+\|A^{\frac{5}{4}}h\|^2|z(\theta_t\omega)|^2)\nonumber\\
&  \leq \frac{\nu}{4}\|A^{\frac{15}{8}}v\|^2+C(\|A^{\frac{5}{8}}v\|^2+|z(\theta_t\omega)|^2)
\|A^{\frac{5}{4}}v\|^2+C|z(\theta_t\omega)|^4.
\end{align}
For the second term on the right hand side of \eqref{7.1}, by integration by parts and
 applying the similar method, we also deduce that
\begin{align}
&\big (f-\nu A^{\frac{5}{4}}hz(\theta_t\omega) +hz(\theta_t\omega),A^{\frac{5}{2}}v \big)\nonumber\\
& \leq \|A^{\frac{5}{8}}f\| \|A^{\frac{15}{8}}v\|+  \nu\|A^{\frac{15}{8}}h\||z(\theta_t\omega)|\|A^{\frac{15}{8}}v\|+ C  \|A^{\frac{5}{8}}h\||z(\theta_t\omega)|\|A^{\frac{15}{8}}v\|\nonumber\\
& \leq \frac  \nu 4 \|A^{\frac{15}{8}}v\|^2+C\|A^{\frac{5}{8}}f\|^2+ C  \|A^{\frac{15}{8}}h\|^2|z(\theta_t\omega)|^2 + C  \|A^{\frac{5}{8}}h\|^2|z(\theta_t\omega)|^2\nonumber\\
&\leq  \frac  \nu 4 \|A^{\frac{15}{8}}v\|^2+C(\|A^{\frac{5}{8}}f\|^2+ \|A^{\frac{15}{8}}h\|^2 |z(\theta_t\omega)|^2 )\nonumber\\
&\leq  \frac  \nu 4 \|A^{\frac{15}{8}}v\|^2+C(\|A^{\frac{5}{8}}f\|^2+ |z(\theta_t\omega)|^2).\label{7.3}
\end{align}
Inserting \eqref{7.2} and \eqref{7.3} into \eqref{7.1}, it can get that
\begin{align}
\frac{\d}{\d t}\|A^{\frac54}v\|^2 +  \nu  \|A^{\frac{15}{8}}v\|^2
&\leq  C\left ( \|A^{\frac58}v\|^2+ |z(\theta_t\omega)|^2 \right)\|A^{\frac54}v\|^2  +
C \left(1 +|z(\theta_t\omega)|^4+\|A^{\frac{5}{8}}f\|^2 \right)\nonumber \\
&\leq  C\left ( \|A^{\frac58}v\|^2+ |z(\theta_t\omega)|^2 \right)\|A^{\frac54}v\|^2  +
C \left(1 +|z(\theta_t\omega)|^4 \right).
\end{align}
For $\eta\in  (t-\frac{1}{2},t-\frac{1}{4})$, applying Gronwall's lemma on $(\eta, t)$  for $t >1$, it can deduce that
\begin{align*}
&\|A^{\frac54}v(t)\|^2
+ \nu \int_\eta ^te^{ C\int_s^t(\|A^{\frac{5}{8}}v\|^2+|z(\theta_\tau \omega)|^2) \d \tau}\|A^{\frac{15}{8}} v(s)\|^2  \d s \nonumber\\
&\leq e^{ C\int_\eta^t(\|A^{\frac{5}{8}}v\|^2+|z(\theta_\tau\omega)|^2) \d \tau}\|A^{\frac54}v( \eta)\|^2 +C\int_\eta^te^{C \int_s^t (\|A^{\frac{5}{8}}v\|^2+ |z (\theta_\tau\omega)|^2) \d \tau} \left (1 +|z(\theta_s\omega)|^4 \right)\d s,
\end{align*}
and  integrating over $\eta\in (t-\frac{1}{2},t-\frac 14) $ yields
\begin{align*}
& \frac14  \|A^{\frac54}v(t )\|^2
+\frac  \nu 4\int_{t-\frac 14 }^t e^{ C\int_s^t(\|A^{\frac{5}{8}}v\|^2+|z(\theta_\tau\omega)|^2) \d \tau}\|A^{\frac{15}{8}}v(s)\|^2 \d s \nonumber\\
& \leq \int_{t-\frac 12}^{t-\frac 14}  e^{ C\int_\eta^t(\|A^{\frac{5}{8}}v\|^2+|z(\theta_\tau\omega)|^2) \d \tau} \|A^{\frac54}v( \eta)\|^2  \d \eta
 \nonumber\\
& +C\int_{t-\frac{1}{2}}^te^{ C\int_s^t (\|A^{\frac{5}{8}}v\|^2+ |z (\theta_\tau\omega)|^2) \d \tau} \left (1 +|z(\theta_s\omega)|^4 \right) \d s \nonumber\\
&\leq Ce^{   C \int_{ t-\frac{1}{2}}^t (\|A^{\frac{5}{8}}v\|^2+|z(\theta_\tau\omega)|^2)  \d \tau}  \left[  \int_{t-\frac{1}{2}}^{t  }  \|A^{\frac54}v( \eta)\|^2  \d \eta  + \int_{t-\frac{1}{2}}^t \left (1 +|z(\theta_s\omega)|^4 \right)\d s\right]  .
\end{align*}
Replacing $\omega$ with $\theta_{-t}\omega$, then we can get
\begin{align*}
&   \|A^{\frac54}v(t,\theta_{-t}\omega, v(0) )\|^2\nonumber\\
& +   \nu \int_{t-\frac 14 }^te^{  C\int_{s}^t\|A^{\frac{5}{8}}v(\tau ,\theta_{-t}\omega, v(0))\|^2\d \tau+C\int_{s-t}^0|z(\theta_\tau\omega)|^2 \d \tau}\|A^{\frac{15}{8}} v(s ,\theta_{-t}\omega, v(0))\|^2 \d s\nonumber\\
&\leq  C e^{ C \int_{t-\frac{1}{2}}^t(\|A^{\frac{5}{8}}v(\tau ,\theta_{-t}\omega, v(0))\|^2+ |z (\theta_{\tau-t}\omega)|^2) \d \tau}\nonumber\\
&\cdot\left [ \int_{t-\frac{1}{2}}^t   \|A^{\frac54}v(s,\theta_{-t}\omega, v(0) )\|^2\d s+\int_{t-\frac{1}{2}}^t \left(1+ |z(\theta_{s-t} \omega)|^4  \right)  \d s\right]\nonumber\\
& =  C e^{ C \int_{t-\frac{1}{2}}^t\|A^{\frac{5}{8}}v(\tau ,\theta_{-t}\omega, v(0))\|^2\d \tau+C \int_{-\frac{1}{2}}^0 |z (\theta_{\tau}\omega)|^2 \d \tau}\nonumber\\
&\cdot\left [ \int_{t-\frac{1}{2}}^t   \|A^{\frac54}v(s,\theta_{-t}\omega, v(0) )\|^2\d s+\int_{-\frac{1}{2}}^0 \left(1+ |z(\theta_{s} \omega)|^4  \right)  \d s\right].
\end{align*}
By  virtue of the Lemma   \ref{lemma4.1}, it can deduce that
\begin{align*}
 \nu  \int_{t-\frac{1}{2}}^t  \|A^{\frac58}v(s,\theta_{-t}\omega, v(0) )\|^2  \d s
 & \leq   \nu e^{\left( \frac 8\alpha -3 \right)  \lambda }   \int_{t-\frac{1}{2}}^t  e^{\left( \frac 8\alpha -3 \right)  \lambda (s-t) } \|A^{\frac58}v(s,\theta_{-t}\omega, v(0) )\|^2  \d s
 \\
 & \leq   \nu e^{\left( \frac 8\alpha -3 \right)  \lambda }   \int_{0}^t  e^{\left( \frac 8\alpha -3 \right)  \lambda (s-t) } \|A^{\frac58}v(s,\theta_{-t}\omega, v(0) )\|^2  \d s
 \\
 & \leq  \frac{2}{\alpha }   e^{\left( \frac 8\alpha -3 \right)  \lambda }   \left( e^{-\lambda t}\|v(0)\|^2
 +   \zeta_1(\omega)\right) , \quad    t\geq T_1(\omega)>1 . \nonumber
\end{align*}
Hence, it is easy to get for $t\geq T_{B_1}(\omega):=T_{B}(\omega)+1$
\begin{align*}
&   \|A^{\frac54}v(t,\theta_{-t}\omega, v(0) )\|^2 +   \nu \int_{t-\frac 14 }^t\|A^{\frac{15}{8}} v(s ,\theta_{-t}\omega, v(0))\|^2 \d s\nonumber\\
&   \leq\|A^{\frac54}v(t,\theta_{-t}\omega, v(0) )\|^2\nonumber\\
& +   \nu \int_{t-\frac 14 }^te^{  C\int_{s}^t\|A^{\frac{5}{8}}v(\tau ,\theta_{-t}\omega, v(0))\|^2\d \tau+C\int_{s-t}^0|z(\theta_\tau\omega)|^2 \d \tau}\|A^{\frac{15}{8}} v(s ,\theta_{-t}\omega, v(0))\|^2 \d s\nonumber\\
& \leq  C e^{ C \int_{t-\frac{1}{2}}^t\|A^{\frac{5}{8}}v(\tau ,\theta_{-t}\omega, v(0))\|^2\d \tau+C \int_{-\frac{1}{2}}^0 |z (\theta_{\tau}\omega)|^2 \d \tau}\nonumber\\
&\cdot\left [ \int_{t-\frac{1}{2}}^t   \|A^{\frac54}v(s,\theta_{-t}\omega, v(0) )\|^2\d s+\int_{-\frac{1}{2}}^0 \left(1+ |z(\theta_{s} \omega)|^4  \right)  \d s\right]\nonumber\\
&\leq Ce^{C\zeta_1(\omega)+C \int_{-\frac{1}{2}}^0 |z (\theta_{\tau}\omega)|^2 \d \tau}\left(\zeta_2(\omega)+\int_{-\frac{1}{2}}^0 (1+ |z(\theta_{s} \omega)|^4  )  \d s\right)\nonumber\\
&\leq Ce^{C\zeta_1(\omega)+C \int_{-\frac{1}{2}}^0 |z (\theta_{\tau}\omega)|^2 \d \tau}\left(\zeta_2(\omega)+\int_{-\frac{1}{2}}^0 |z(\theta_{s} \omega)|^4    \d s\right),
\end{align*}
where  we have used  $\zeta_2(\omega)>\zeta_1(\omega)> 1$. Let
\begin{align}\label{7.4}
 \zeta_9(\omega) :=  Ce^{C\zeta_1(\omega)+C \int_{-\frac{1}{2}}^0 |z (\theta_{\tau}\omega)|^2 \d \tau}\left(\zeta_2(\omega)+\int_{-\frac{1}{2}}^0 |z(\theta_{s} \omega)|^4    \d s\right)  ,
\quad \omega\in \Omega,
\end{align}
then $\zeta_9(\omega)$ is a tempered random variable such that $\zeta_9(\omega) > \zeta_2(\omega)> 1$.  This completes the proof of Lemma \ref{lemma7.1}.

\end{proof}

\subsection{Estimate in $H^{k}$ for $k>\frac{5}{2}$}
In this subsection, let $f\in H^{k-\frac{5}{4}}$ and $k>\frac{5}{2}$ and $h\in H^{k+\frac{5}{4}}$,     an  $H^k$ random absorbing set of system \eqref{2.2} is proved.
\begin{lemma}[$H^{k}$ bound for $k>\frac{5}{2}$] \label{lemma7.2}
Assume that  $f\in H^{k-\frac54}$ and Assumption \ref{assum1} hold.
Then  there exists a random variable $ T_{k}(\omega)>T_{k-1}(\omega)$ such that,  for any $t\geq T_{k}(\omega)$,
\[
  \big\|A^{\frac k2}v(t,\theta_{-t}\omega,v(0))  \big\|^2 + \int_{t-\frac{1}{2^{k-\frac{1}{2}}} }^t \|A^{\frac{k}{2}+\frac{5}{8}}v(s,\theta_{-t}\omega,v(0))\|^2   \d s
\leq  \zeta_k(\omega),
\]
here $\zeta_k(\omega)$  is defined by \eqref{7.10} such that $\zeta_k(\omega) > \zeta_{k-1}(\omega)> 1$, $ \omega\in\Omega$.
\end{lemma}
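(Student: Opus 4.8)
The plan is to argue by induction on the regularity index, taking Lemma \ref{lemma7.1} as the base case $k=\frac52$ and raising $k$ by one unit at each step, so that the bounds already established at level $k-1$ (the uniform control of $\|A^{\frac{k-1}2}v\|^2$ and the time-integrated control of $\|A^{\frac{k-1}2+\frac58}v\|^2$) are available as the induction hypothesis. The scheme copies that of Lemma \ref{lemma7.1}, but on a time window that contracts geometrically from one level to the next. First I would take the inner product of \eqref{2.2} in $H$ with $A^{k}v$ and integrate by parts to obtain the energy identity
\be
\frac12\frac{\d}{\d t}\|A^{\frac k2}v\|^2+\nu\|A^{\frac k2+\frac58}v\|^2
&=-\big(B(v+hz(\theta_t\omega),v+hz(\theta_t\omega)),A^{k}v\big)\\
&\quad+\big(f-\nu A^{\frac54}hz(\theta_t\omega)+hz(\theta_t\omega),A^{k}v\big).
\ee

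The main obstacle is the nonlinear term. After one integration by parts I would distribute the $k$ derivatives by the Leibniz rule and bound each resulting product using H\"older's inequality, the Sobolev embedding, the Gagliardo--Nirenberg inequality and Young's inequality, choosing the interpolation exponents so that exactly one top-order factor $\|A^{\frac k2+\frac58}v\|$ occurs in every term; that factor is absorbed into the dissipation $\nu\|A^{\frac k2+\frac58}v\|^2$, leaving coefficients assembled only from norms of order at most $\frac{k-1}2+\frac58$, together with $|z(\theta_t\omega)|$ and the $H^{k+\frac54}$-norm of $h$ furnished by Assumption \ref{assum1}. The forcing term is treated by pairing $\|A^{\frac k2-\frac58}f\|$ and $\|A^{\frac k2+\frac58}h\|\,|z(\theta_t\omega)|$ against $\|A^{\frac k2+\frac58}v\|$, which is precisely why the hypotheses $f\in H^{k-\frac54}$ and $h\in H^{k+\frac54}$ are needed. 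This yields a differential inequality of the form
\be
\frac{\d}{\d t}\|A^{\frac k2}v\|^2+\nu\|A^{\frac k2+\frac58}v\|^2
&\leq C\big(1+\|A^{\frac{k-1}2}v\|^{p}+|z(\theta_t\omega)|^{p}\big)\|A^{\frac k2}v\|^2\\
&\quad+C\big(1+|z(\theta_t\omega)|^{q}\big),
\ee
for suitable exponents $p,q$, in which the coefficient multiplying $\|A^{\frac k2}v\|^2$ is integrable in time by the level-$(k-1)$ bounds.

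Finally I would apply Gronwall's lemma on $(\eta,t)$ with $\eta$ in the shrinking window $\big(t-2^{-(k-\frac32)},\,t-2^{-(k-\frac12)}\big)$, integrate the outcome over $\eta$ in that window, and replace $\omega$ by $\theta_{-t}\omega$. The resulting exponential factor and the forcing integral are then controlled by the induction hypothesis at level $k-1$: the sup-bound $\sup_s\|A^{\frac{k-1}2}v\|^2\le\zeta_{k-1}(\omega)$ and the integral bound $\int\|A^{\frac{k-1}2+\frac58}v\|^2\,\d s\le\zeta_{k-1}(\omega)$ keep the Gronwall exponent finite and the right-hand integrals bounded, because the level-$(k-1)$ window $[t-2^{-(k-\frac32)},t]$ contains the interval on which these integrals are taken. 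This delivers the asserted estimate with
\be
\zeta_k(\omega):=Ce^{C\zeta_{k-1}(\omega)+C\int_{-\delta_k}^0|z(\theta_\tau\omega)|^2\,\d\tau}\Big(\zeta_{k-1}(\omega)+\int_{-\delta_k}^0|z(\theta_s\omega)|^{q}\,\d s\Big),\quad \delta_k:=2^{-(k-\frac32)},
\ee
a tempered random variable satisfying $\zeta_k(\omega)>\zeta_{k-1}(\omega)>1$. I expect the only genuine difficulty to be the bookkeeping of the interpolation in the nonlinear estimate, namely verifying that after distributing $k$ derivatives onto $(u\cdot\nabla)u$ each product leaves exactly one top-order factor and that all the remaining factors sit at order $\le\frac{k-1}2+\frac58$, so that they fall under the induction hypothesis; the geometric contraction of the window then guarantees that the iteration closes for every $k\ge\frac52$.
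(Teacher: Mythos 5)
Your proposal is correct and is essentially the paper's own argument: test \eqref{2.2} with $A^{k}v$, estimate the trilinear term so that exactly one top-order factor $\|A^{\frac{k}{2}+\frac{5}{8}}v\|$ appears and is absorbed into the dissipation, use $f\in H^{k-\frac54}$ and $h\in H^{k+\frac54}$ on the forcing terms, apply Gronwall on the dyadically shrinking window, integrate over that window, replace $\omega$ by $\theta_{-t}\omega$, and close with the previous level's estimate, defining $\zeta_k$ as in \eqref{7.10}.

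Two bookkeeping differences are worth recording. First, the paper's induction step is $\frac54$, not $1$: the hypothesis \eqref{7.8} is posed at level $k-\frac54$, so its dissipation integral is exactly $\int\|A^{\frac k2}v\|^2\,\d s$, which is precisely the term produced after integrating the Gronwall output over the window; with your unit step the previous level yields $\int\|A^{\frac k2+\frac18}v\|^2\,\d s$, which still suffices but only after an extra Poincar\'e inequality. Second, in \eqref{7.6} the paper fixes the H\"older pair $(L^{12},L^{12/5})$ so that the two-term Kato--Ponce estimate leaves the Gronwall coefficient $C(\|A^{\frac58}v\|^2+|z(\theta_t\omega)|^2)$, involving only the lowest-order norm; its window integral is then controlled by Lemma \ref{lemma4.1} alone, and the induction hypothesis enters only through the window integral of $\|A^{\frac k2}v\|^2$. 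Your coefficient $C(1+\|A^{\frac{k-1}{2}}v\|^{p}+|z(\theta_t\omega)|^{p})$ instead requires level-$(k-1)$ information already in the Gronwall exponent; be careful here, because the hypothesis in the form \eqref{7.8} supplies a bound at the endpoint $s=t$ plus a window integral (both along the fixed path $\theta_{-t}\omega$), not a genuine $\sup_{s}$ over the window, so you should take $p=2$ (which the natural estimate indeed gives, and which the window integral controls via Poincar\'e) or else invoke the $\varepsilon$-shift device of Lemma \ref{lemma4.4} to upgrade the endpoint bound to a sup over the window.
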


\begin{proof}
Taking the inner product of the system \eqref{2.2} in $H$ by $A^{k}v$ and  integration by parts, we have
\begin{align}\label{7.5}
\frac 12\frac{\d}{\d t}\|A^{\frac k2}v\|^2+ \nu \|A^{\frac{k}{2}+\frac{5}{8}}v\|^2&=-\big(B(v+hz(\theta_t\omega),v+hz(\theta_t\omega)), A^{k}v \big)\nonumber\\
& +\big(f-\nu A^{\frac54}hz(\theta_t\omega) +hz(\theta_t\omega),A^{k}v \big).
\end{align}
For the first term on the right hand side of \eqref{7.5}, by integration by parts and
 applying the H\"{o}lder inequality,  the Sobolev embedding inequality  and the Young inequality, we can deduce that
\begin{align}\label{7.6}
&\big| \big(B(v+hz(\theta_t\omega),v+hz(\theta_t\omega)), A^{k}v \big)\big|
 \nonumber\\
&  \leq C\|A^{\frac{k}{2}+\frac{5}{8}}v\|\|A^{\frac{k}{2}-\frac{5}{8}}(v+hz(\theta_t\omega))\|_{L^{12}}
\|A^{\frac12}(v+hz(\theta_t\omega))\|_{L^{\frac{12}{5}}}\nonumber\\
&+C\|A^{\frac{k}{2}+\frac{5}{8}}v\|\|v+hz(\theta_t\omega)\|_{L^{12}}
\|A^{\frac{k}{2}-\frac{1}{8}}(v+hz(\theta_t\omega))\|_{L^{\frac{12}{5}}}\nonumber\\
&  \leq C\|A^{\frac{k}{2}+\frac{5}{8}}v\|\|A^{\frac{k}{2}}(v+hz(\theta_t\omega))\|
\|A^{\frac{5}{8}}(v+hz(\theta_t\omega))\|\nonumber\\
& \leq \frac{\nu}{4}\|A^{\frac{k}{2}+\frac{5}{8}}v\|^2+C\|A^{\frac{5}{8}}(v+hz(\theta_t\omega))\|^2
\|A^{\frac{k}{2}}(v+hz(\theta_t\omega))\|^2
\nonumber\\
&  \leq \frac{\nu}{4}\|A^{\frac{k}{2}+\frac{5}{8}}v\|^2+C(\|A^{\frac{5}{8}}v\|^2+\|A^{\frac{5}{8}}h\|^2|z(\theta_t\omega)|^2)
(\|A^{\frac{k}{2}}v\|^2+\|A^{\frac{k}{2}}h\|^2|z(\theta_t\omega)|^2)\nonumber\\
&  \leq \frac{\nu}{4}\|A^{\frac{k}{2}+\frac{5}{8}}v\|^2+C(\|A^{\frac{5}{8}}v\|^2+|z(\theta_t\omega)|^2)
(\|A^{\frac{k}{2}}v\|^2+|z(\theta_t\omega)|^2)\nonumber\\
&  \leq \frac{\nu}{4}\|A^{\frac{k}{2}+\frac{5}{8}}v\|^2+C(\|A^{\frac{5}{8}}v\|^2+|z(\theta_t\omega)|^2)
\|A^{\frac{k}{2}}v\|^2+C|z(\theta_t\omega)|^4.
\end{align}
For the second term on the right hand side of \eqref{7.5}, by integration by parts and
 applying the similar method, we also deduce that
\begin{align}
&\big (f-\nu A^{\frac{5}{4}}hz(\theta_t\omega) +hz(\theta_t\omega),A^{k}v \big)\nonumber\\
& \leq \|A^{\frac{k}{2}-\frac{5}{8}}f\| \|A^{\frac{k}{2}+\frac{5}{8}}v\|+  \nu\|A^{\frac{k}{2}+\frac{5}{8}}h\||z(\theta_t\omega)|\|A^{\frac{k}{2}+\frac{5}{8}}v\|+ C  \|A^{\frac{k}{2}-\frac{5}{8}}h\||z(\theta_t\omega)|\|A^{\frac{k}{2}+\frac{5}{8}}v\|\nonumber\\
& \leq \frac  \nu 4 \|A^{\frac{k}{2}+\frac{5}{8}}v\|^2+C\|A^{\frac{k}{2}-\frac{5}{8}}f\|^2+ C  \|A^{\frac{k}{2}+\frac{5}{8}}h\|^2|z(\theta_t\omega)|^2 + C  \|A^{\frac{k}{2}-\frac{5}{8}}h\|^2|z(\theta_t\omega)|^2\nonumber\\
&\leq  \frac  \nu 4 \|A^{\frac{k}{2}+\frac{5}{8}}v\|^2+C(\|A^{\frac{k}{2}-\frac{5}{8}}f\|^2+ \|A^{\frac{k}{2}+\frac{5}{8}}h\|^2 |z(\theta_t\omega)|^2 )\nonumber\\
&\leq  \frac  \nu 4 \|A^{\frac{k}{2}+\frac{5}{8}}v\|^2+C(\|A^{\frac{k}{2}-\frac{5}{8}}f\|^2+ |z(\theta_t\omega)|^2).\label{7.7}
\end{align}
Inserting \eqref{7.6} and \eqref{7.7} into \eqref{7.5}, it is easy to deduce that
\begin{align}\label{7.20}
&\frac{\d}{\d t}\|A^{\frac k2}v\|^2 +  \nu  \|A^{\frac{k}{2}+\frac{5}{8}}v\|^2\nonumber \\
&\leq  C\left ( \|A^{\frac58}v\|^2+ |z(\theta_t\omega)|^2 \right)\|A^{\frac k2}v\|^2  +
C \left(1 +|z(\theta_t\omega)|^4+\|A^{\frac{k}{2}-\frac{5}{8}}f\|^2 \right)\nonumber \\
&\leq  C\left ( \|A^{\frac58}v\|^2+ |z(\theta_t\omega)|^2 \right)\|A^{\frac k2}v\|^2  +
C \left(1 +|z(\theta_t\omega)|^4 \right).
\end{align}
For $\eta\in  (t-\frac{1}{2^{k-\frac{3}{2}}},t-\frac{1}{2^{k-\frac{1}{2}}})$, applying Gronwall's lemma on $(\eta, t)$  for $t >1$, then we have
\begin{align*}
&\|A^{\frac k2}v(t)\|^2
+ \nu \int_\eta ^te^{ C\int_s^t(\|A^{\frac{5}{8}}v\|^2+|z(\theta_\tau \omega)|^2) \d \tau}\|A^{\frac{k}{2}+\frac{5}{8}} v(s)\|^2  \d s \nonumber\\
&\leq e^{ C\int_\eta^t(\|A^{\frac{5}{8}}v\|^2+|z(\theta_\tau\omega)|^2) \d \tau}\|A^{\frac k2}v( \eta)\|^2 +C\int_\eta^te^{C \int_s^t (\|A^{\frac{5}{8}}v\|^2+ |z (\theta_\tau\omega)|^2) \d \tau} \left (1 +|z(\theta_s\omega)|^4 \right)\d s,
\end{align*}
and  integrating over $\eta\in (t-\frac{1}{2^{k-\frac{3}{2}}},t-\frac{1}{2^{k-\frac{1}{2}}}) $ yields
\begin{align*}
& \frac{1}{2^{k-\frac{1}{2}}}  \|A^{\frac k2}v(t )\|^2
+  \frac{\nu}{2^{k-\frac{1}{2}}}\int_{t-\frac{1}{2^{k-\frac{1}{2}}}}^t e^{ C\int_s^t(\|A^{\frac{5}{8}}v\|^2+|z(\theta_\tau\omega)|^2) \d \tau}\|A^{\frac{k}{2}+\frac{5}{8}}v(s)\|^2 \d s \nonumber\\
& \leq \int_{t-\frac{1}{2^{k-\frac{3}{2}}}}^{t-\frac{1}{2^{k-\frac{1}{2}}}}  e^{ C\int_\eta^t(\|A^{\frac{5}{8}}v\|^2+|z(\theta_\tau\omega)|^2) \d \tau} \|A^{\frac k2}v( \eta)\|^2  \d \eta
 \nonumber\\
& +C\int_{t-\frac{1}{2^{k-\frac{3}{2}}}}^te^{ C\int_s^t (\|A^{\frac{5}{8}}v\|^2+ |z (\theta_\tau\omega)|^2) \d \tau} \left (1 +|z(\theta_s\omega)|^4 \right) \d s \nonumber\\
&\leq Ce^{   C \int_{ t-\frac{1}{2^{k-\frac{3}{2}}}}^t (\|A^{\frac{5}{8}}v\|^2+|z(\theta_\tau\omega)|^2)  \d \tau}  \left[  \int_{t-\frac{1}{2^{k-\frac{3}{2}}}}^{t  }  \|A^{\frac k2}v( \eta)\|^2  \d \eta  + \int_{t-\frac{1}{2^{k-\frac{3}{2}}}}^t \left (1 +|z(\theta_s\omega)|^4 \right)\d s\right]  .
\end{align*}
Replacing $\omega$ with $\theta_{-t}\omega$,  we can deduce that
\begin{align*}
&   \|A^{\frac k2}v(t,\theta_{-t}\omega, v(0) )\|^2\nonumber\\
& +   \nu \int_{t-\frac{1}{2^{k-\frac{1}{2}}}}^te^{  C\int_{s}^t\|A^{\frac{5}{8}}v(\tau ,\theta_{-t}\omega, v(0))\|^2\d \tau+C\int_{s-t}^0|z(\theta_\tau\omega)|^2 \d \tau}\|A^{\frac{k}{2}+\frac{5}{8}} v(s ,\theta_{-t}\omega, v(0))\|^2 \d s\nonumber\\
&\leq  C e^{ C \int_{t-\frac{1}{2^{k-\frac{3}{2}}}}^t(\|A^{\frac{5}{8}}v(\tau ,\theta_{-t}\omega, v(0))\|^2+ |z (\theta_{\tau-t}\omega)|^2) \d \tau}\nonumber\\
&\cdot\left [ \int_{t-\frac{1}{2^{k-\frac{3}{2}}}}^t   \|A^{\frac k2}v(s,\theta_{-t}\omega, v(0) )\|^2\d s+\int_{t-\frac{1}{2^{k-\frac{3}{2}}}}^t \left(1+ |z(\theta_{s-t} \omega)|^4  \right)  \d s\right]\nonumber\\
& =  C e^{ C \int_{t-\frac{1}{2^{k-\frac{3}{2}}}}^t\|A^{\frac{5}{8}}v(\tau ,\theta_{-t}\omega, v(0))\|^2\d \tau+C \int_{-\frac{1}{2^{k-\frac{1}{2}}}}^0 |z (\theta_{\tau}\omega)|^2 \d \tau}\nonumber\\
&\cdot\left [ \int_{t-\frac{1}{2^{k-\frac{3}{2}}}}^t   \|A^{\frac k2}v(s,\theta_{-t}\omega, v(0) )\|^2\d s+\int_{-\frac{1}{2^{k-\frac{3}{2}}}}^0 \left(1+ |z(\theta_{s} \omega)|^4  \right)  \d s\right].
\end{align*}
Applying the above Lemma \ref{lemma4.1}, we have
\begin{align*}
&\int_{t-\frac{1}{2^{k-\frac{3}{2}}}}^t\|A^{\frac{5}{8}}v(\tau ,\theta_{-t}\omega, v(0))\|^2\d \tau\nonumber\\
&\leq e^{(\frac{8}{\alpha}-3)\lambda}\int_{t-\frac{1}{2^{k-\frac{3}{2}}}}^te^{(\frac{8}{\alpha}-3)\lambda(\tau-t)}
\|A^{\frac{5}{8}}v(\tau ,\theta_{-t}\omega, v(0))\|^2\d \tau\nonumber\\
&\leq e^{(\frac{8}{\alpha}-3)\lambda}\int_{0}^te^{(\frac{8}{\alpha}-3)\lambda(\tau-t)}
\|A^{\frac{5}{8}}v(\tau ,\theta_{-t}\omega, v(0))\|^2\d \tau\nonumber\\
&\leq \frac{2}{\alpha\nu}e^{(\frac{8}{\alpha}-3)\lambda}(e^{-\lambda t}\|v_0\|^2+\zeta_1(\omega)),~~t\geq T_1(\omega)>1.
\end{align*}
Moreover, it yields
\begin{align}
&   \|A^{\frac k 2}v(t,\theta_{-t}\omega, v(0) )\|^2 +   \nu \int_{t-\frac{1}{2^{k-\frac{1}{2}}} }^t\|A^{\frac{k}{2}+\frac{5}{8}} v(s ,\theta_{-t}\omega, v(0))\|^2 \d s\nonumber\\
&   \leq\|A^{\frac k2}v(t,\theta_{-t}\omega, v(0) )\|^2\nonumber\\
& +   \nu \int_{t-\frac{1}{2^{k-\frac{1}{2}}} }^te^{  C\int_{s}^t\|A^{\frac{5}{8}}v(\tau ,\theta_{-t}\omega, v(0))\|^2\d \tau+C\int_{s-t}^0|z(\theta_\tau\omega)|^2 \d \tau}\|A^{\frac{k}{2}+\frac{5}{8}} v(s ,\theta_{-t}\omega, v(0))\|^2 \d s\nonumber\\
& \leq  C e^{ C \int_{t-\frac{1}{2^{k-\frac{3}{2}}}}^t\|A^{\frac{5}{8}}v(\tau ,\theta_{-t}\omega, v(0))\|^2\d \tau+C \int_{-\frac{1}{2^{k-\frac{3}{2}}}}^0 |z (\theta_{\tau}\omega)|^2 \d \tau}\nonumber\\
&\cdot\left [ \int_{t-\frac{1}{2^{k-\frac{3}{2}}}}^t   \|A^{\frac k2}v(s,\theta_{-t}\omega, v(0) )\|^2\d s+\int_{-\frac{1}{2^{k-\frac{3}{2}}}}^0 \left(1+ |z(\theta_{s} \omega)|^4  \right)  \d s\right]\nonumber\\
&\leq Ce^{C\zeta_1(\omega)+C \int_{-\frac{1}{2^{k-\frac{3}{2}}}}^0 |z (\theta_{\tau}\omega)|^2 \d \tau}\nonumber\\
&\cdot\left [ \int_{t-\frac{1}{2^{k-\frac{3}{2}}}}^t   \|A^{\frac k2}v(s,\theta_{-t}\omega, v(0) )\|^2\d s+\int_{-\frac{1}{2^{k-\frac{3}{2}}}}^0 \left(1+ |z(\theta_{s} \omega)|^4  \right)  \d s\right],\label{7.9}
\end{align}
where  we have used  $\zeta_1(\omega)> 1$. In order to prove the estimation of $H^k$ of solutions of the system \eqref{2.2}, we let the estimation of $H^{k-\frac54}$ of solutions of the system \eqref{2.2} satisfy the following inequality \eqref{7.8}, then there exist a random variables $\zeta_{k-1}(\omega)$ and $T_{k-1}(\omega)$ such that
\begin{align}\label{7.8}
  \big\|A^{\frac{k}{2}-\frac{5}{8}}v(t,\theta_{-t}\omega,v(0))  \big\|^2 + \int_{t-\frac{1}{2^{k-\frac{3}{2}}} }^t \|A^{\frac{k}{2}}v(s,\theta_{-t}\omega,v(0))\|^2   \d s
\leq  \zeta_{k-1}(\omega),
\end{align}
here $\zeta_{k-1}(\omega)$ is a tempered random variable such that $\zeta_{k-1}(\omega)>\zeta_{9}(\omega)>1$. Inserting \eqref{7.8} into \eqref{7.9} yields
\begin{align}
&   \|A^{\frac k 2}v(t,\theta_{-t}\omega, v(0) )\|^2 +   \nu \int_{t-\frac{1}{2^{k-\frac{1}{2}}} }^t\|A^{\frac{k}{2}+\frac{5}{8}} v(s ,\theta_{-t}\omega, v(0))\|^2 \d s\nonumber\\
&\leq Ce^{C\zeta_1(\omega)+C \int_{-\frac{1}{2^{k-\frac{3}{2}}}}^0 |z (\theta_{\tau}\omega)|^2 \d \tau}\cdot\left [ \zeta_{k-1}(\omega)+\int_{-\frac{1}{2^{k-\frac{3}{2}}}}^0 \left(1+ |z(\theta_{s} \omega)|^4  \right)  \d s\right]\nonumber\\
&\leq Ce^{C\zeta_1(\omega)+C \int_{-\frac{1}{2^{k-\frac{3}{2}}}}^0 |z (\theta_{\tau}\omega)|^2 \d \tau}\cdot\left [ \zeta_{k-1}(\omega)+\int_{-\frac{1}{2^{k-\frac{3}{2}}}}^0  |z(\theta_{s} \omega)|^4    \d s\right].
\end{align}
Let
\begin{align}\label{7.10}
 \zeta_k(\omega) :=  Ce^{C\zeta_1(\omega)+C \int_{-\frac{1}{2^{k-\frac{3}{2}}}}^0 |z (\theta_{\tau}\omega)|^2 \d \tau}\left(\zeta_{k-1}(\omega)+\int_{-\frac{1}{2^{k-\frac{3}{2}}}}^0 |z(\theta_{s} \omega)|^4    \d s\right)  ,
\quad \omega\in \Omega,
\end{align}
here $\zeta_k(\omega)$ is a tempered random variable such that $\zeta_k(\omega) > \zeta_{k-1}(\omega)> 1$.  This completes the proof of Lemma \ref{lemma7.2}.

\end{proof}

\subsection{Local $(H , H^{k})$-Lipschitz continuity}
In this section, let $k\geq\frac{5}{2}$, we will prove a local $(H, H^k)$-Lipschitz continuity in initial values of the solutions of the random fractional three-dimensional NS  equations \eqref{2.2}.   This will be done step-by-step by   showing the local $(H,H^{k-\frac{5}{4}})$-Lipschitz continuity, the local $(H^{k-\frac{5}{4}},H^k)$-Lipschitz continuity and finally the local $(H,H^k)$-Lipschitz continuity.

Let $k\geq\frac{5}{2}$. The random set $\mathfrak{B}^k=\{\mathfrak{B}^k(\omega)\}_{\omega\in\Omega}$ in $H^{k}$ is defined by
\begin{align}
\mathfrak{B}^k(\omega):= \left \{ v \in H^{k}:\|A^{\frac k2}v\|^2\leq \zeta_k(\omega)
\right \}, \quad \omega\in\Omega,
\end{align}
where $\zeta_k(\cdot)$ is a tempered random variable is given by \eqref{7.10}.

In order to prove the  $(H , H^{k})$-Lipschitz continuity of the system \eqref{5.1}, we set the  $(H , H^{k-1})$-Lipschitz continuity of the system \eqref{5.1} and get the following main result.

\begin{theorem}[Local $(H, H^{k-\frac{5}{4}})$-Lipschtiz] \label{theorem7.2}
 Let Assumption \ref{assum} hold and $f\in H^{k-\frac{5}{4}}$. Then there
  exist  a random variables $T^*=T_{k-1} (\omega)  $   and $ L_{k-1}(\mathfrak D,\omega )$ such that   two solutions $v_1$ and $v_2$ of random fractional three-dimensional NS equations of \eqref{2.2} corresponding to initial values   $v_{1,0},$ $ v_{2,0}$ in $\mathfrak D \left (\theta_{-T^*(\omega)}\omega \right)$, respectively, satisfy
\begin{align}
  &
  \big\|A^{\frac k2-\frac{5}{8}}(v_1 \!  \big (T^*(\omega),\theta_{-T^*(\omega)}\omega,v_{1,0}\big)
  -v_2 \big (T^*(\omega),\theta_{-T^*(\omega)}\omega,v_{2,0} \big) \big) \|^2 \nonumber\\
& \leq  L_{k-1} (\mathfrak D,\omega)\|v_{1,0}-v_{2,0}\|^2,\quad \omega\in \Omega.
\end{align}
\end{theorem}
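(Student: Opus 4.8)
The plan is to prove this $(H,H^{k-\frac54})$-Lipschitz estimate by a finite induction on the regularity level, bootstrapping by $\frac54$ at each step, since $\frac54$ is exactly the gain supplied by the dissipative operator $A^{\frac54}$. The base cases are already in hand: the local $(H,H^{\frac54})$-Lipschitz continuity of Lemma \ref{lem:H1} and the local $(H,H^{\frac52})$-Lipschitz continuity of Theorem \ref{theorem5.6}. Writing $\sigma:=k-\frac54\geq\frac54$, I would take as inductive hypothesis that the local $(H,H^{\sigma-\frac54})$-Lipschitz continuity holds for every tempered set $\mathfrak D\in\D_H$, and upgrade it to the $(H,H^{\sigma})$-level. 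The whole argument is modeled on the passage from Lemma \ref{lem:H1} to Theorem \ref{theorem5.6} in Section \ref{sec5}, with each fractional index raised by $\frac\sigma2-\frac54$ (so that the lower index $\frac58$ becomes $\frac\sigma2-\frac58$, the target index $\frac54$ becomes $\frac\sigma2$, and the dissipation index $\frac{15}{8}$ becomes $\frac\sigma2+\frac58$).

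The core of the inductive step is a local $(H^{\sigma-\frac54},H^{\sigma})$-Lipschitz estimate on the $H^{\sigma-\frac54}$ absorbing set, the exact analogue of Lemma \ref{lemma5.1}. To obtain it I would take the inner product of the difference system \eqref{5.1} with $A^{\sigma}\bar v$; since the forcing, the Ornstein--Uhlenbeck term $hz(\theta_t\omega)$ and $-\nu A^{\frac54}hz(\theta_t\omega)$ all cancel in the difference, only the two trilinear contributions $\big(B(\bar v,v_1+hz(\theta_t\omega)),A^{\sigma}\bar v\big)$ and $\big(B(v_2+hz(\theta_t\omega),\bar v),A^{\sigma}\bar v\big)$ survive. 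After integration by parts to redistribute the derivatives, each term is controlled by H\"older's inequality, the Sobolev embedding and the Gagliardo--Nirenberg inequality so that the top-order factor $\|A^{\frac\sigma2+\frac58}\bar v\|$ occurs with an exponent absorbable into the dissipation $\nu\|A^{\frac\sigma2+\frac58}\bar v\|^2$ via Young's inequality; the residual factors are the $H^{\sigma}$-norms of $v_1,v_2$, uniformly bounded on the absorbing set by the $H^k$-bound of Lemma \ref{lemma7.2}, together with the intermediate norm $\|A^{\frac\sigma2-\frac58}\bar v\|$. Assumption \ref{assum1} (which gives $h\in H^{k+\frac54}=H^{\sigma+\frac52}$) guarantees that all the norms $\|A^{\frac\sigma2}(v_i+hz(\theta_t\omega))\|$ appearing in these bounds are finite with ample margin. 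A Gronwall argument over a unit time window, followed by integrating the window endpoint exactly as in \eqref{5.9}--\eqref{5.12}, then yields
\[
 \|A^{\frac\sigma2}\bar v(T_\omega,\theta_{-T_\omega}\omega,\bar v(0))\|^2\leq L(\mathfrak B,\omega)\,\|A^{\frac\sigma2-\frac58}\bar v(0)\|^2
\]
for initial data in the $H^{\sigma-\frac54}$ absorbing set, with $L(\mathfrak B,\cdot)$ tempered; here the exponentially weighted $L^2$-in-time integrals of $\|A^{\frac58}v_i\|^2$ and $\|A^{\frac\sigma2}v_i\|^2$ furnished by Lemmas \ref{lemma4.1}, \ref{lemma7.1} and \ref{lemma7.2} are what make the Gronwall coefficient integrable and the final constant tempered.

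Finally I would splice the two pieces together via the cocycle property, precisely as Theorem \ref{theorem5.6} combines Lemma \ref{lemma5.1} with Lemma \ref{lem:H1}: running the system first for a random time that drives initial data from a general tempered set $\mathfrak D$ into the $H^{\sigma-\frac54}$ absorbing set --- during which the difference is controlled in $H^{\sigma-\frac54}$ by the inductive hypothesis --- and then applying the absorbing-set estimate above, one recovers the asserted $(H,H^{\sigma})$-Lipschitz bound with $T^*=T_{k-1}(\omega)$ and a tempered $L_{k-1}(\mathfrak D,\cdot)$ assembled as a product of the two intermediate tempered constants evaluated at shifted $\omega$. The main obstacle is the trilinear estimate in the inductive step: one must verify, for every admissible $\sigma$, that the Gagliardo--Nirenberg exponents distribute the derivatives so that $\|A^{\frac\sigma2+\frac58}\bar v\|$ is raised only to a power that the viscosity can absorb, while the remaining factors reduce exactly to absorbing-set $H^{\sigma}$-norms of $v_1,v_2$ and to the lower norm $\|A^{\frac\sigma2-\frac58}\bar v\|$ supplied by the inductive hypothesis; the accompanying technical burden is the careful bookkeeping of the random times and the temperedness of every constant through the cocycle composition.
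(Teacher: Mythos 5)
Your proposal is correct and takes essentially the same route as the paper: the paper gives no standalone proof of Theorem \ref{theorem7.2}, treating it as the hypothesis supplied by iterating the Section \ref{sec7} machinery from the Section \ref{sec5} base cases (Lemma \ref{lem:H1} and Theorem \ref{theorem5.6}), and your inductive step is precisely the paper's Lemma \ref{lemma7.3} (the $(H^{k-\frac54},H^{k})$-Lipschitz estimate on the absorbing set, obtained from the inner product of \eqref{5.1} with $A^{k}\bar v$) composed via the cocycle property exactly as in Theorem \ref{theorem7.4}. The only cosmetic difference is that where you carry the intermediate norm $\|A^{\frac{\sigma}{2}-\frac58}\bar v\|$ in the trilinear bounds, the paper uses the lower norm $\|A^{\frac58}\bar v\|$ (dominated by yours since $\sigma\geq\frac52$), so the two sets of estimates agree.
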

Then we will prove the following $(H^{k-\frac{5}{4}},H^k)$-Lipschitz continuity.

\begin{lemma}[Local $(H^{k-\frac54}, H^{k})$-Lipschitz on $\mathfrak B^{k-\frac{5}{4}}$]\label{lemma7.3}  Let Assumption \ref{assum} hold and $f\in H^{k-\frac{5}{4}}$ for $k\geq\frac{5}{2}$, then there  exist  random variables $T_k(\omega)$ and $ L_k(\mathfrak B^{k-\frac{5}{4}}, \omega )$ such that    two solutions $v_1$ and $v_2$ of random fractional three-dimensional NS equations \eqref{2.2} corresponding to initial values  $v_{1,0},$ $v_{2,0}$ in $\mathfrak{B}^{k-\frac{5}{4}}(\theta_{-T_\omega}\omega)$, respectively,   satisfy
\[
 \big \|A^{\frac k2}(v_1(T_k\omega,\theta_{-T_k}\omega,v_{1,0})-v_2(T_k\omega,\theta_{-T_k}\omega,v_{2,0}) \big) \|{^2}\leq  L_k(\mathfrak B^{k-1}, \omega)\|A^{\frac{k}{2}-\frac58}(v_{1,0}-v_{2,0})\|^2,
 \]
 for $\omega\in \Omega$.
\end{lemma}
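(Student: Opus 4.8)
The plan is to mimic the proof of Lemma \ref{lemma5.1} one regularity level higher, using an induction on $k$ driven by Theorem \ref{theorem7.2} (the level-$(k-\tfrac54)$ Lipschitz estimate) and the $H^k$ absorbing bound of Lemma \ref{lemma7.2}. Setting $\bar v = v_1 - v_2$, which solves \eqref{5.1}, I first take the inner product of \eqref{5.1} in $H$ with $A^{k}\bar v$ and integrate by parts to obtain
\begin{align*}
\frac12\frac{\d}{\d t}\|A^{\frac k2}\bar v\|^2 + \nu\|A^{\frac k2+\frac58}\bar v\|^2
= -\big(B(\bar v, v_1+hz(\theta_t\omega)), A^{k}\bar v\big) - \big(B(v_2+hz(\theta_t\omega), \bar v), A^{k}\bar v\big).
\end{align*}
Exactly as in \eqref{5.6}--\eqref{5.7}, I then estimate the two nonlinear terms by an integration by parts that pairs $A^{\frac k2+\frac58}\bar v$ against $A^{\frac k2-\frac58}$ of the convective term, distributing the $k$ derivatives through a fractional Leibniz rule and interpolating (Gagliardo--Nirenberg together with the three-dimensional embedding $H^{5/4}\hookrightarrow L^{12}$). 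The two worst contributions are schematically $C\|A^{\frac58}(v_j+hz(\theta_t\omega))\|\,\|A^{\frac k2}\bar v\|\,\|A^{\frac k2+\frac58}\bar v\|$ and $C\|A^{\frac k2}(v_j+hz(\theta_t\omega))\|\,\|A^{\frac k2-\frac58}\bar v\|\,\|A^{\frac k2+\frac58}\bar v\|$; Young's inequality absorbs $\|A^{\frac k2+\frac58}\bar v\|^2$ into the dissipation, and $h\in H^{k+\frac54}$ from Assumption \ref{assum1} bounds $\|A^{\frac58}h\|$ and $\|A^{\frac k2}h\|$.

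This yields the level-$k$ analogue of \eqref{mar9.2},
\begin{align*}
\frac{\d}{\d t}\|A^{\frac k2}\bar v\|^2 + \nu\|A^{\frac k2+\frac58}\bar v\|^2
&\leq C\Big(\sum_{j=1}^2\|A^{\frac58}v_j\|^2 + |z(\theta_t\omega)|^2\Big)\|A^{\frac k2}\bar v\|^2 \\
&\quad + C\Big(\sum_{j=1}^2\|A^{\frac k2}v_j\|^2 + |z(\theta_t\omega)|^2\Big)\|A^{\frac k2-\frac58}\bar v\|^2 ,
\end{align*}
in which the coefficient of $\|A^{\frac k2}\bar v\|^2$ is built only from the low-order norms $\|A^{\frac58}v_j\|^2$, while the top-order norms $\|A^{\frac k2}v_j\|^2$ multiply the \emph{lower}-regularity quantity $\|A^{\frac k2-\frac58}\bar v\|^2$. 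Dropping the dissipation and running the uniform-Gronwall device used throughout (Gronwall on $(s,t)$, then integrating $s$ over a short interval as in Lemma \ref{lemma7.2}) gives, in parallel with \eqref{5.9},
\begin{align*}
\|A^{\frac k2}\bar v(t)\|^2 \leq C\,e^{C\int (\sum_j\|A^{\frac58}v_j\|^2 + |z(\theta_\tau\omega)|^2)\,\d\tau}\!\int_{t-\delta}^{t}\Big(1+\sum_{j=1}^2\|A^{\frac k2}v_j\|^2 + |z(\theta_s\omega)|^2\Big)\|A^{\frac k2-\frac58}\bar v(s)\|^2\,\d s .
\end{align*}
The estimate is then closed by two inputs: the level-$(k-\tfrac54)$ Lipschitz differential inequality underlying Theorem \ref{theorem7.2}, whose dissipation is $\nu\|A^{\frac k2}\bar v\|^2$ and which controls $\int \|A^{\frac k2-\frac58}\bar v\|^2(\cdots)\,\d s$ by $\|A^{\frac k2-\frac58}\bar v(0)\|^2$ (the analogues of \eqref{5.11}--\eqref{5.12}); and the $H^k$ bound of Lemma \ref{lemma7.2}, giving $\sup_{s\in[t-1,t]}\|A^{\frac k2}v_j(s,\theta_{-t}\omega,v_{j,0})\|^2\leq \zeta_k(\omega)+\cdots$ once $t$ exceeds the entrance time of $\mathfrak B^{k-\frac54}$ into $\mathfrak B^{k}$ (the analogue of \eqref{mar9.4}). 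Replacing $\omega$ by $\theta_{-t}\omega$ and taking $T_k(\omega)$ large enough that both $v_1,v_2$ have entered $\mathfrak B^{k-\frac54}$ and these bounds hold, I read off the claim with $L_k(\mathfrak B^{k-\frac54},\omega)$ a tempered random variable assembled from $\zeta_1,\zeta_k$ and the integrals of $|z(\theta_s\omega)|^2$.

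The main obstacle is the first step: the fractional-derivative bookkeeping in the nonlinear estimate. In the base case $k=\tfrac52$ of \eqref{5.6}--\eqref{5.7} the integer-order product rule and the single embedding $H^{5/4}\hookrightarrow L^{12}$ suffice, but for general $k\geq\tfrac52$ one must invoke a Kato--Ponce-type fractional Leibniz rule to split the $k$ derivatives across $B(\bar v,v_j+hz(\theta_t\omega))$ and verify that every interpolation exponent stays admissible in three dimensions, all while producing \emph{exactly} the two-term right-hand side above—low-order $v_j$-factor times $\|A^{\frac k2}\bar v\|^2$, top-order $v_j$-factor times $\|A^{\frac k2-\frac58}\bar v\|^2$—which is precisely the structure the induction needs. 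Everything downstream (the Gronwall argument, the short-interval integration, the temperedness of $L_k$, and the substitution $\omega\mapsto\theta_{-t}\omega$) is routine and follows Lemma \ref{lemma5.1} line by line.
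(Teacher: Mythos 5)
Your proposal follows essentially the same route as the paper's proof: the paper also tests \eqref{5.1} with $A^{k}\bar v$, arrives at exactly your two-term differential inequality \eqref{7.16} (with the slightly sharper factor $\|A^{\frac58}\bar v\|^2$ where you write $\|A^{\frac k2-\frac58}\bar v\|^2$; either suffices since $\frac58\le\frac k2-\frac58$), runs the short-interval Gronwall device, and closes the estimate precisely as you describe, by testing \eqref{5.1} with $A^{k-\frac54}\bar v$ so that the dissipation $\nu\|A^{\frac k2}\bar v\|^2$ controls both $\int\|A^{\frac k2}\bar v\|^2\,\d s$ and the weighted integral of $\|A^{\frac k2-\frac58}\bar v\|^2$ by $\|A^{\frac k2-\frac58}\bar v(0)\|^2$ (equations \eqref{7.17}--\eqref{7.18}). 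The only cosmetic deviations are that your intermediate display omits the $\int_{t-\delta}^{t}\|A^{\frac k2}\bar v(s)\|^2\,\d s$ term (which your cited analogue of \eqref{5.11} is exactly what recovers), and that you invoke Lemma \ref{lemma7.2} for $\sup_s\|A^{\frac k2}v_j\|^2$ whereas the paper instead derives an integral bound on $\int_0^t\|A^{\frac k2}v_i\|^2\,\d s$ by testing \eqref{2.2} with $A^{k-\frac54}v$ and using that the initial data lie in $\mathfrak B^{k-\frac54}$ (equations \eqref{7.21}--\eqref{7.24}).
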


\begin{proof}
Let $k\geq\frac{5}{2}$.   Taking the inner product of the system  \eqref{5.1} in $H$ by $A^k\bar{v}$ and integration by parts, we can deduce that
\begin{align}\label{7.11}
&\frac{1}{2}\frac{\d}{\d t}\|A^\frac k2\bar{v}\|^2+ \nu  \|A^{\frac{k}{2}+\frac{5}{8}}\bar{v}\|^2\nonumber\\
&=- \big (B(v_1+hz(\theta_t\omega))-B(v_2+hz(\theta_t\omega)), A^k\bar{v}  \big )\nonumber\\
&=- \big (B(\bar{v},v_1+hz(\theta_t\omega)), A^k\bar{v}  \big )
- \big (B(v_2+hz(\theta_t\omega),\bar{v}), A^k\bar{v} \big ).
\end{align}
For the first term on the right hand side of the system \eqref{7.11}, applying the integration by parts, the H\"{o}lder inequality, the Sobolev embedding inequality  and the Young inequality, we can deduce that
\begin{align}\label{7.12}
 \big| \big(B(\bar{v},v_1+hz(\theta_t\omega)), A^k\bar{v}\big)
 \big| &\leq C\|A^{\frac{k}{2}-\frac{5}{8}}\bar{v}\|_{L^{12}}\|\nabla(v_1+hz(\theta_t\omega))\|_{L^{\frac{12}{5}}}\|A^{\frac{k}{2}+\frac{5}{8}}\bar{v}\|  \nonumber\\
&
 +C\|\bar{v}\|_{L^{12}}\|A^{\frac{k}{2}-\frac{1}{8}}(v_1+hz(\theta_t\omega))\|_{L^{\frac{12}{5}}}\|A^{\frac{k}{2}+\frac{5}{8}}\bar{v}\|
 \nonumber\\
&\leq C\|A^\frac k2\bar{v}\|\|A^\frac58(v_1+hz(\theta_t\omega))\|\|A^{\frac{k}{2}+\frac{5}{8}}\bar{v}\|  \nonumber\\
&
 +C\|A^\frac58\bar{v}\|\|A^\frac k2(v_1+hz(\theta_t\omega))\|\|A^{\frac{k}{2}+\frac{5}{8}}\bar{v}\|
 \nonumber\\
 &\leq \frac{ \nu}{4}\|A^{\frac{k}{2}+\frac{5}{8}}\bar{v}\|^2+C\|A^\frac58(v_1+hz(\theta_t\omega))\|^2\|A^\frac k2\bar{v}\|^2\nonumber\\
 &+C\|A^\frac k2(v_1+hz(\theta_t\omega))\|^2\|A^\frac58\bar{v}\|^2\nonumber\\
 &\leq \frac{ \nu}{4}\|A^{\frac{k}{2}+\frac{5}{8}}\bar{v}\|^2+C(\|A^\frac58v_1\|^2+\|A^\frac58h\|^2|z(\theta_t\omega)|^2)\|A^\frac k2\bar{v}\|^2\nonumber\\
 &+C(\|A^\frac k2v_1\|^2+\|A^\frac k2h\|^2|z(\theta_t\omega)|^2)\|A^\frac58\bar{v}\|^2\nonumber\\
 &\leq \frac{\nu}{4}\|A^{\frac{k}{2}+\frac{5}{8}}\bar{v}\|^2+C(\|A^\frac58v_1\|^2+|z(\theta_t\omega)|^2)\|A^\frac k2\bar{v}\|^2\nonumber\\
 &+C(\|A^\frac k2 v_1\|^2+|z(\theta_t\omega)|^2)\|A^\frac58\bar{v}\|^2.
\end{align}
For the second term on the right hand side of the system \eqref{7.11}, by the similar method, it can get that
\begin{align}\label{7.13}
\big| \big(B(v_2+hz(\theta_t\omega),\bar{v}), A^k\bar{v}\big)\big|
&\leq C\|A^{\frac{k}{2}-\frac{5}{8}}(v_2+hz(\theta_t\omega))\|_{L^{12}}\|\nabla\bar{v}\|_{L^\frac{12}{5}}
\|A^{\frac{k}{2}+\frac{5}{8}}\bar{v}\| \nonumber\\
&
 +C\|v_2+hz(\theta_t\omega)\|_{L^{12}}\|A^{\frac{k}{2}-\frac{1}{8}}\bar{v}\|_{L^{\frac{12}{5}}}
 \|A^{\frac{k}{2}+\frac{5}{8}}\bar{v}\|\nonumber\\
&\leq C\|A^\frac k2(v_2+hz(\theta_t\omega))\|\|A^\frac58\bar{v}\|\|A^{\frac{k}{2}+\frac{5}{8}}\bar{v}\| \nonumber\\
&
 +C\|A^\frac58(v_2+hz(\theta_t\omega))\|\|A^\frac k2\bar{v}\|\|A^{\frac{k}{2}+\frac{5}{8}}\bar{v}\|\nonumber\\
&\leq\frac{\nu}{4}\|A^{\frac{k}{2}+\frac{5}{8}}\bar{v}\|^2+C\|A^\frac k2(v_2+hz(\theta_t\omega))\|^2\|A^\frac58\bar{v}\|^2\nonumber\\
&
 +C\|A^\frac58(v_2+hz(\theta_t\omega))\|^2\|A^\frac k2\bar{v}\|^2\nonumber\\
&\leq\frac{\nu}{4}\|A^{\frac{k}{2}+\frac{5}{8}}\bar{v}\|^2+C(\|A^\frac k2v_2\|^2+\|A^\frac k2h\|^2|z(\theta_t\omega)|^2)
\|A^\frac58\bar{v}\|^2\nonumber\\
&
 +C(\|A^\frac58v_2\|^2+\|A^\frac58h\|^2|z(\theta_t\omega)|^2)\|A^\frac k2\bar{v}\|^2\nonumber\\
&\leq\frac{\nu}{4}\|A^{\frac{k}{2}+\frac{5}{8}}\bar{v}\|^2+C(\|A^\frac58v_2\|^2+|z(\theta_t\omega)|^2)
\|A^\frac k2\bar{v}\|^2\nonumber\\
&
 +C(\|A^\frac k2v_2\|^2+|z(\theta_t\omega)|^2)\|A^\frac58\bar{v}\|^2.
\end{align}
Inserting \eqref{7.12} and \eqref{7.13} into \eqref{7.11} yields
\begin{align}\label{7.16}
\frac{\d}{\d t}\|A^\frac k2\bar{v}\|^2+\nu\|A^{\frac{k}{2}+\frac{5}{8}}\bar{v}\|^2
&\leq C\|A^\frac k2\bar{v}\|^2 \left ( \sum_{j=1}^2 \|A^\frac58v_j\|^2 +|z(\theta_t\omega)|^2 \right ) \nonumber\\  &+C\|A^\frac58\bar{v}\|^2 \left (\sum_{j=1}^2\|A^\frac k2v_j\|^2+|z(\theta_t\omega)|^2 \right),
\end{align}
moreover,
\begin{align}
\frac{\d}{\d t}\|A^\frac k2\bar{v}\|^2
\leq C\|A^\frac k2\bar{v}\|^2 \left ( \sum_{j=1}^2 \|A^\frac58v_j\|^2 +|z(\theta_t\omega)|^2 \right ) +C\|A^\frac58\bar{v}\|^2 \left (\sum_{j=1}^2\|A^\frac k2v_j\|^2+|z(\theta_t\omega)|^2 \right). \nonumber
\end{align}
For any $s\in(t-\frac{1}{2^{k-\frac{3}{2}}},t)$ and $t\geq 1$, applying Gronwall's lemma on $(s,t)$, then we have
\begin{align}
&\|A^\frac k2\bar{v}(t)\|^2
-e^{\int_s^tC( \sum_{i=1}^2 \|A^\frac58v_i\|^2 +|z(\theta_\tau\omega)|^2) \d \tau}\|A^\frac k2\bar{v}(s)\|^2\nonumber\\
&\leq \int_s^tCe^{C\int_\eta^t( \sum_{i=1}^2 \|A^\frac58v_i\|^2+|z(\theta_\tau\omega)|^2) \d \tau}
\|A^\frac58\bar{v}\|^2 \left(\sum_{i=1}^2\|A^\frac k2v_i\|^2+|z(\theta_\eta\omega)|^2\right) \d \eta\nonumber\\
& \leq Ce^{\int_{t-\frac{1}{2^{k-\frac{3}{2}}}}^tC( \sum_{i=1}^2 \|A^\frac58v_i\|^2+|z(\theta_\tau\omega)|^2) \d \tau}
\int_{t-\frac{1}{2^{k-\frac{3}{2}}}}^t\|A^\frac58\bar{v}\|^2\left( \sum_{i=1}^2\|A^\frac k2v_i\|^2+|z(\theta_\eta\omega)|^2 \right) \d \eta. \nonumber
\end{align}
Then we integrate  $s$ on $(t-\frac{1}{2^{k-\frac{3}{2}}},t)$ to get
\begin{align}\label{7.14}
&\|A^\frac k2\bar{v}(t)\|^2
-2^{k-\frac{3}{2}}\int_{t-\frac{1}{2^{k-\frac{3}{2}}}}^te^{\int_s^tC( \sum_{i=1}^2 \|A^\frac58v_i\|^2+|z(\theta_\tau\omega)|^2) \d \tau}\|A^\frac k2\bar{v}(s)\|^2 \d s\nonumber\\
& \leq Ce^{\int_{t-\frac{1}{2^{k-\frac{3}{2}}}}^tC( \sum_{i=1}^2 \|A^\frac58v_i\|^2+|z(\theta_\tau\omega)|^2) \d \tau}
\int_{t-\frac{1}{2^{k-\frac{3}{2}}}}^t\|A^\frac58\bar{v}\|^2\left (\sum_{i=1}^2\|A^\frac k2v_i\|^2+|z(\theta_\eta\omega)|^2\right) \d \eta.
\end{align}

By virtue of  the initial values $v_{1,0}$ and $v_{2,0}$   belong to the $H^\frac54$ random  absorbing set  $\mathfrak B$ and   applying Gronwall's lemma to \eqref{mar9.2}, we can deduce that for $t\geq 1$
\begin{align}
 & \|A^\frac58\bar{v}(t)\|^2 + \nu \int_0^te^{C\int_s^t( \sum_{i=1}^2 \|A^\frac58v_i\|^2+|z(\theta_\tau\omega)|^2) \d \tau}\|A^\frac54\bar{v}(s) \|^2\d s
\nonumber\\
& \leq e^{C\int_0^t( \sum_{i=1}^2 \|A^\frac58v_i\|^2+|z(\theta_\tau\omega)|^2)\d \tau}\|A^\frac58\bar{v}(0)\|^2 . \nonumber
\end{align}
Then we can get the  following  estimate
\begin{align}\label{7.15}
&
\int_{t-\frac{1}{2^{k-\frac{3}{2}}}}^t\|A^\frac58\bar{v}\|^2 \left (\sum_{i=1}^2\|A^\frac k2v_i\|^2+|z(\theta_\eta\omega)|^2\right) \d \eta\nonumber\\
& \leq e^{\int_0^tC( \sum_{i=1}^2 \|A^\frac58v_i\|^2+|z(\theta_\tau\omega)|^2) \d \tau}\|A^\frac58\bar{v}(0)\|^2
\int_{t-\frac{1}{2^{k-\frac{3}{2}}}}^t   \left (\sum_{i=1}^2\|A^\frac k2v_i\|^2+|z(\theta_\eta\omega)|^2\right) \d \eta.
\end{align}
Inserting \eqref{7.15}  into \eqref{7.14} yields
\begin{align}\label{7.19}
 \|A^\frac k2\bar{v}(t)\|^2
&\leq Ce^{C\int_0^t( \sum_{i=1}^2 \|A^\frac58v_i\|^2+|z(\theta_\tau\omega)|^2) \d \tau}\nonumber\\
&\cdot\left (\int_{t-\frac{1}{2^{k-\frac{3}{2}}}}^t\|A^\frac k2\bar{v}(s)\|^2 \d s + \|A^\frac58\bar{v}(0)\|^2
\int_{t-\frac{1}{2^{k-\frac{3}{2}}}}^t   \left (\sum_{i=1}^2\|A^\frac k2v_i\|^2+|z(\theta_\eta\omega)|^2\right) \d \eta\right) .
\end{align}
For $k\geq\frac{5}{2}$, taking the inner product of the system \eqref{5.1} in $H$ by $A^{k-\frac{5}{4}}v$ and  integration by parts, by the similar method and \eqref{7.16}, we also get
\begin{align}\label{7.17}
\frac{\d}{\d t}\|A^{\frac k2-\frac{5}{8}}\bar{v}\|^2+\nu\|A^{\frac{k}{2}}\bar{v}\|^2
&\leq C\|A^{\frac k2-\frac{5}{8}}\bar{v}\|^2 \left ( \sum_{j=1}^2 \|A^\frac58v_j\|^2 +|z(\theta_t\omega)|^2 \right ) \nonumber\\  &+C\|A^\frac58\bar{v}\|^2 \left (\sum_{j=1}^2\|A^{\frac k2-\frac{5}{8}}v_j\|^2+|z(\theta_t\omega)|^2 \right).
\end{align}
Applying Gronwall's lemma to \eqref{7.17}, it yields
\begin{align}
&\|A^{\frac k2-\frac{5}{8}}\bar{v}(t)\|^2
+\nu\int_0^t\|A^\frac k2\bar{v}(s)\|^2\d s\nonumber\\
&\leq\|A^{\frac k2-\frac{5}{8}}\bar{v}(t)\|^2
+\nu\int_0^te^{C\int_\eta^t( \sum_{i=1}^2 \|A^\frac58v_i\|^2+|z(\theta_\tau\omega)|^2) \d \tau}
\|A^\frac k2\bar{v}(\eta)\|^2  \d \eta\nonumber\\
&\leq e^{C\int_0^t( \sum_{i=1}^2 \|A^\frac58v_i\|^2 +|z(\theta_\tau\omega)|^2) \d \tau}\|A^{\frac k2-\frac{5}{8}}\bar{v}(0)\|^2\nonumber\\
&+C\int_0^te^{C\int_\eta^t( \sum_{i=1}^2 \|A^\frac58v_i\|^2+|z(\theta_\tau\omega)|^2) \d \tau}
\|A^\frac58\bar{v}\|^2 \left(\sum_{i=1}^2\|A^{\frac k2-\frac{5}{8}}v_i\|^2+|z(\theta_\eta\omega)|^2\right) \d \eta\nonumber\\
& \leq Ce^{C\int_{0}^t( \sum_{i=1}^2 \|A^\frac58v_i\|^2+|z(\theta_\tau\omega)|^2) \d \tau}(\|A^{\frac k2-\frac{5}{8}}\bar{v}(0)\|^2\nonumber\\
&+\int_{0}^t\|A^\frac58\bar{v}\|^2\left( \sum_{i=1}^2\|A^{\frac k2-\frac{5}{8}}v_i\|^2+|z(\theta_\eta\omega)|^2 \right) \d \eta),\nonumber\\
\end{align}
moreover
\begin{align}\label{7.18}
&\int_{t-\frac{1}{2^{k-\frac{3}{2}}}}^t\|A^\frac k2\bar{v}(s)\|^2 \d s
\leq\int_{0}^t\|A^\frac k2\bar{v}(s)\|^2 \d s\nonumber\\
& \leq Ce^{C\int_{0}^t( \sum_{i=1}^2 \|A^\frac58v_i\|^2+|z(\theta_\tau\omega)|^2) \d \tau}\nonumber\\
&\left(\|A^{\frac k2-\frac{5}{8}}\bar{v}(0)\|^2+\int_{0}^t\|A^\frac58\bar{v}\|^2\left( \sum_{i=1}^2\|A^{\frac k2-\frac{5}{8}}v_i\|^2+|z(\theta_\eta\omega)|^2 \right) \d \eta\right)\nonumber\\
&\leq Ce^{C\int_{0}^t( \sum_{i=1}^2 \|A^\frac58v_i\|^2+|z(\theta_\tau\omega)|^2) \d \tau}\nonumber\\
&\left(\|A^{\frac k2-\frac{5}{8}}\bar{v}(0)\|^2+\|A^\frac58\bar{v}(0)\|^2\int_{0}^t\left( \sum_{i=1}^2\|A^{\frac k2-\frac{5}{8}}v_i\|^2+|z(\theta_\eta\omega)|^2 \right) \d \eta\right)\nonumber\\
&\leq Ce^{C\int_{0}^t( \sum_{i=1}^2 \|A^\frac58v_i\|^2+|z(\theta_\tau\omega)|^2) \d \tau}\|A^{\frac k2-\frac{5}{8}}\bar{v}(0)\|^2\nonumber\\
&\left(1+\int_{0}^t\left( \sum_{i=1}^2\|A^{\frac k2-\frac{5}{8}}v_i\|^2+|z(\theta_\eta\omega)|^2 \right) \d \eta\right).
\end{align}
Inserting \eqref{7.18} into \eqref{7.19} yields
\begin{align}
 \|A^\frac k2\bar{v}(t)\|^2
&\leq Ce^{C\int_0^t( \sum_{i=1}^2 \|A^\frac58v_i\|^2+|z(\theta_\tau\omega)|^2) \d \tau}\|A^{\frac k2-\frac{5}{8}}\bar{v}(0)\|^2\nonumber\\
&\cdot\left (1 +
\int_{0}^t   \left (\sum_{i=1}^2\|A^\frac k2v_i\|^2+|z(\theta_\eta\omega)|^2\right) \d \eta\right) .
\end{align}
Replacing $\omega$ by $\theta_{-t}\omega$,  it is easy to get
\begin{align}
 \|A^\frac k2\bar{v}(t,\theta_{-t}\omega,\bar{v}(0))\|^2
&\leq Ce^{C\int_0^t( \sum_{i=1}^2\|A^\frac58v_i(\tau,\theta_{-t}\omega,v_i(0))\|^2
+|z(\theta_{\tau-t}\omega)|^2)\d \tau}\|A^{\frac k2-\frac58}\bar{v}(0)\|^2\nonumber\\
&  \times \left (1 +
\int_{0}^t   \left (\sum_{i=1}^2\|A^\frac k2v_i(\eta,\theta_{-t}\omega,v_{i,0})\|^2+|z(\theta_{\eta-t}\omega)|^2\right) \d \eta\right).
\end{align}
Taking the inner product of the system \eqref{2.2} in $H$ by $A^{k-\frac{5}{4}}v$,  by the similar method and \eqref{7.20}, it yields
\begin{align}\label{7.21}
&\frac{\d}{\d t}\|A^{\frac{k}{2}-\frac{5}{8}}v\|^2 +  \nu  \|A^{\frac{k}{2}}v\|^2\nonumber \\
&\leq  C\left ( \|A^{\frac58}v\|^2+ |z(\theta_t\omega)|^2 \right)\|A^{\frac{k}{2}-\frac{5}{8}}v\|^2  +
C \left(1 +|z(\theta_t\omega)|^4 \right).
\end{align}
Applying Gronwall's lemma on \eqref{7.21} gives
\begin{align}\label{7.22}
&\|A^{\frac{k}{2}-\frac{5}{8}}v(t)\|^2
+ \nu \int_0 ^te^{ C\int_s^t(\|A^{\frac{5}{8}}v\|^2+|z(\theta_\tau \omega)|^2) \d \tau}\|A^{\frac{k}{2}} v(s)\|^2  \d s \nonumber\\
&\leq e^{ C\int_0^t(\|A^{\frac{5}{8}}v\|^2+|z(\theta_\tau\omega)|^2) \d \tau}\|A^{\frac{k}{2}-\frac{5}{8}}v( 0)\|^2 \nonumber\\ &+C\int_0^te^{C \int_s^t (\|A^{\frac{5}{8}}v\|^2+ |z (\theta_\tau\omega)|^2) \d \tau} \left (1 +|z(\theta_s\omega)|^4 \right)\d s\nonumber\\
&\leq Ce^{ C\int_0^t(\|A^{\frac{5}{8}}v\|^2+|z(\theta_\tau\omega)|^2) \d \tau} \left (\|A^{\frac{k}{2}-\frac{5}{8}}v( 0)\|^2
+\int_0^t\left (1 +|z(\theta_s\omega)|^4 \right)\d s\right).
\end{align}
Since
\begin{align*}
\int_0^t\left (1 +|z(\theta_s\omega)|^4 \right)\d s\leq e^{\int_0^t\left (1 +|z(\theta_s\omega)|^4 \right)\d s},
\end{align*}
the above estimate \eqref{7.22} is simplified to
\begin{align*}
&\|A^{\frac{k}{2}-\frac{5}{8}}v(t)\|^2
+ \nu \int_0 ^t\|A^{\frac{k}{2}} v(s)\|^2  \d s \nonumber\\
&\leq\|A^{\frac{k}{2}-\frac{5}{8}}v(t)\|^2
+ \nu \int_0 ^te^{ C\int_s^t(\|A^{\frac{5}{8}}v\|^2+|z(\theta_\tau \omega)|^2) \d \tau}\|A^{\frac{k}{2}} v(s)\|^2  \d s \nonumber\\
&\leq Ce^{ C\int_0^t\|A^{\frac{5}{8}}v\|^2\d \tau+C\int_0^t(1+|z(\theta_\tau\omega)|^4) \d \tau} \left (\|A^{\frac{k}{2}-\frac{5}{8}}v( 0)\|^2
+1\right).
\end{align*}
Replacing $\omega$ by $\theta_{-t}\omega$ yields
\begin{align}\label{7.24}
&\|A^{\frac{k}{2}-\frac{5}{8}}v(t,\theta_{-t}\omega,v(0))\|^2
+ \nu \int_0 ^t\|A^{\frac{k}{2}} v(s,\theta_{-t}\omega,v(0))\|^2  \d s \nonumber\\
&\leq Ce^{ C\int_0^t\|A^{\frac{5}{8}}v(\tau,\theta_{-t}\omega,\bar{v}(0))\|^2\d \tau+C\int_0^t(1+|z(\theta_{\tau-t}\omega)|^4) \d \tau} \left (\|A^{\frac{k}{2}-\frac{5}{8}}v( 0)\|^2
+1\right)\nonumber\\
&=Ce^{ C\int_0^t\|A^{\frac{5}{8}}v(\tau,\theta_{-t}\omega,\bar{v}(0))\|^2\d \tau+C\int_{-t}^0(1+|z(\theta_{\tau}\omega)|^4) \d \tau} \left (\|A^{\frac{k}{2}-\frac{5}{8}}v( 0)\|^2
+1\right).
\end{align}
By the above Lemma \ref{lemma4.1}, we have for $ t\geq T_1(\omega)$
\begin{align}\label{7.23}
\int_0^t\|A^{\frac{5}{8}}v(\tau,\theta_{-t}\omega,v(0))\|^2\d \tau \leq
C(e^{-\lambda t}\|v(0)\|^2
 + \zeta_1(\omega) ).
\end{align}
Inserting \eqref{7.23} into \eqref{7.24}, we have for $ t\geq T_1(\omega)$
\begin{align}
&\|A^{\frac{k}{2}-\frac{5}{8}}v(t,\theta_{-t}\omega,v(0))\|^2
+ \nu \int_0 ^t\|A^{\frac{k}{2}} v(s,\theta_{-t}\omega,v(0))\|^2  \d s \nonumber\\
&\leq Ce^{ C\zeta_1(\omega)+C\int_{-t}^0(1+|z(\theta_{\tau}\omega)|^4) \d \tau} \left (\|A^{\frac{k}{2}-\frac{5}{8}}v( 0)\|^2
+1\right):=\zeta_1^*(\omega),
\end{align}
here $\zeta_1^*(\omega)$ is a tempered random variable such that $\zeta_1^*(\omega)>1$.
Moreover, we can deduce that
\begin{align}
 \|A^\frac k2\bar{v}(t,\theta_{-t}\omega,\bar{v}(0))\|^2
&\leq Ce^{C\zeta_{1}(\omega)+C\zeta_1^*(\omega)}\|A^{\frac k2-\frac58}\bar{v}(0)\|^2 \left (1+\zeta_1^*(\omega) +
\int_{-t}^0   |z(\theta_{\eta}\omega)|^2 \d \eta\right)\nonumber\\
&\leq Ce^{C\zeta_{1}(\omega)+C\zeta_1^*(\omega)}\|A^{\frac k2-\frac58}\bar{v}(0)\|^2.
\end{align}
Let $t=T_k(\omega)>T^*=T_{k-1}(\omega)$,   we deduce at $t=T_k(\omega)$
\begin{align}
 \|A^\frac k2\bar{v}(T_k\omega ,\theta_{-T_k }\omega,\bar{v}(0))\|^2
&\leq Ce^{C\zeta_{1}(\omega)+C\zeta_1^*(\omega)} \|A^{\frac k2-\frac58}\bar{v}(0)\|^2. \nonumber
\end{align}
Let
\[
 L_k (\mathfrak B^{k-\frac{5}{4}}, \omega) :=  Ce^{C\zeta_{1}(\omega)+C\zeta_1^*(\omega)} ,\quad \omega\in \Omega,
\]
this completes the proof of Lemma \ref{lemma7.3}.
\end{proof}
In this section, we next introduce the following main result.

\begin{theorem}[Local $(H, H^k)$-Lipschtiz] \label{theorem7.4}
 Let Assumption \ref{assum} hold and $f\in H^{k-\frac{5}{4}}$ for $k\geq\frac{5}{2}$. For   any tempered set $\mathfrak D \in \D_H$,  then there
  exist  random variables $T^*_{{\mathfrak D}} (\cdot)  $   and $ L^*_{\mathfrak D}(\cdot )$ such that   two solutions $v_1$ and $v_2$ of random fractional three-dimensional NS equations \eqref{2.2} corresponding to initial values   $v_{1,0},$ $ v_{2,0}$ in $\mathfrak D \left (\theta_{-T^*_{\mathfrak D}(\omega)}\omega \right)$, respectively, satisfy
  \begin{align} \label{7.26}
  &
  \big\|A^\frac k2(v_1 \!  \big (T^*_{\mathfrak D}(\omega),\theta_{-T^*_{\mathfrak D}(\omega)}\omega,v_{1,0}\big)
  -v_2 \big (T^*_{\mathfrak D}(\omega),\theta_{-T^*_{\mathfrak D}(\omega)}\omega,v_{2,0} \big) \big) \|^2 \nonumber\\
& \leq  L^*_{\mathfrak D} (\omega)\|v_{1,0}-v_{2,0}\|^2,\quad \omega\in \Omega.
\end{align}
\end{theorem}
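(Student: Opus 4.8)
The plan is to derive the local $(H,H^k)$-Lipschitz estimate \eqref{7.26} by splicing together the two intermediate Lipschitz results of this section, namely Theorem \ref{theorem7.2} (the local $(H,H^{k-\frac54})$-Lipschitz continuity, which transports initial data from a general tempered set $\mathfrak D\in\D_H$) and Lemma \ref{lemma7.3} (the local $(H^{k-\frac54},H^k)$-Lipschitz continuity on the higher absorbing set $\mathfrak B^{k-\frac54}$), joined through the cocycle property of $\phi$. This is precisely the analogue, one regularity level higher, of the proof of Theorem \ref{theorem5.6}, in which the local $(H,H^\frac54)$-Lipschitz (Lemma \ref{lem:H1}) and the local $(H^\frac54,H^\frac52)$-Lipschitz (Lemma \ref{lemma5.1}) were composed. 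The exponents match automatically: Lemma \ref{lemma7.3} bounds the $H^k$-norm $\|A^{\frac k2}(\cdot)\|^2$ of the difference by the $H^{k-\frac54}$-norm $\|A^{\frac k2-\frac58}(\cdot)\|^2$ of its incoming datum, and the latter is exactly what Theorem \ref{theorem7.2} controls.

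Writing $\bar v=v_1-v_2$, I would set $T_k:=T_k(\omega)$ (the time from Lemma \ref{lemma7.3}) and $s_\omega:=T_{k-1}(\theta_{-T_k}\omega)$ (the shifted time from Theorem \ref{theorem7.2}), and apply the cocycle identity $\phi(t+s,\omega,\cdot)=\phi(t,\theta_s\omega,\phi(s,\omega,\cdot))$ with $s=s_\omega$, $t=T_k$, and base point $\theta_{-T_k-s_\omega}\omega$, giving
\[
\bar v\big(T_k+s_\omega,\,\theta_{-T_k-s_\omega}\omega,\,\bar v(0)\big)
=\bar v\Big(T_k,\,\theta_{-T_k}\omega,\,\bar v\big(s_\omega,\,\theta_{-T_k-s_\omega}\omega,\,\bar v(0)\big)\Big).
\]
On the inner segment I would invoke Theorem \ref{theorem7.2} to bound the $H^{k-\frac54}$-norm of the evolved difference by $L_{k-1}(\mathfrak D,\theta_{-T_k}\omega)\|v_{1,0}-v_{2,0}\|^2$; on the outer segment I would invoke Lemma \ref{lemma7.3}, whose right-hand side is that same $H^{k-\frac54}$-norm, to upgrade to the $H^k$-norm at the cost of the factor $L_k(\mathfrak B^{k-\frac54},\omega)$. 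Multiplying the two constants yields \eqref{7.26} with
\[
T^*_{\mathfrak D}(\omega):=T_k+s_\omega,\qquad
L^*_{\mathfrak D}(\omega):=L_k(\mathfrak B^{k-\frac54},\omega)\,L_{k-1}(\mathfrak D,\theta_{-T_k}\omega).
\]

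The step requiring the most care is matching the two segments at the interface. Lemma \ref{lemma7.3} applies only to initial data lying in $\mathfrak B^{k-\frac54}(\theta_{-T_k}\omega)$, so I must verify that the intermediate state $v_i\big(s_\omega,\theta_{-T_k-s_\omega}\omega,v_{i,0}\big)$ genuinely enters this higher absorbing set. This is the counterpart of the ``Moreover'' clause of Lemma \ref{lem:H1}, and it should follow from the $H^{k-\frac54}$ absorption encoded in the iterated bound Lemma \ref{lemma7.2} together with the pullback-absorption of $\mathfrak D$ into $\mathfrak B^{k-\frac54}$; if necessary I would simply enlarge $s_\omega$ so that both the Lipschitz estimate and the absorption hold simultaneously, and record this clause explicitly when proving Theorem \ref{theorem7.2}. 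The remaining points, namely the measurability and temperedness of $T^*_{\mathfrak D}$ and $L^*_{\mathfrak D}$, are inherited from those of the building-block times and constants, since composition of tempered random variables with the measure-preserving shifts $\theta_t$ preserves these properties. The genuinely substantive work lies one level down, in the iterative construction of the $H^k$ bounds and the two constituent Lipschitz estimates, all of which are available here as Lemma \ref{lemma7.2}, Theorem \ref{theorem7.2} and Lemma \ref{lemma7.3}.
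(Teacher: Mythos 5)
Your proposal is correct and takes essentially the same route as the paper's proof: it composes Theorem \ref{theorem7.2} with Lemma \ref{lemma7.3} through the cocycle property, arriving at the same decomposition $T^*_{\mathfrak D}(\omega)=T_k+T_{k-1}(\mathfrak D,\theta_{-T_k}\omega)$ and the same constant $L^*_{\mathfrak D}(\omega)=L_k(\mathfrak B^{k-\frac54},\omega)\,L_{k-1}(\mathfrak D,\theta_{-T_k}\omega)$. The interface condition you single out --- that the intermediate state must land in $\mathfrak B^{k-\frac54}(\theta_{-T_k}\omega)$ --- is exactly the ``Moreover'' clause the paper invokes alongside \eqref{7.25}, so your extra care there (recording that clause when proving Theorem \ref{theorem7.2}) matches, and indeed slightly tightens, the paper's treatment.
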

\begin{proof}
By the above Lemma \ref{theorem7.2}, for any tempered set   $\mathfrak D\in \D_H$, then there exist random variables $ T_{k-1}(\mathfrak D ,\cdot )  $ and $L_{k-1}(\mathfrak D,\cdot) $ such that
\be   \label{7.25}
  \left \| A^{\frac{k}{2}-\frac58}\bar v\left (T^* ,\theta_{-T^*}\omega,  \bar v(0)\right)
  \right \|^2
 \leq L_{k-1}({\mathfrak D}, \omega) \|\bar {v} (0)\|^2  ,
\ee
where $T^*= T_{k-1}(\mathfrak D, \omega)$. Moreover, $ v\left (T^* ,\theta_{-T^*}\omega,  v_{0}\right) \in \mathfrak B^{k-\frac{5}{4}}(\omega)$ for any $v_0\in \mathfrak D(\theta_{-T^*}\omega) $, $\omega\in \Omega$. Hence,
 by the above Lemma \ref{lemma7.3} and \eqref{7.25}, we can deduce that
\ben
  \left \| A^\frac k2\bar v \big( T_k +T^*, \theta_{-T_k -T^*}
 \omega, \bar v(0) \big)
   \right \|^2
 &  =  \left \| A^\frac k2\bar v \big( T_k  , \theta_{-T_k }
 \omega, \bar v  ( T^*, \theta_{-T_k-T^*}
 \omega, \bar v(0)  )\big)
   \right \|^2  \\
   &\leq  L_k(\mathfrak B^{k-\frac{5}{4}}, \omega) \left \| A^{\frac{k}{2}-\frac58} \bar v \big  ( T^*, \theta_{-T_k-T^*}
 \omega, \bar v(0)  \big)
   \right \|^2  \\
   &\leq L_k(\mathfrak B^{k-\frac{5}{4}},\omega)  L_{k-1} \big (\mathfrak D, \theta_{-T_k}\omega \big) \|\bar v(0)\|^2 ,
\ee
here $T^*=T^*(\theta_{-T_k}  \omega)$,
uniformly for $v_{1,0}$, $ v_{2,0}\in \B^{k-\frac{5}{4}}(\theta_{-T_k-T^*(\theta_{-T_k} \omega)}\omega)$, $  \omega\in \Omega$. Let the random variables defined by
\ben
  & T_{\mathfrak D}^*( \omega) := T_k +T^*(\theta_{-T_k} \omega) , \\
  &L_{\mathfrak D}^*(\omega) :=  L_k(\mathfrak B^{k-1},\omega)   L_{k-1} \big (\mathfrak D, \theta_{-T_k}\omega \big),
\ee
this completes the proof of Theorem \ref{theorem7.4}.
  \end{proof}

\subsection{The $(H,H^k)$-random attractor}

\begin{theorem}\label{theorem7.1}
 Let $k\geq\frac{5}{2}$. Let Assumption \ref{assum1} hold and $f\in H^{k-\frac{5}{4}}$. The  RDS $\phi$ generated by the random fractional three-dimensional NS equations  \eqref{2.2} has  a tempered $(H,H^k)$-random attractor $\mathcal A $. Moreover, $\mathcal A$ has a finite fractal dimension in $H^k$: there exists    $ d>0$ such that
\[
d_f^{H^k} \! \big ( \A(\omega) \big)  \leq d, \quad \omega \in \Omega.
\]
\end{theorem}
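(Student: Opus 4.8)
The plan is to mirror the assembly already carried out for the $(H,H^\frac52)$ case in Theorem \ref{theorem5.1}, now feeding in the $H^k$-level estimates of Lemma \ref{lemma7.2} and Theorem \ref{theorem7.4} and applying the bi-spatial criterion Lemma \ref{lem:cui18} with $X=H$ and $Y=H^k$. First I would verify hypothesis (i): by the $H^k$ bound of Lemma \ref{lemma7.2}, the random set $\mathfrak B^k$ pullback-absorbs every tempered set in $\D_H$ under the $H^k$-norm, and since $\zeta_k$ is a tempered random variable with $\zeta_k(\omega)>1$, the set $\mathfrak B^k$ is tempered and closed (indeed bounded) in $H^k$. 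Thus $\mathfrak B^k$ serves as the required random absorbing set in $Y=H^k$.

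The conceptual crux of the assembly is to derive the $(H,H^k)$-asymptotic compactness from the local $(H,H^k)$-Lipschitz continuity \eqref{7.26}. Given $\mathfrak D\in\D_H$, $t_n\to\infty$ and $x_n\in\mathfrak D(\theta_{-t_n}\omega)$, I would write $t_n=T^*_{\mathfrak D}(\omega)+s_n$ with $s_n\to\infty$ and split the cocycle as
\[
\phi(t_n,\theta_{-t_n}\omega,x_n)=\phi\big(T^*_{\mathfrak D}(\omega),\theta_{-T^*_{\mathfrak D}(\omega)}\omega,y_n\big),\qquad y_n:=\phi(s_n,\theta_{-t_n}\omega,x_n).
\]
Because $\phi$ already admits a random attractor in $H$ by Theorem \ref{theorem4.6}, the inner orbit $\{y_n\}$ is precompact, hence Cauchy along a subsequence, in $H$; estimate \eqref{7.26} then gives
\[
\big\|A^\frac k2\big(\phi(T^*_{\mathfrak D}(\omega),\theta_{-T^*_{\mathfrak D}(\omega)}\omega,y_n)-\phi(T^*_{\mathfrak D}(\omega),\theta_{-T^*_{\mathfrak D}(\omega)}\omega,y_m)\big)\big\|^2\leq L^*_{\mathfrak D}(\omega)\|y_n-y_m\|^2\to0,
\]
so the images are Cauchy, hence convergent, in $H^k$. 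This verifies hypothesis (ii) of Lemma \ref{lem:cui18}, which then delivers the tempered $(H,H^k)$-random attractor $\mathcal A$; since $H^k\hookrightarrow H$ continuously, the $(H,H^k)$-attractor is in particular a random attractor in $H$, so by uniqueness it coincides with the attractor of Theorem \ref{theorem4.6}.

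Finally, I would obtain the finite fractal dimension in $H^k$ by transferring the finite $H$-dimension of $\mathcal A$ supplied by Theorem \ref{theorem4.6}. Using invariance, $\mathcal A(\omega)=\phi(T^*_{\mathfrak D}(\omega),\theta_{-T^*_{\mathfrak D}(\omega)}\omega,\mathcal A(\theta_{-T^*_{\mathfrak D}(\omega)}\omega))$, so $\mathcal A(\omega)$ is the image of a set of finite $H$-fractal dimension under a map that is $(H,H^k)$-Lipschitz by \eqref{7.26}; since such a Lipschitz map cannot increase fractal dimension, Lemma 5 in \cite{cui} yields $d_f^{H^k}(\mathcal A(\omega))\leq d$ for the same bound $d$ as in $H$. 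The only genuine difficulty in this program lies upstream, in the iterative $H^k$ bound of Lemma \ref{lemma7.2} and the step-by-step $(H,H^k)$-Lipschitz estimate of Theorem \ref{theorem7.4}, where each regularity level must be bootstrapped from the previous one using the assumptions $f\in H^{k-\frac54}$ and $h\in H^{k+\frac54}$; once those are in hand, the present theorem follows as a direct application of the bi-spatial attractor machinery.
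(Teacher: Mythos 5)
Your proposal is correct and follows essentially the same route as the paper: the $H^k$ absorbing set from Lemma \ref{lemma7.2} supplies hypothesis (i) of Lemma \ref{lem:cui18}, the $(H,H^k)$-smoothing property \eqref{7.26} yields the $(H,H^k)$-asymptotic compactness for hypothesis (ii), and the finite fractal dimension in $H^k$ is transferred from the finite $H$-dimension of Theorem \ref{theorem4.6} via Lemma 5 in \cite{cui} together with \eqref{7.26}. Your explicit splitting argument $t_n=T^*_{\mathfrak D}(\omega)+s_n$ merely spells out the Cauchy-subsequence step that the paper's proof leaves implicit.
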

\begin{proof}
  Let $k\geq\frac{5}{2}$. In Lemma \ref{lemma7.2},  we have proved an $H^k$ random absorbing set $\mathfrak B_{H^k}$, which is tempered and closed in $H^k$.  Then the  $(H,H^k)$-smoothing property \eqref{7.26}   implies that the $(H,H^k)$-asymptotic compactness of $\phi$. Hence  by the above Lemma \ref{lem:cui18}, we show that $\A$ is an $(H,H^k)$-random attractor of $\phi$. By the above Theorem \ref{theorem4.6}, we prove  the fractal dimension in $H$ of  $\A$ is finite. By the Lemma 5 in \cite{cui} and the $(H,H^k)$-smoothing property \eqref{7.26}, we show the   finite-dimensionality  in $H^k$. This completes the proof of Theorem \ref{theorem7.1}.
\end{proof}

 \begin{remark}
 The local $(H,H^k)$-Lipschitz continuity \eqref{7.26}  and the finite fractal dimension in $H^k$ of the global attractor    are new even for deterministic fractional three-dimensional NS equations.
 \end{remark}

\section*{Acknowledgements}
  H. Liu was supported by the National Natural Science Foundation of China (No. 12271293), Natural Science Foundation of Shandong Province (No.  ZR2024MA069) and the project of Youth Innovation Team of Universities of Shandong Province (No. 2023KJ204). C.F. Sun was supported by the the National Natural Science Foundation of China (No. 11701269) and Natural Science Foundation of Jiangsu Province (No.  BK20231301). J. Xin was  supported  by the Natural Science Foundation of Shandong Province (No. ZR2023MA002).\\

\section*{Declarations}

\noindent{\bf Conflict of interest} \\

\noindent The authors declare that they have no conflict of interest.


\end{document}